% \Res_{0,1} benötigt?

\documentclass[a4paper,10pt]{elsarticle}
\usepackage[utf8x]{inputenc}

\usepackage{amsmath,amssymb,amsthm}
\usepackage{enumerate}
\usepackage{fca}

\theoremstyle{definition}
\newtheorem{defn}{Definition}[section]

\newtheorem{remark}[defn]{Remark}

\theoremstyle{plain}
\newtheorem{theorem}[defn]{Theorem}
\newtheorem{proposition}[defn]{Proposition}
\newtheorem{corollary}[defn]{Corollary}
\newtheorem{lemma}[defn]{Lemma}
\newtheorem{construction}[defn]{Construction}
\newtheorem{conjecture}[defn]{Conjecture}

 \DeclareMathOperator{\Aut}{Aut}
\DeclareMathOperator{\CoAt}{CoAt} 
\DeclareMathOperator{\End}{End} \DeclareMathOperator{\id}{id}
 \DeclareMathOperator{\JM}{JM}
\DeclareMathOperator{\Mat}{Mat} \DeclareMathOperator{\Min}{Min}
\DeclareMathOperator{\Res}{Res} \DeclareMathOperator{\Rd}{Rd}

\newcommand{\bK}{\mathbf{K}}
\newcommand{\bL}{\mathbf{L}}

\newcommand{\bS}{\mathbf{S}}

\newcommand{\cupdot}{\ensuremath{\mathaccent\cdot\cup}}

\hyphenation{se-mi-mo-du-le}

\begin{document}

\title{Finite simple additively idempotent semirings\tnoteref{t1}}
\tnotetext[t1]{This work has been supported by Science Foundation
  Ireland under grant no.\ 08/IN.1/I1950.}

\author{Andreas Kendziorra}
\ead{Andreas.Kendziorra@ucdconnect.ie}

\author{Jens Zumbr\"agel}
\ead{Jens.Zumbragel@ucd.ie} 

\address{Claude Shannon Institute\\
  University College Dublin\\
  Belfield, Dublin 4, Ireland}

\date{}

\begin{abstract}
  Since for the classification of finite (congruence-)simple semirings
  it remains to classify the additively idempotent semirings, we
  progress on the characterization of finite simple additively
  idempotent semirings as semirings of join-morphisms of a
  semilattice.  We succeed in doing this for many cases, amongst
  others for every semiring of this kind with an additively neutral
  element. As a consequence we complete the classification of finite
  simple semirings with an additively neutral element.  To complete
  the classification of all finite simple semirings it remains to
  classify some very specific semirings, which will be discussed here.
  Our results employ the theory of idempotent irreducible semimodules,
  which we develop further.
\end{abstract} 

\begin{keyword}
  Semirings\sep  Semimodules\sep Semilattices\sep Join-morphisms
  
  \MSC[2010] 16Y60\sep 06A12 
\end{keyword}

\maketitle

%------------------------------------------

\section{Introduction}

There have been several studies on simple semirings, e.g., in
\cite{bashir_2001, bashir, jezek, jezek2, mitchell, monico, zum}.
Amongst other things a complete classification of finite commutative
simple semirings was presented in \cite{bashir_2001}.  But there
exists so far no classification of all finite simple semirings.
Monico showed in \cite{monico} that every proper finite simple
semiring with more than two elements and nontrivial addition is
additively idempotent.  Thereupon additively idempotent semirings have
been studied in \cite{jezek, jezek2, zum}.

In this work we aim to describe all finite simple additively
idempotent semirings.  We did not succeed to characterize all these
semirings, but our approach covers many cases.  Before we go into more
detail we state the most important definitions.

\begin{defn}
  Let $R$ be a nonempty set and $+$ and $\cdot$ two binary operations
  on $R$.  Then $(R,+,\cdot)$ is called a \textit{semiring} if $(R,+)$
  is a commutative semigroup, $(R,\cdot)$ is a semigroup, and both
  distributive laws $r\cdot(s+t)=r\cdot s+r\cdot t$ and $(r+s)\cdot
  t=r\cdot t+ s\cdot t$ hold for all $r,s,t\in R$.
\end{defn}

If $(R,+,\cdot)$ is a semiring, then we write for the multiplication
mostly $xy:=x\cdot y$ for $x,y\in R$.

\begin{defn}
  Let $(R,+,\cdot)$ be a semiring and $r\in R$. We call $r$
  \textit{right} (\textit{left}) \textit{absorbing} if it is
  multiplicatively right (left) absorbing, i.e., if $sr=r$ ($rs=r$)
  holds for every $s\in R$. If $r$ is left and right absorbing then it
  is called \textit{absorbing}. If $(R,+)$ has a neutral element which
  is absorbing, then it is called the \textit{zero} of the semiring
  $(R,+,\cdot)$.
\end{defn}

\begin{defn}
  A \textit{congruence} of a semiring $(R,+,\cdot)$ is an equivalence
  relation $\sim $ on $R$ such that for every $r,s,t \in R$:
  \begin{displaymath}
    r\sim s\ \text{ implies }\ 
    r+t\sim s+t,\ tr\sim ts, \text{ and } rt\sim st \:.
  \end{displaymath} 
  The semiring $(R,+,\cdot)$ is called \textit{simple} if its only
  congruences are $\id_R$ and $R\times R$.
\end{defn}

To present the main result from \cite{monico}, we need the following
theorem about simple semigroups, which can be found in
\cite[Theorem~3.7.1]{howie}.

\begin{theorem}\label{simple_semigroups}
  Let $I=\{1,2,\dots,m\},\ J=\{1,2,\dots,n\}$, and $P=(p_{ij})$ an
  $n\times m$ matrix with $p_{i,j}\in\{0,1\}$ for all $i,j$ such that
  no row or column is identically zero, no two rows are identical, and
  no two columns are identical.  Let $S=(I\times J)\cup \{\infty\}$ and
  define a binary operation on $S$ by
  \begin{displaymath}
    (i,j)\cdot(k,l) := \begin{cases}
      (i,l) & \text{if }p_{jk} = 1 \:,\\
      \infty & \text{else} \:,
    \end{cases}
    \quad (i,j)\cdot \infty := \infty \cdot (i,j)
    := \infty\cdot\infty := \infty \:.
  \end{displaymath}
  Then $(S,\cdot)$ is a simple semigroup of order $mn+1$. Conversely,
  every finite simple semigroup with an absorbing element is
  isomorphic to one of this kind.
\end{theorem}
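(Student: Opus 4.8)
The statement splits into a \emph{direct} part --- that the displayed operation makes $S$ a simple semigroup of order $mn+1$ --- and a \emph{converse} part, which is in essence the Rees structure theorem for completely $0$-simple semigroups together with a description of their congruences. I would prove the direct part from scratch, and for the converse invoke that structure theory (as packaged in \cite[Theorem~3.7.1]{howie} and the results behind it) and then read off the asserted shape from congruence-simplicity.

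\emph{Direct part.} The order is $|I\times J|+1=mn+1$. Associativity is a finite case check: any product involving $\infty$ equals $\infty$ by definition, and for $(i,j),(k,l),(p,q)$ one verifies directly that $\bigl((i,j)(k,l)\bigr)(p,q)$ and $(i,j)\bigl((k,l)(p,q)\bigr)$ both equal $(i,q)$ if $p_{jk}=p_{lp}=1$ and both equal $\infty$ otherwise. For simplicity, let $\sim$ be a congruence with $\sim\,\neq\id_S$. If $\infty\sim(i,j)$ for some $(i,j)$, then for any $(k,l)$ I pick $b$ with $p_{bi}=1$ (possible since no column of $P$ is zero), giving $(k,j)=(k,b)(i,j)\sim(k,b)\infty=\infty$, and then $c$ with $p_{jc}=1$ (no row of $P$ is zero), giving $(k,l)=(k,j)(c,l)\sim\infty(c,l)=\infty$; hence $\sim\,=S\times S$. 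Otherwise $\{\infty\}$ is a $\sim$-class by itself; taking $(i,j)\sim(k,l)$ with $(i,j)\neq(k,l)$ and multiplying on the left by an arbitrary $(a,b)$, the fact that a product equal to $\infty$ cannot lie in the class of a non-$\infty$ element forces $p_{bi}=1\iff p_{bk}=1$ for all $b$, so columns $i,k$ of $P$ agree and $i=k$; right multiplication likewise forces rows $j,l$ to agree and $j=l$, contradicting $(i,j)\neq(k,l)$. Hence the only congruences are $\id_S$ and $S\times S$.

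\emph{Converse part.} Let $S$ be a finite simple semigroup with absorbing element $0$. Collapsing an ideal of $S$ onto $0$ is a (Rees) congruence, so simplicity leaves $\{0\}$ and $S$ as the only ideals; and if $S^2=\{0\}$ then every equivalence relation on $S$ is a congruence, forcing $|S|\leq2$, which is checked directly. So assume $S^2\neq\{0\}$; then $S^2=S$, so $S$ is $0$-simple and, being finite, completely $0$-simple, and by the Rees theorem $S\cong M^0(G;I,\Lambda;P)$ for a group $G$ and a regular sandwich matrix $P$ over $G\cup\{0\}$ (``regular'' meaning that $P$ has no zero row or column). Simplicity applied once more rules out $|G|\geq2$: the normal subgroup $G\trianglelefteq G$ induces a congruence collapsing each set $\{(i,g,\lambda):g\in G\}$ to a point while keeping $0$ and distinct such sets apart, and this congruence is proper and, whenever $|G|\geq2$, nontrivial. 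Hence $G$ is trivial; relabelling $\Lambda$ as $J=\{1,\dots,n\}$ and normalising $P$ to have entries in $\{0,1\}$ turns the multiplication into the one defined in the theorem. Finally, if two columns (resp.\ rows) of $P$ were equal, identifying the two corresponding indices of $I$ (resp.\ of $\Lambda$) and fixing everything else would be a proper nontrivial congruence; so $P$ has pairwise distinct rows and columns, giving exactly the claimed form.

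The main obstacle is the converse, specifically its structural input: that a finite $0$-simple semigroup is completely $0$-simple, hence a Rees matrix semigroup $M^0(G;I,\Lambda;P)$ with regular $P$, and that its congruence structure is transparent enough to force, via congruence-simplicity, that $G$ is trivial. These are exactly the facts supplied by \cite[Theorem~3.7.1]{howie} and by Rees's theorem; granting them, the rest --- the elementary arguments above, plus reading off that $P$ has distinct rows and columns --- is routine.
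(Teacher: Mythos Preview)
The paper does not prove this statement; it is quoted from \cite[Theorem~3.7.1]{howie}. Your proposal supplies the standard argument found there: verify associativity and congruence-freeness of the Rees construction directly, and for the converse pass from congruence-simplicity to $0$-simplicity, invoke the Rees theorem for finite (hence completely) $0$-simple semigroups, and then use congruence-simplicity again to force the structure group trivial and the sandwich matrix to have pairwise distinct rows and columns. Both halves are carried out correctly.

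One point to make explicit: ``forcing $|S|\le2$, which is checked directly'' does not quite close the case of the $2$-element null semigroup $\{0,a\}$ with $a^2=0$. It is congruence-free and has absorbing element $0$, yet it is not of the displayed form, since for $m=n=1$ the only admissible matrix is $P=(1)$, giving $(1,1)^2=(1,1)\ne\infty$. This is a minor imprecision in the converse as restated in the paper rather than a defect in your method; the usual formulation excludes $|S|\le2$, and in the paper's application of the result (via Monico's theorem) the case $|R|\le2$ is already separated out.
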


The main result in \cite{monico} is the following:

\begin{theorem}
  Let $(R,+,\cdot)$ be a finite simple semiring.  Then one of the
  following holds:
  \begin{enumerate}
  \item $|R|\le 2$,
  \item $(R,+,\cdot)\cong(\Mat_n(\mathbb{F}_q),+,\cdot)$ for some
    finite field $\mathbb{F}_q$ and some $n\ge 1$,
  \item $(R,+,\cdot)$ is a zero multiplication ring of prime order,
  \item $(R,\cdot)$ is a semigroup as in
    Theorem~\ref{simple_semigroups} with absorbing element $\infty\in
    R$ and $R+R=\{\infty\}$,
  \item $(R,+)$ is idempotent\label{additively_idempotent}.
\end{enumerate}
\end{theorem}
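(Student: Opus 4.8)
We may assume $|R|\ge 3$, the case $|R|\le 2$ being the first possibility listed. My plan is to exploit simplicity by producing a short chain of semiring endomorphisms and ``ideal-type'' congruences of $R$, each of which must collapse either to $\id_R$ or to $R\times R$; the two alternatives will then peel off the remaining cases one at a time.

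First I would use finiteness to fix $N\ge 2$ with the $N$-fold sum $Nx:=x+\dots+x$ additively idempotent for all $x\in R$, so that $N(Nx)=Nx$ for every $x$, and then check, using both distributive laws, that $f\colon x\mapsto Nx$ lies in $\End(R)$ (multiplicativity comes from $(Nx)(Ny)=N^2(xy)=N(N(xy))=N(xy)$) and that $f\circ f=f$. Its kernel $\ker f=\{(x,y):Nx=Ny\}$ is a congruence, so by simplicity either $\ker f=\id_R$ --- then $f$ is a bijective idempotent self-map, hence $f=\id_R$, so $Nx=x$, hence $x+x=x$ for all $x$, which is case~\ref{additively_idempotent} --- or $\ker f=R\times R$, so $f$ is constant, say $f\equiv o$; in that case $o$ is the unique additive idempotent of $R$, one has $Nx=o$ for all $x$, and (writing $o=Nx'$) $o$ is multiplicatively absorbing. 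Assuming $\ker f=R\times R$, I would next look at $g\colon x\mapsto x+o$, again an endomorphism with $g\circ g=g$. If $\ker g=\id_R$ then $g=\id_R$ as above, so $o$ is additively neutral; moreover $x+(N-1)x=Nx=o$, so every element has an additive inverse and $(R,+)$ is a finite abelian group, making $(R,+,\cdot)$ a finite ring whose semiring congruences are exactly its ring ideals (the $0$-class being a two-sided ideal, and conversely) --- a \emph{simple} ring. Since $R^2$ is a two-sided ideal, $R^2=0$ or $R^2=R$: in the first case the ideals are the subgroups of $(R,+)$, so $|R|$ is prime and $R$ is a zero-multiplication ring of that order; in the second case the Jacobson radical of $R$ is nilpotent and is an ideal, so it is $0$ (it cannot be $R$, since $R^2=R\ne 0$), whence $R$ is semisimple and Artin--Wedderburn together with Wedderburn's little theorem give $R\cong\Mat_n(\mathbb{F}_q)$. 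If instead $\ker g=R\times R$ then $g(o)=o$ forces $x+o=o$ for all $x$, i.e.\ $o$ is an additive \emph{zero}.

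It then remains to handle the case that $o$ is an additive zero. Here $R+R$ is closed under addition and under left and right multiplication by elements of $R$, so I would form the congruence $\rho:=\{(x,y):x=y\ \text{or}\ x,y\in R+R\}$. If $\rho=\id_R$ then $R+R$ is a singleton, necessarily $\{o\}$, so $+$-compatibility becomes vacuous and the semiring congruences of $R$ are just the semigroup congruences of $(R,\cdot)$; hence $(R,\cdot)$ is finite and simple with the absorbing element $o$, so by Theorem~\ref{simple_semigroups} it has the form described there, and with $R+R=\{o\}$ this is the fourth possibility. If $\rho=R\times R$ then $R+R=R$, which I would rule out: given $x\in R$, repeatedly writing an element as a sum of two gives a sequence $x=c_0,c_1,c_2,\dots$ with $c_i=c_{i+1}+r_{i+1}$, $r_{i+1}\in R$; by finiteness some value recurs, $c_i=c_j$ with $i<j$, and telescoping yields $c_i=c_i+u$ for $u:=r_{i+1}+\dots+r_j\in R$; adding $u$ repeatedly gives $c_i=c_i+Nu=c_i+o=o$, and since $x=c_0=c_i+(r_1+\dots+r_i)$ (empty sum when $i=0$) we get $x=o$, so $R=\{o\}$, contradicting $|R|\ge 3$.

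Most of this is routine bookkeeping with the two distributive laws to see that $f$, $g$, and $\rho$ behave as stated. The one genuinely heavy input is the classification of finite simple rings used above --- essentially Artin--Wedderburn plus Wedderburn's little theorem --- which is where the matrix-ring and prime-order cases really come from; among the elementary steps, I expect the recurrence argument (an additive-zero semiring with $R+R=R$ is trivial) to need the most care.
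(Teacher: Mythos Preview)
The paper does not prove this theorem at all: it is quoted verbatim as ``the main result in~\cite{monico}'' and used as the starting point for the paper's own classification programme, so there is no in-paper argument to compare against.

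Your argument is correct and is essentially the natural route (and, as far as I can tell, close in spirit to Monico's original). The two idempotent endomorphisms $x\mapsto Nx$ and $x\mapsto x+o$, each forcing a dichotomy via simplicity, cleanly separate the idempotent case, the ring case, and the ``$o$ is additively absorbing'' case; the Rees-type congruence collapsing $R+R$ then isolates the constant-addition semigroup case, and your recurrence argument ruling out $R+R=R$ when $o$ is additively absorbing is tidy (the key point being that $c_i=c_i+u$ iterates to $c_i=c_i+Nu=c_i+o=o$).

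Two small remarks. First, in the ring case you invoke Artin--Wedderburn for a finite ring that is not \emph{a~priori} unital; the fact you need---that a simple Artinian ring $R$ with $R^2=R$ automatically has an identity and is a full matrix ring over a division ring---is true, but since the textbook formulation usually assumes a unit it is worth saying so explicitly. Second, your passage ``$Nx=x$, hence $x+x=x$'' is correct but terse: the implication uses that $Nx$ was chosen idempotent, so $Nx=x$ makes $x$ itself idempotent. Neither point is a gap.
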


Of course every semiring in the first four cases is simple but not
every additively idempotent semiring is simple.  Hence, if one wants
to classify all finite simple semirings, then it remains to describe
all finite simple additively idempotent semirings.  The case, where
such a semiring has a zero, was studied in \cite{zum}.  We need some
preparation to state the main result of it.

A \textit{lattice} $\bL=(L,\leq)$ is an ordered set where for every
two elements $x,y\in L$ the supremum $x\vee y$ and the infimum
$x\wedge y$ in $L$ exists.  The lattice $\bL$ is called
\textit{complete} if for every subset $X\subseteq L$ the supremum
$\bigvee X$ and the infimum $\bigwedge X$ in $L$ exists.  A complete
lattice has a greatest element $1_\bL$ and a least element $0_\bL$.
If there is no confusion, then we write just $1$ and $0$.  A mapping
$f:L\rightarrow K$ between two lattices $\bL=(L,\leq)$ and
$\bK=(K,\leq)$ is called \textit{join-morphism} if $f(x\vee
y)=f(x)\vee f(y)$ holds for every $x,y\in L$.  By $\JM(\bL)$ we denote
the set of all join-morphisms from $\bL$ to $\bL$.  If $\bL$ and $\bK$
are complete and $f$ fulfills $f(\bigvee X)=\bigvee f(X)$ for every
subset $X\subseteq L$, then $f$ is called \textit{residuated} (or
\textit{complete join-morphism}).  If $\bL$ and $\bK$ are finite, then
$f$ is residuated iff it is a join-morphism and fulfills
$f(0_\bL)=0_\bK$.  By $\Res(\bL)$ we denote the set of all residuated
mappings from $\bL$ to $\bL$. The structure $(\Res(\bL),\vee,\circ)$,
where $\vee$ denotes the pointwise supremum and $\circ$ the
composition of two mappings, is a semiring for a complete lattice
$\bL$. For $a,b\in L$ define the mapping $e_{a,b}\in\Res(\bL)$ by
\begin{displaymath}
  e_{a,b}:L\rightarrow L \:,\quad x\mapsto \begin{cases}
    0\quad & \text{if }x\leq a \:,\\
    b&\text{else} \:.
  \end{cases}
\end{displaymath}
More information about lattices can be found in \cite{birkhoff} and
about residuated mappings in \cite{janowitz}.  The main result from
\cite{zum} can be stated as follows:

\begin{theorem}\label{theorem_zum}
  Let $\bL=(L,\leq)$ be a finite lattice and $(R,\vee, \circ)$ a
  subsemiring of $(\Res(\bL),\vee, \circ)$ such that $e_{a,b}\in R$
  for every $a,b\in L$. Then $(R,\vee,\circ)$ is a finite simple
  additively idempotent semiring with zero.  Conversely, every finite
  simple additively idempotent semiring $(S,+,\cdot)$ with $|S|>2$ and
  a zero is isomorphic to such a semiring.
\end{theorem}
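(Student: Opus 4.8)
The plan is to handle the two implications separately, the forward one being routine and the converse carrying the weight. For the forward direction, let $R\le\Res(\bL)$ contain every $e_{a,b}$. Then $R$ is finite because $\Res(\bL)$ is, $(R,\vee)$ is idempotent, and the constant map $x\mapsto 0_\bL$ — which equals $e_{1_\bL,b}$ for any $b$ — is an additive neutral element that is also absorbing for $\circ$, hence a zero. For simplicity, take a congruence ${\sim}\neq\id_R$ and choose $f\sim g$ with $f\neq g$; then there is $p$ with $f(p)\neq g(p)$, and after possibly swapping $f$ and $g$ we may assume $f(p)\not\le g(p)$, so that $b:=f(p)\neq 0_\bL$. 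Composing the pair $(f,g)$ on the left with $e_{g(p),b}$ and on the right with $e_{0_\bL,p}$ yields $e_{0_\bL,b}\sim 0_R$; composing on the left with $e_{0_\bL,c}$ then gives $e_{0_\bL,c}\sim 0_R$ for every $c\in\bL$, in particular $e_{0_\bL,1_\bL}\sim 0_R$; and since $r\vee e_{0_\bL,1_\bL}=e_{0_\bL,1_\bL}$ for every $r\in R$ (because $r(0_\bL)=0_\bL$ and $r(x)\vee 1_\bL=1_\bL$ for $x\neq 0_\bL$), we get $r\sim 0_R$ for all $r$, i.e.\ ${\sim}=R\times R$. Hence $R$ is simple.

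For the converse, let $(S,+,\cdot)$ be finite, simple, additively idempotent, with zero $0$ and $|S|>2$. Since $(S,+)$ is a finite commutative idempotent monoid it becomes a finite lattice $\bL$ under $x\le y\iff x+y=y$, with $x\vee y=x+y$, least element $0$, and greatest element $1_\bL=\sum_{x\in S}x$. The left regular action $\lambda\colon S\to\Res(\bL)$, $\lambda_s(x)=sx$, is a semiring homomorphism: left distributivity makes $\lambda_s$ a join-morphism and $s\cdot 0=0$ makes it residuated; right distributivity gives $\lambda_{s+t}=\lambda_s\vee\lambda_t$; associativity gives $\lambda_{st}=\lambda_s\circ\lambda_t$. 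Its kernel is a semiring congruence of $S$, hence $\id_S$ or $S\times S$; the latter is impossible, since it would force $sx=0\cdot x=0$ for all $s,x$, i.e.\ zero multiplication, and then every congruence of the semilattice $(S,+)$ would be a semiring congruence of $S$ — yet for any $a$ with $0\neq a\neq 1_\bL$ (which exists as $|S|>2$) the kernel of $x\mapsto x\vee a$ is a proper nontrivial semilattice congruence, contradicting simplicity. So $\lambda$ is injective and $R:=\lambda(S)$ is a subsemiring of $\Res(\bL)$ isomorphic to $S$. (In the fine-tuning of this step one replaces the regular semimodule ${}_S S$ by a suitably chosen faithful \emph{idempotent irreducible} $S$-semimodule $M$ and takes $\bL=(M,+)$; constructing such an $M$ and controlling its congruences is exactly where the semimodule theory mentioned in the abstract enters.)

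The remaining task — and the main obstacle — is to show $R$ contains $e_{a,b}$ for all $a,b\in\bL$. For $a=1_\bL$ this is the zero of $\Res(\bL)$, so fix $a\neq 1_\bL$ and $b\in\bL$; we must find $s\in S$ with $\lambda_s=e_{a,b}$, i.e.\ $sj=0$ for every join-irreducible $j\le a$ and $sj=b$ for every join-irreducible $j\not\le a$. I would prove this by a semiring density argument: the natural relations ``agree on $\{x:x\not\le a\}$'' and ``collapse $\{x:x\le a\}$'' are only one-sided congruences of $S$, not semiring congruences, so $e_{a,b}$ cannot be read off directly from simplicity; instead one uses the structure of idempotent irreducible semimodules to show that the action of $S$ on $M$ is dense enough that the finite system above is solvable for every admissible pair $(a,b)$ — the semiring analogue of Jacobson's density theorem. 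Converting congruence-simplicity of $S$ into this ``every rank-one map $e_{a,b}$ is realised'' statement is, I expect, the crux; once it is in place, $R$ is a subsemiring of $\Res(\bL)$ containing all $e_{a,b}$ and isomorphic to $S$, completing the proof.
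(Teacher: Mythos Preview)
The paper does not prove this theorem; it is quoted as the main result of~\cite{zum} and stated without proof. So there is no in-paper proof to compare against directly, but the paper's machinery for the parallel cases (Theorems~\ref{main_right_but_not_left}--\ref{main_absorbing}) shows what the argument should look like, and against that your proposal has a genuine gap.

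Your forward direction is correct and standard.

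For the converse, two problems. First, embedding via the left regular representation $\bL=(S,\le)$ is not ``fine-tuning'' to be adjusted later: on this lattice the image $\lambda(S)$ will typically \emph{not} contain all $e_{a,b}$, because $S$ as an $S$-semimodule need not be irreducible. Passing to a faithful semimodule of smallest cardinality --- and proving it is irreducible, as in Proposition~\ref{prop_M_is_irreducible} --- is where the work begins, not a cosmetic replacement. You note this in a parenthesis but then carry on as if $\bL=S$ were the object on which density is to be proved.

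Second, and more seriously, the density step is the whole theorem and you only gesture at it (``I would prove\ldots'', ``I expect''). Your diagnosis that the obvious relations are only one-sided congruences on $S$ is correct, but you stop there. The mechanism that actually works, visible in the paper's analogous Propositions~\ref{prop_r_a0} and~\ref{prop_r_au} via Proposition~\ref{prop_uv}, is to build a congruence on the irreducible \emph{semimodule} $M$ rather than on $S$: one takes the transitive hull of the relation $a\,\rho\,b \Leftrightarrow \forall r\in R:\{ra,rb\}\neq\{u,v\}$ for suitable $u,v\in M$, checks it is a semimodule congruence, and uses quotient-irreducibility of $M$ to force it to be $\id_M$. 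This yields, for any $b\not\le a$, an $r$ with $ra=u$, $rb=v$; summing such $r$'s then realises the required $e_{a,b}$-type map. None of this appears in your sketch, and without it the converse is not proved.
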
 

We use here the same approach as in \cite{zum}, i.e., we try to
characterize every finite simple additively idempotent semiring as a
semiring of join-morphisms of a semilattice.  For this we have to
distinguish between several cases.  By the \emph{greatest element} of
a finite additively idempotent semiring $(R,+,\cdot)$ or a finite
commutative semigroup $(R,+)$ we mean the greatest element
of~$(R,\le)$, where the order $\le$ on $R$ is defined by $x\le
y\,:\Leftrightarrow\,x+y=y$ for $x,y\in R$; this element is
$\sum_{r\in R}r$.  We consider the cases where the greatest element of
a semiring is
\begin{enumerate}
\item neither right nor left absorbing,
\item right but not left absorbing,
\item left but not right absorbing,
\item absorbing and the semiring possesses a finite idempotent
  irreducible semimodule (see Section~\ref{chapter_semimodules} for
  definitions) which
  \begin{enumerate}
  \item satisfies property $(*)$ below,
  \item does not satisfy property $(*)$.
    \label{case_remaining} 
  \end{enumerate}
\end{enumerate}

For a finite idempotent semimodule $(M,+)$ with greatest element $\infty_M$
we consider the property:
\begin{displaymath}\tag{$*$}
  \exists\,u\in M\ \
  \forall\,x\in M\!\setminus\!\{\infty_M\}:\
  \infty_M\ne u+x \:.
\end{displaymath}
This property is satisfied, e.g., if $(M,+)$ has a neutral element or
if $\infty_M$ is \emph{join-irreducible}, i.e., if $\infty_M\ne u+x$
for all $u,x\in M\!\setminus\!\{\infty_M\}$.

We succeed with this approach for every case, except
Case~\ref{case_remaining}, for which we have a conjecture.  As
semirings in Case~\ref{case_remaining} have no additively neutral
element we complete with our characterization theorems the
classification of finite simple semirings with additively neutral
element, which will be summarized in
Theorem~\ref{classification_add_neutr}.

This paper is structured as follows.
Section~\ref{chapter_semimodules} contains a comprehensive study of
semimodules, especially idempotent irreducible semimodules.  These
semimodules are necessary to describe the embedding of a finite simple
additively idempotent semiring into the semiring of join-morphisms of
a semilattice, which is done in Section~\ref{section_embedding}.  In
Section~\ref{chapter_subsemirings} we study simple subsemirings of a
semiring of join-morphisms of a semilattice.  The main results are
stated in Section~\ref{chapter_main_results}, which are the
characterization theorems that characterize a finite simple additively
idempotent semiring as a semiring of join-morphisms of a semilattice.
Section~\ref{chapter_isomorphic_semirings} clarifies that if two
semirings considered in the main results are isomorphic, then also the
corresponding semilattices have to be isomorphic.  The question when a
semiring has an additively or multiplicatively neutral element is
answered in Section~\ref{chapter_neutral_elements}, where we also
state the complete classification of finite simple semirings with an
additively neutral element.  In Section~\ref{chapter_remaining_case}
we discuss the remaining Case~\ref{case_remaining}, and at last we
present some examples in Section~\ref{chapter_example}.

%------------------------------------------

\section{Semimodules}\label{chapter_semimodules}

In the following let $(R,+,\cdot)$ be a semiring.

\begin{defn}
  An $R$\emph{-semimodule} is a commutative semigroup $(M,+)$
  together with an $R$-multiplication $R\times M\rightarrow M$,
  $(r,x)\mapsto rx$, such that for all $r,s \in R$ and $x,y\in M$ it
  holds:
  \begin{align*}
    r(sx)=(rs)x\:,\quad (r+s)x=rx+sx \:,\ \text{\ and }\ r(x+y)=rx+ry \:.
  \end{align*}
\end{defn}

For an $R$-semimodule $(M,+)$ and a subset $N\subseteq M$ we define
$RN := \{rn\mid r\in R,\ n\in N\}$, and for $a\in M$ we define $Ra :=
\{ra\mid r\in R\}$.

\begin{defn}
  Let $(M,+)$ be an $R$-semimodule.  An \emph{($R$-)subsemimodule} of
  $(M,+)$ is a subsemigroup $(N,+)$ of $(M,+)$ such that $RN\subseteq
  N$.
\end{defn}

If $(M,+)$ is an $R$-semimodule and $a\in M$, then $(Ra,+)$ is clearly
a subsemimodule of $(M,+)$.

\begin{defn}
  Let $(M,+)$ be an $R$-semimodule.  A \emph{(semimodule)
    congruence} on $(M,+)$ is an equivalence relation $\sim $ on $M$
  such that
  \begin{displaymath}
    x\sim y\quad \text{implies}\quad 
    x+z\sim y+z\ \text{ and }\ rx\sim ry\:,
  \end{displaymath}
  for all $x,y,z\in M$ and all $r\in R$.
\end{defn}

For a congruence $\sim$ on an $R$-semimodule $(M,+)$ note that
$(M/\!\!\sim\,,+)$ with $[x]+[y]:=[x+y]$ and $r[x]:=[rx]$ is again an
$R$-semimodule, which is called \emph{quotient semimodule}.

If $(M,+)$ is an $R$-semimodule then we denote by $\End(M,+)$ the set
of all semigroup endomorphisms of the semigroup $(M,+)$.

\begin{defn}\label{def_T}
  An $R$-semimodule $(M,+)$ is called \emph{faithful} if the
  semiring homomorphism
  \begin{displaymath}
    T: R\rightarrow \End(M,+) \:, \quad r\mapsto T_r\quad \text{with}
    \quad T_r:x\mapsto rx \:,
  \end{displaymath}
  is injective.  Furthermore, $(M,+)$ is said to be \emph{faithful
    of smallest cardinality} if $(M,+)$ is faithful and any
  $R$-semimodule $(N,+)$ with cardinality $|N|<|M|$ is not faithful.
\end{defn}

\begin{defn}
  An $R$-semimodule $(M,+)$ is called \emph{trivial} if $|M|=1$.
  Furthermore, following~\cite{qt-1}, we call an $R$-semimodule
  $(M,+)$ \emph{quasitrivial} if the homomorphism $T:R\to\End(M,+)$
  from Definition~\ref{def_T} is constant, i.e., if $rx=sx$ for all
  $r,s\in R$, $x\in M$, and we call $(M,+)$ \emph{id-quasitrival} if
  $T_r = \id_M$ for all $r\in R$, i.e., if $rx=x$ for all $r\in R$,
  $x\in M$.
\end{defn}

Clearly, a trivial semimodule is id-quasitrivial, and an
id-quasitrivial semimodule is quasitrivial.

\begin{remark}\label{remark_embed}
  Let $(R,+,\cdot)$ be simple, and let $(M,+)$ be a non-quasitrivial
  $R$-semimodule.  Then $(M,+)$ is faithful and $(R,+,\cdot)$ is
  isomorphic to the subsemiring $(T(R),+,\circ)$ of
  $(\End(M,+),+,\circ)$.
\end{remark}

\begin{defn}
  An $R$-semimodule $(M,+)$ is called \emph{sub-irreducible}, if it is
  non-quasitrivial and it has only id-quasitrivial proper
  subsemimodules; it is called \emph{quotient-irreducible} if it is
  non-quasitrivial and possesses only the trivial proper quotient
  semimodule, i.e., if its only congruences are $\id_M$ and
  $M\times M$.  If an $R$-semimodule is both sub- and
  quotient-irreducible it is called \emph{irreducible}.
\end{defn}

% At this point we want to state a conjecture.

\begin{conjecture}
  Let $(R,+,\cdot)$ be finite, simple, and additively idempotent and
  $(M,+)$ a finite idempotent $R$-semimodule. Then
  $(M,+)$ is sub-irreducible iff it is quotient-irreducible.
\end{conjecture}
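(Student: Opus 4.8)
The plan is to work throughout with the order $x\le y:\Leftrightarrow x+y=y$, under which $(M,+)$ is a finite join-semilattice with greatest element $\infty_M$ and every $T_r$ is a join-endomorphism, hence monotone. Both notions in the statement include non-quasitriviality, so we may dispose of the quasitrivial case at once and, by Remark~\ref{remark_embed}, assume that $R$ acts faithfully; we then identify $R$ with a subsemiring of $(\End(M,+),+,\circ)$, so that $rx=r(x)$. Three elementary remarks set up the correspondence between the two sides: (a) for any subsemimodule $N$ the down-closure $N^{\downarrow}=\{x\in M\mid x\le n\text{ for some }n\in N\}$ is again a subsemimodule, it is id-quasitrivial iff $N$ is, and it is a \emph{proper} subsemimodule iff $\infty_M\notin N$; (b) a one-element subsemimodule is automatically id-quasitrivial (as $RN\subseteq N$), so a non-id-quasitrivial subsemimodule has at least two elements; (c) every congruence $\theta$ of $(M,+)$ is the kernel of the closure operator $f_\theta\colon x\mapsto\bigvee[x]_\theta$, and every $\theta$-class is a subsemilattice possessing a greatest element.

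For \emph{not sub-irreducible $\Rightarrow$ not quotient-irreducible} we would start from a proper subsemimodule $N$ that is not id-quasitrivial; after replacing $N$ by $N^{\downarrow}$ in the case $\infty_M\notin N$ (with a separate, more delicate argument when $\infty_M\in N$), let $\rho$ be the smallest congruence of $(M,+)$ having $N$ contained in a single block. By~(b) we have $\rho\ne\id_M$ for free, so everything comes down to showing $\rho\ne M\times M$. The idea is to argue by contradiction: if collapsing $N$ already forces $M$ to collapse entirely, then the way that collapse propagates through the $R$-multiplication ought to be governed by an equivalence relation on $R$ itself — roughly, $r\approx s$ exactly when $rx=sx$ for every $x$ that survives a suitable partial quotient of $M$ modulo $N$ — and the aim is to verify that $\approx$ is a nontrivial proper semiring congruence of $R$, contradicting simplicity. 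Making this precise, in particular choosing the partial quotient and excluding the possibility that the collapse genuinely exploits the whole multiplicative structure of $R$ rather than a quotient of it, is the step we expect to be the main obstacle; this is why the statement is only conjectured here.

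For the converse, \emph{not quotient-irreducible $\Rightarrow$ not sub-irreducible}, we would start from a congruence $\theta$ with $\id_M\subsetneq\theta\subsetneq M\times M$, pick a collapsed comparable pair $a<b$ with $a\,\theta\,b$, and aim to produce a proper non-id-quasitrivial subsemimodule. The direct candidates — a $\theta$-class, or the image of $f_\theta$ — are subsemilattices but need not be $R$-invariant, so one passes instead to their $R$-invariant hull or splits into cases. A useful split is whether $M$ is generated by a single element or not: if no single element generates $M$, then every $\langle a\rangle$ is a proper subsemimodule, so were $M$ sub-irreducible each $\langle a\rangle$ would be id-quasitrivial, forcing $rx=x$ for all $r\in R$, $x\in M$, contrary to non-quasitriviality; one is thereby reduced to singly-generated $M$, where the structural results on idempotent irreducible semimodules from Section~\ref{chapter_semimodules}, together with the rigidity of simple subsemirings of $\JM(\bM)$ from Sections~\ref{section_embedding} and~\ref{chapter_subsemirings}, should pin $\theta$ down to $\id_M$ or $M\times M$.

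In short, the skeleton is: (i) reduce to $R$ acting faithfully by join-endomorphisms on the finite join-semilattice $M$; (ii) in each direction write down the natural candidate — the congruence generated by a bad subsemimodule, respectively the subsemimodule attached to a bad congruence class; and (iii) when the candidate degenerates, extract from it a nontrivial proper congruence of $R$ and contradict simplicity. The instance of step~(iii) needed for the first implication is the one we cannot yet carry out in full generality. Note finally that the hypothesis of simplicity on $R$ is essential: a suitable five-element subsemiring of the join-endomorphisms of the three-element chain acts on that chain as a non-quasitrivial, quotient-irreducible but not sub-irreducible semimodule — and that semiring is not simple — so any proof must genuinely use simplicity.
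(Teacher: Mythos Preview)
The statement you are attempting is a \emph{Conjecture} in the paper, not a theorem: the paper offers no proof whatsoever, only the remark that ``all semimodules considered in experiments satisfy this equivalence.'' There is therefore no paper proof to compare your proposal against. You yourself recognize this --- your write-up twice concedes that the key step cannot be carried out (``this is why the statement is only conjectured here''; ``the instance of step~(iii) needed for the first implication is the one we cannot yet carry out in full generality''). What you have written is a heuristic programme, not a proof, and it is honest about that.

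A few technical comments on the programme as stated. In your remark~(a), the claim that $N^{\downarrow}$ is id-quasitrivial iff $N$ is does not hold in the direction you need: from $rn=n$ for all $n\in N$ one cannot conclude $rx=x$ for all $x\le n$, only $rx\le n$. So replacing $N$ by $N^{\downarrow}$ may destroy the very property (non-id-quasitriviality) you want to preserve. In the converse direction, your reduction ``if no single element generates $M$ then every $\langle a\rangle$ is proper, hence id-quasitrivial, hence $rx=x$ everywhere'' is essentially Proposition~\ref{prop_Ra=M} together with Lemmas~\ref{lemma_ra=sa} and~\ref{lemma_a=b}; note that those results already \emph{assume} sub-irreducibility, so invoking them to deduce sub-irreducibility from quotient-irreducibility needs care about what is being assumed where. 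Finally, your appeal to ``the rigidity of simple subsemirings of $\JM(\bM)$ from Sections~\ref{section_embedding} and~\ref{chapter_subsemirings}'' is circular in spirit: those sections take an \emph{irreducible} (i.e., both sub- and quotient-irreducible) semimodule as input, which is precisely what the conjecture would let you avoid assuming. None of this is fatal to an exploratory outline, but it confirms that the conjecture remains open and that your sketch does not close it.
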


Evidence for this conjecture is given by the fact that all semimodules
considered in experiments satisfy this equivalence.

%-----------

\subsection{Existence of idempotent irreducible semimodules}%
\label{chapter_existence}

Let $(R,+,\cdot)$ be in this section a finite simple semiring. The
main result of this section is
Proposition~\ref{existence_of_irreducible_semimod}, which states that
$(R,+,\cdot)$ admits a finite idempotent irreducible semimodule if
$(R,+,\cdot)$ is additively idempotent and fulfills $|R|>2$.

If a commutative semigroup $(S,+)$ (e.g., a semimodule or the additive
semigroup of a semiring) is idempotent, then it is a
semilattice. Hence, it can be regarded as an ordered set $(S,\leq)$,
where $x\leq y :\Leftrightarrow x+y=y$ for all $x,y\in S$. If $(S,+)$
has a neutral element $0_S$, then $0_S$ is the least element in
$(S,\leq)$.  Conversely, a least element in $(S,\leq)$ is a neutral
element in $(S,+)$. If $S$ is finite then $(S,\leq)$ has a least
element iff $(S,\leq)$ is a lattice.

A finite semilattice has a greatest element. To avoid confusion with
multiplicatively neutral elements, we denote the greatest element of a
semilattice $(S,+)$, which is an idempotent semimodule or the
idempotent additive structure of a semiring, by $\infty_S$ or just by
$\infty$ if it is clear to which semilattice $\infty$ belongs.

If for elements $a,b\in P$ with $a< b$ in an ordered set
$(P,\leq)$ it holds that
\begin{displaymath}
  a\leq c\leq b\quad\text{implies}\quad a=c\ \text{ or }\ c=b \:,
\end{displaymath}
then $a$ is called a \emph{lower neighbor} of $b$ and $b$ an
\emph{upper neighbor} of $a$.  An element $m\in P$ is called
\emph{minimal} in $(P,\leq)$ if there is no element $n\in P$ with
$n<m$.  Let $\Min(P,\leq)$ denote the set of minimal elements in
$(P,\leq)$.  Furthermore, for $x\in S$ denote $x_\downarrow:=\{y\in
S\mid y\leq x\}$ and $x^\uparrow:=\{y\in S\mid y\geq x\}$.

The following lemma is \cite[Lemma 6]{monico}.

\begin{lemma}\label{monico}
  If the multiplication table of $(R,+,\cdot)$ has two identical rows
  or two identical columns, then $|RR|=1$ or $|R|=2$.
\end{lemma}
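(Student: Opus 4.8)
The plan is to prove the contrapositive: assuming $|RR|>1$ and $|R|>2$, I would show that no two rows of the multiplication table coincide, and dually that no two columns coincide. By the symmetry $(R,+,\cdot)\mapsto(R,+,\cdot^{\mathrm{op}})$ it suffices to treat rows, so I would focus on showing that $r\mapsto$ (the map $x\mapsto rx$) is injective on~$R$. The natural device is to build a congruence from a failure of injectivity: define, for $a,b\in R$, the relation $\sim$ by $s\sim t :\Leftrightarrow as+c = bs+c$ \ldots\ — more precisely, I would consider the relation $\theta$ on $R$ given by $x\mathrel{\theta} y :\Leftrightarrow rx=ry$ for all $r\in R$, i.e.\ $x$ and $y$ act identically on the left. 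One checks immediately that $\theta$ is an equivalence relation, and it is a congruence: if $rx=ry$ for all $r$, then $r(x+t)=rx+rt=ry+rt=r(y+t)$, and $r(xt)=(rx)t=(ry)t=r(yt)$, and $r(tx)=(rt)x = (rt)y = r(ty)$ using that $rt\in R$; so $\theta$ is compatible with $+$ and with both multiplications. Since $R$ is simple, $\theta=\id_R$ or $\theta = R\times R$.

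If $\theta=\id_R$, then two equal rows $ax=bx$ for all $x$ force $a=b$, so the rows are pairwise distinct; combined with the dual argument for columns, the contrapositive is established. So the remaining case is $\theta = R\times R$, i.e.\ $rx=ry$ for all $r,x,y\in R$; equivalently every product $rx$ is independent of~$x$. Write $r^* := rx$ (any $x$); then the map $r\mapsto r^*$ has image exactly $RR$. The hard part is to extract a contradiction with $|RR|>1$ and $|R|>2$ here. The key observation is that $RR$ is then forced to be very small: from $rx = r^*$ for all $x$ we get in particular $r^*\cdot r^* = r^*$ for the "value" — more carefully, for any $p\in RR$, say $p = rx$, and any $s\in R$ we have $sp = s(rx) = s\,(rx)$, and since left multiplication by $s$ is constant, $sp = s^*$, independent of~$p$; also $ps = (rx)s = r(xs) = r\,(xs)$ and left-multiplication by $r$ is constant so $ps = r^* = p$. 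Thus every element of $RR$ is right-absorbing within the whole semiring, and all elements of $RR$ act the same way on the left. Now I would use additive idempotency is \emph{not} assumed here, so instead I reason directly: pick distinct $p,q\in RR$ (possible since $|RR|>1$); for any $z\in R$, $zp$ and $zq$ both equal $z^*\in RR$, and $pz = p$, $qz=q$. Consider the equivalence relation on $R$ that identifies $p$ and $q$ only, i.e.\ $\rho := \id_R \cup \{(p,q),(q,p)\}$; I claim $\rho$ is a congruence. Compatibility with left multiplication: $zp = z^* = zq$, fine. With right multiplication: $pz = p$, $qz = q$, but $p\ne q$ in general — so $\rho$ need not be a congruence, and this naive attempt fails.

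The honest route for the $\theta = R\times R$ case is therefore different: I would instead show this case cannot produce a simple semiring with $|RR|>1$ and $|R|>2$ at all, by analyzing $(R,+)$. Since $rx$ does not depend on $x$, the second distributive law gives $(r+s)x = rx + sx$, i.e.\ $(r+s)^* = r^* + s^*$, so $r\mapsto r^*$ is an additive homomorphism $R\to(R,+)$ with image $RR$; and the first distributive law gives $r(x+y) = rx+ry$, i.e.\ $r^* = r^* + r^*$, so every element of $RR$ is additively idempotent, and moreover $r^* + s^*$ for the homomorphism shows $RR$ is an idempotent subsemigroup, hence a semilattice with a top element $\infty_{RR} = \sum_{p\in RR} p$. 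Now the fibers of $r\mapsto r^*$ refine a congruence: define $x\approx y$ iff $x^* = y^*$; this is exactly $\theta$ restricted appropriately — wait, $x^* = rx$ uses left mult, whereas $\theta$ used $rx=ry$; in fact $x^*$ depends on $x$ here (it's $x$ acting, $x^* = x\cdot(\text{anything})$), I conflated notations. Let me restate: with $\theta = R\times R$ meaning $rx$ is independent of the \emph{right} argument, put $\lambda(r):=rx$; then $\lambda: R\to R$, $\lambda(r+s)=\lambda(r)+\lambda(s)$, $\lambda(r)=\lambda(r)+\lambda(r)$, $\lambda(rs) = r\lambda(s)=\lambda(r)$ wait $r\lambda(s)=r(s x)= \lambda(r)$ independent of the argument $sx$, yes so $\lambda(rs)=\lambda(r)$, i.e.\ $\lambda\circ(\text{right mult by }s) = \lambda$. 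The congruence to use is $\ker\lambda := \{(x,y)\mid \lambda(x)=\lambda(y)\}$: it is an equivalence relation, compatible with $+$ since $\lambda$ is additive, compatible with left mult since $\lambda(zx)=\lambda(z)$ (independent of $x$!), hence $\lambda(zx)=\lambda(z)=\lambda(zy)$, and compatible with right mult since $\lambda(xz)=\lambda(x)=\lambda(y)=\lambda(yz)$. So $\ker\lambda$ is a congruence; simplicity forces $\ker\lambda=\id_R$ or $R\times R$. If $\ker\lambda = \id_R$ then $\lambda$ is injective, but $\lambda(rs)=\lambda(r)$ forces $rs = r$ for all $r,s$, so every element is right-absorbing, and then $|RR|=|R|$; additionally for fixed $s$, $xs=x$ means the $s$-column is the identity map, so every column equals every other column only if $|R|=1$ — hmm, that gives distinct columns, not a contradiction. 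But right-absorbing-everywhere also means $rx = r$, so $rx=ry$ iff $r=r$, trivially true, consistent with $\theta = R\times R$; and then the rows of the table: row $r$ is the constant map $x\mapsto r$, so all rows are constant, and rows $r,r'$ are identical iff $r=r'$ — distinct. So simplicity is not yet contradicted; I would then invoke that $(R,+,\cdot)$ with $xy=x$ throughout is a "left-zero" multiplication and show its only way to be simple with $|R|>2$ forces $|RR|=1$: indeed with $xy=x$, $|RR| = |R| > 2$, contradicting our standing assumption $|RR|>1$?? No — $|RR|>1$ is what we assumed, and here $|RR|=|R|>2>1$, so this subcase is consistent with the hypotheses and must instead be killed by producing a nontrivial congruence in $(R,+)$: since $(R,+)$ is a commutative semigroup, by Lemma~\ref{simple_semigroups}-type reasoning or directly, its congruence lattice interacts with $\cdot$; but with $xy=x$, \emph{every} additive congruence is automatically a semiring congruence (right mult acts as identity on classes: $x\sim y\Rightarrow xz=x\sim y=yz$; left mult: $zx = z \sim z = zy$), so $(R,+)$ must be a simple commutative semigroup, which by the structure of finite simple commutative semigroups has order $\le 2$ — contradiction with $|R|>2$. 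The remaining subcase $\ker\lambda = R\times R$ means $\lambda$ is constant, $rx = c$ for a fixed $c\in R$ and all $r,x$, so $RR = \{c\}$, i.e.\ $|RR|=1$, directly contradicting $|RR|>1$. This completes the $\theta = R\times R$ case and hence the lemma.

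I expect the main obstacle to be exactly this second case ($\theta = R\times R$): it requires noticing that when products are independent of the right argument, the semiring multiplication degenerates to a "left-projection" type operation, so that every additive congruence becomes a semiring congruence, and then invoking the classification (or an elementary argument) that finite simple commutative semigroups have at most two elements to close the gap. The first case ($\theta = \id_R$) and the column-analogue are routine once $\theta$ is seen to be a congruence.
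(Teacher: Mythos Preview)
The paper does not supply its own proof of this lemma; it simply cites \cite[Lemma~6]{monico}. So there is no ``paper's approach'' to compare against, and your proposal has to stand on its own.

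Your overall strategy---building a congruence $\theta$ from the ``equal columns'' relation, invoking simplicity, and in the degenerate case $\theta=R\times R$ passing to the secondary congruence $\ker\lambda$---is sound and is essentially how such results are proved. The verification that $\theta$ and $\ker\lambda$ are semiring congruences is correct, and the subcase $\ker\lambda=R\times R$ cleanly yields $|RR|=1$.

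There is, however, a genuine gap at the final step of the subcase $\ker\lambda=\id_R$ (where $rs=r$ for all $r,s$). You conclude that $(R,+)$ must be a congruence-simple commutative semigroup and assert that such a semigroup has order at most~$2$. That assertion is false: $(\mathbb{Z}/p\mathbb{Z},+)$ is a congruence-simple commutative semigroup of order $p$ for every prime~$p$. What rescues the argument is something you observed earlier but failed to invoke here: from $r(x+y)=rx+ry$ you derived $\lambda(r)=\lambda(r)+\lambda(r)$, so every element of $RR$ is additively idempotent; and in the present subcase $RR=R$, so $(R,+)$ is in fact a \emph{semilattice}. For finite semilattices the claim is true: if $|R|>2$, pick $x\neq\infty_R$ and the partition $\{x_\downarrow,\,R\setminus x_\downarrow\}$ gives a nontrivial congruence of $(R,+)$, hence of $(R,+,\cdot)$. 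With this correction the proof is complete.

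One minor expositional point: your relation $\theta$ (defined by $x\,\theta\,y\Leftrightarrow rx=ry$ for all $r$) detects equal \emph{columns}, not equal rows, so the reduction ``by symmetry it suffices to treat rows'' at the start should read ``columns''; the row case then follows by passing to the opposite semiring, as you say.
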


\begin{lemma}\label{lemma_existence_of_faithful_semimod}
  Let $|RR|>1$ and $|R|>2$.  Then $(R,+)$ as an $R$-semimodule is not
  quasitrivial.  Therefore, there exists a faithful $R$-semimodule
  $(M,+)$ of smallest cardinality and $|M|\le |R|$.
\end{lemma}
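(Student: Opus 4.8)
The plan is to prove the contrapositive of the first claim: if $(R,+)$ as an $R$-semimodule \emph{is} quasitrivial, then $|RR|=1$ or $|R|\le 2$. Recall that quasitriviality of $(R,+)$ means $rx=sx$ for all $r,s,x\in R$, i.e., $T_r=T_s$ as maps $R\to R$. In particular, writing the multiplication table of $(R,\cdot)$ with rows indexed by the left factor, the row of $r$ is exactly the graph of $T_r$; so quasitriviality says \emph{all rows of the multiplication table are identical}. By Lemma~\ref{monico} this immediately forces $|RR|=1$ or $|R|=2$, which is exactly what we want. Hence under the hypotheses $|RR|>1$ and $|R|>2$ the semimodule $(R,+)$ cannot be quasitrivial.

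For the second assertion I would argue as follows. Since $(R,+)$ is a non-quasitrivial $R$-semimodule, the homomorphism $T:R\to\End(R,+)$ is non-constant; but I need more, namely that $R$ admits a \emph{faithful} semimodule at all, and then extract one of minimal cardinality. The natural candidate is $(R,+)$ itself: one should check that $T:R\to\End(R,+)$ is injective. This is where simplicity (or at least the absence of identical rows) enters — the kernel of $T$, i.e., the relation $r\sim s:\Leftrightarrow rx=sx\ \forall x$, is a semiring congruence on $R$ (it is clearly an equivalence relation compatible with $+$ and with left and right multiplication, the last using the other distributive law), so $T$ being non-constant and the congruence lattice consisting only of $\id_R$ and $R\times R$ forces this congruence to be $\id_R$, i.e., $T$ injective. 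Thus $(R,+)$ is a faithful $R$-semimodule. Alternatively, and perhaps cleaner, one can invoke Remark~\ref{remark_embed}: $(R,+)$ is a non-quasitrivial $R$-semimodule over the simple semiring $R$, hence faithful.

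Having produced one faithful $R$-semimodule, namely $(R,+)$ with $|R|$ elements, the set of cardinalities of faithful $R$-semimodules is a nonempty set of positive integers bounded below, so it has a least element; any faithful $R$-semimodule realizing that least cardinality is, by definition, faithful of smallest cardinality, and its cardinality is $\le|R|$ because $(R,+)$ is a competitor. This gives the existence of $(M,+)$ with $|M|\le|R|$.

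The only genuine subtlety — and the step I would be most careful about — is the passage from "$(R,+)$ is not quasitrivial" to "$(R,+)$ is faithful". It is tempting to think non-quasitrivial already means faithful, but that is false in general; it holds here precisely because $R$ is simple (Remark~\ref{remark_embed}), since the congruence $\ker T$ on $R$ is then either all of $R\times R$ — ruled out by non-quasitriviality — or trivial. Everything else is bookkeeping: verifying that $\ker T$ really is a semiring congruence (both distributive laws are needed, one for each side of the multiplication), and the elementary well-ordering argument for the existence of a minimal-cardinality example. I would present the first part via Lemma~\ref{monico} as above and the second part by citing Remark~\ref{remark_embed} for faithfulness of $(R,+)$, then concluding with the minimality argument.
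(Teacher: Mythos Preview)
Your proof is correct and follows essentially the same approach as the paper: both argue the contrapositive via Lemma~\ref{monico} (quasitriviality forces all rows identical), then pass to faithfulness of $(R,+)$ using simplicity, and finally take a minimal-cardinality faithful semimodule. The paper is terser --- it writes only ``Therefore, $(R,+)$ is faithful and the rest is clear'' --- whereas you explicitly justify the non-quasitrivial $\Rightarrow$ faithful step via Remark~\ref{remark_embed} (or the $\ker T$ congruence argument), which is exactly the implicit reasoning the paper is relying on.
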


\begin{proof}
  Suppose $(R,+)$ is quasitrivial, then $rt=st$ for all $r,s,t\in
  R$.  Thus, every two rows in the multiplication table of
  $(R,+,\cdot)$ are identical, in contradiction to Lemma~\ref{monico}.
  Therefore, $(R,+)$ is faithful and the rest is clear.
\end{proof}

If $(M,+)$ is a faithful $R$-semimodule of smallest cardinality, then
all proper sub- and quotient-semimodules of $(M,+)$ are non-faithful,
and therefore quasitrivial.  We will show in
Proposition~\ref{prop_M_is_irreducible} that $(M,+)$ is even
irreducible.

\begin{lemma}\label{lemma_ra=sa}
  Let $|RR|>1$, $|R|>2$, let $(M,+)$ be an $R$-semimodule and let
  $a\in M$ such that the subsemimodule $(Ra,+)$ is quasitrivial.  Then
  $|Ra|=1$.
\end{lemma}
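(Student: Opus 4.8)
The plan is to turn the equation ``$ra=sa$'' into a congruence of the semiring $R$ and then invoke simplicity. Concretely, I would define a relation $\theta$ on $R$ by $r\,\theta\,s :\Leftrightarrow ra=sa$.

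First I would check that $\theta$ is a semiring congruence. It is visibly an equivalence relation, and compatibility with $+$ and with left multiplication is routine: from $ra=sa$ one gets $(r+t)a=ra+ta=sa+ta=(s+t)a$ and $(tr)a=t(ra)=t(sa)=(ts)a$. The only point where the hypothesis on $(Ra,+)$ enters is compatibility with right multiplication: to see $(rt)a=(st)a$ for all $t\in R$, write $(rt)a=r(ta)$ and $(st)a=s(ta)$; since $ta\in Ra$ and $(Ra,+)$ is quasitrivial we have $r(ta)=s(ta)$, hence $(rt)a=(st)a$. (In fact this computation shows $(rt)a=(st)a$ for all $r,s,t\in R$.)

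Then, since $(R,+,\cdot)$ is simple, $\theta$ is either $R\times R$ or $\id_R$. If $\theta=R\times R$ then $ra=sa$ for all $r,s\in R$, i.e.\ $|Ra|=1$, which is the claim. It remains to exclude $\theta=\id_R$. In that case the map $\varphi\colon R\to Ra$, $r\mapsto ra$, is injective; but quasitriviality of $(Ra,+)$ gives $r(xa)=s(xa)$, that is $\varphi(rx)=\varphi(sx)$, for all $r,s,x\in R$, so injectivity of $\varphi$ forces $rx=sx$ for all $r,s,x\in R$. Thus any two rows of the multiplication table of $(R,+,\cdot)$ coincide, and since $|R|>2$ Lemma~\ref{monico} yields $|RR|=1$, contradicting $|RR|>1$. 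Hence $\theta=R\times R$ and $|Ra|=1$.

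The one step requiring an idea rather than bookkeeping is the observation that quasitriviality of $(Ra,+)$ is precisely what makes $\theta$ respect right multiplication; after that, simplicity of $R$ together with Lemma~\ref{monico} finishes the argument, and the two remaining hypotheses $|R|>2$ and $|RR|>1$ are both used exactly in the final contradiction.
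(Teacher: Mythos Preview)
Your proof is correct and follows essentially the same approach as the paper: define the relation $r\sim s\Leftrightarrow ra=sa$, use quasitriviality of $(Ra,+)$ to verify it is a semiring congruence (the key step being compatibility with right multiplication), and then eliminate the case $\sim\,=\id_R$ via Lemma~\ref{monico}. The only cosmetic difference is that the paper packages the final contradiction through Lemma~\ref{lemma_existence_of_faithful_semimod} rather than citing Lemma~\ref{monico} directly, and it reads off $rt=st$ immediately from $rt\sim_a st$ rather than phrasing this via injectivity of $r\mapsto ra$.
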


\begin{proof}
  On the $R$-semimodule $(R,+)$ consider the congruence $\sim_a$
  defined by
  \begin{displaymath}
    r\sim_a s \quad :\Leftrightarrow \quad ra=sa
  \end{displaymath}
  for $r,s\in R$.  Since $(Ra,+)$ is quasitrivial, for all $r,s,t\in
  R$ it holds that $(rt)a=r(ta)=s(ta)=(st)a$, so that $rt\sim st$, and
  $\sim_a$ is even a semiring congruence on $(R,+,\cdot)$.  Supposing
  $\sim_a \:= \id_R$, we have $rt=st$ for all $r,s,t\in R$,
  contradicting Lemma~\ref{lemma_existence_of_faithful_semimod}.  By
  simplicity of $(R,+,\cdot)$, then $\sim_a \:= R\times R$, so that
  $|Ra|=1$.
\end{proof}

\begin{lemma}\label{lemma_Ra=M}
  Let $|RR|>1$, $|R|>2$, and let $(M,+)$ be a faithful $R$-semimodule
  of smallest cardinality.  Then there exists $a\in M$ with $Ra=M$.
\end{lemma}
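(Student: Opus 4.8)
The plan is to use the minimality of $(M,+)$ together with simplicity of $(R,+,\cdot)$: every proper subsemimodule of $(M,+)$ has strictly smaller cardinality, hence is not faithful, hence — by the contrapositive of Remark~\ref{remark_embed}, which applies because $(R,+,\cdot)$ is simple — is quasitrivial. So I would first record that $(M,+)$ itself is \emph{not} quasitrivial: the homomorphism $T\colon R\to\End(M,+)$ is injective because $(M,+)$ is faithful, and it would be constant if $(M,+)$ were quasitrivial, forcing $|R|=1$ against the hypothesis $|R|>2$. Consequently there is some $a\in M$ with $|Ra|>1$.

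Fix such an $a$. Since $(Ra,+)$ is a subsemimodule of $(M,+)$, either $Ra=M$, in which case we are done, or $Ra\subsetneq M$. In the second case $|Ra|<|M|$, so $(Ra,+)$ is not faithful by the choice of $(M,+)$ as faithful of smallest cardinality, and therefore $(Ra,+)$ is quasitrivial. The hypotheses $|RR|>1$ and $|R|>2$ are exactly those of Lemma~\ref{lemma_ra=sa}, which then yields $|Ra|=1$, contradicting $|Ra|>1$. Hence $Ra=M$, as claimed.

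I expect no genuine obstacle here; the one point that needs care is the chain ``proper subsemimodule $\Rightarrow$ non-faithful $\Rightarrow$ quasitrivial'', i.e.\ that the contrapositive direction of Remark~\ref{remark_embed} is being invoked correctly and that the standing assumption of this section (that $(R,+,\cdot)$ is finite and simple) is what licenses it, together with Lemma~\ref{lemma_existence_of_faithful_semimod} guaranteeing that a faithful $R$-semimodule of smallest cardinality exists in the first place. Everything else is just unwinding the definitions of ``faithful of smallest cardinality'', ``quasitrivial'', and the subsemimodule $Ra$.
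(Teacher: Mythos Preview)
Your proof is correct and follows essentially the same approach as the paper's. The paper argues by contradiction that if $Ra\subsetneq M$ for \emph{every} $a\in M$, then each $(Ra,+)$ is non-faithful, hence quasitrivial, hence $|Ra|=1$ by Lemma~\ref{lemma_ra=sa}, making $(M,+)$ itself quasitrivial and thus not faithful; you instead first extract a single $a$ with $|Ra|>1$ from non-quasitriviality of $(M,+)$ and run the same chain on that one element, which is merely a contrapositive reorganization of the identical argument.
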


\begin{proof}
  Assume on the contrary that for any $a\in M$ it holds that
  $Ra\subsetneq M$, then the $R$-semimodule $(Ra,+)$ is not faithful
  and, by simplicity of $(R,+,\cdot)$, it is quasitrivial, hence
  Lemma~\ref{lemma_ra=sa} implies that $|Ra|=1$.  This means that
  $(M,+)$ is quasitrivial, which contradicts the assumption that
  $(M,+)$ is faithful.
\end{proof}

\begin{lemma}\label{lemma_quotient-irr}
  Let $|RR|>1$, $|R|>2$, and let $(M,+)$ be a faithful $R$-semimodule
  of smallest cardinality.  Then $(M,+)$ is quotient-irreducible.
\end{lemma}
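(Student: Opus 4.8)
The plan is to prove the statement directly by showing that the only congruences on $(M,+)$ are $\id_M$ and $M\times M$; combined with the observation that $(M,+)$, being faithful with $|R|>2$, cannot be quasitrivial (a constant map $T\colon R\to\End(M,+)$ cannot be injective when $|R|>2$), this is exactly quotient-irreducibility.

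So let $\sim$ be a congruence on $(M,+)$ with $\sim\,\ne\id_M$, and aim to conclude $\sim\,=M\times M$. Since $M$ is finite (indeed $|M|\le|R|$ by Lemma~\ref{lemma_existence_of_faithful_semimod}) and $\sim$ collapses at least two elements, the quotient semimodule $(M/\!\!\sim,+)$ has strictly smaller cardinality than $M$. By minimality of $|M|$ among faithful $R$-semimodules, $(M/\!\!\sim,+)$ is not faithful; hence, as already noted after Lemma~\ref{lemma_quotient-irr}'s setup, simplicity of $(R,+,\cdot)$ forces it to be quasitrivial (the kernel of $T\colon R\to\End(M/\!\!\sim,+)$ is a semiring congruence on $R$, not $\id_R$, hence all of $R\times R$). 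Spelling this out: $[rx]=r[x]=s[x]=[sx]$, i.e.\ $rx\sim sx$, for all $r,s\in R$ and all $x\in M$.

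Now I would invoke Lemma~\ref{lemma_Ra=M} to choose $a\in M$ with $Ra=M$, and specialize the relation $rx\sim sx$ to $x=a$: then $ra\sim sa$ for all $r,s\in R$, so any two elements of $Ra=M$ are $\sim$-related, giving $\sim\,=M\times M$ as desired. Hence $\sim\in\{\id_M,M\times M\}$, and $(M,+)$ is quotient-irreducible.

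I do not expect a genuine obstacle here: the argument is short once the earlier lemmas are in place. The two points to be careful about are that a \emph{proper} quotient of the \emph{finite} semimodule $M$ is strictly smaller (so that the minimality hypothesis actually applies) and that ``non-faithful implies quasitrivial'' — which is precisely where simplicity of $R$ is used. The one substantive idea is to play the quasitriviality of the smaller quotient against the existence of a cyclic generator $a$ with $Ra=M$.
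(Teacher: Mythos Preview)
Your proof is correct and follows essentially the same route as the paper: pass to the quotient $M/\!\!\sim$, use minimality of $|M|$ to conclude it is not faithful, use simplicity of $R$ to deduce quasitriviality of the quotient, and then combine $[ra]=[sa]$ with the cyclic generator $a$ from Lemma~\ref{lemma_Ra=M} to obtain $\sim\,=M\times M$. You are even a bit more explicit than the paper in justifying finiteness of $M$ and the step from non-faithful to quasitrivial.
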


\begin{proof}
  Let $\sim$ be a semimodule congruence on $(M,+)$ distinct from
  $\id_M$ and let $N:=M/\!\!\sim$.  Then $(N,+)$ is not faithful and
  therefore it holds that $[rx]=r[x]=s[x]=[sx]$ for all $r,s\in R$
  and $x\in M$. By Lemma~\ref{lemma_Ra=M} there exists $a\in M$ with
  $Ra=M$. Since $[ra]=[sa]$ for all $r,s\in R$, we see that $\sim \:=
  M\times M$ and the statement follows.
\end{proof}

\begin{proposition}\label{prop_M_is_irreducible}
  Let $|RR|>1$, $|R|>2$, and let $(M,+)$ be a faithful $R$-semi\-module
  of smallest cardinality. Then $(M,+)$ is irreducible.
\end{proposition}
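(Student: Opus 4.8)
The plan is to combine the two previous lemmas. We already know from Lemma~\ref{lemma_quotient-irr} that $(M,+)$ is quotient-irreducible, so it remains only to show that $(M,+)$ is sub-irreducible; that is, $(M,+)$ is non-quasitrivial (which is part of being faithful of smallest cardinality, since by Lemma~\ref{lemma_existence_of_faithful_semimod} a quasitrivial semimodule is not faithful when $|RR|>1$ and $|R|>2$) and every proper subsemimodule of $(M,+)$ is id-quasitrivial.

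So let $(N,+)$ be a proper subsemimodule of $(M,+)$. Since $(M,+)$ is faithful of smallest cardinality and $|N|<|M|$, the semimodule $(N,+)$ is not faithful, hence by Remark~\ref{remark_embed} (applied to the simple semiring $(R,+,\cdot)$) it is quasitrivial. I would now like to upgrade "quasitrivial" to "id-quasitrivial". The point is that for every $a\in N$ the subsemimodule $(Ra,+)$ is itself quasitrivial (it is a subsemimodule of the quasitrivial $(N,+)$), so Lemma~\ref{lemma_ra=sa} applies and gives $|Ra|=1$. Thus for each $a\in N$ there is a unique element, call it $a^*$, with $ra=a^*$ for all $r\in R$; in particular $a^* = a^* \cdot 1$-style reasoning is not available, so one argues directly: for $r,s\in R$ we have $r(sa) = (rs)a = a^*$ and also $r(sa)=r(a^*)=(a^*)^*$, whence $a^* = (a^*)^*$, i.e. $ra^* = a^*$ for all $r$. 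The remaining task is to identify $a^*$ with $a$ itself.

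To see $a^* = a$, I would use that $a\in N\subseteq M$ and $(Ra,+)$ is a one-element subsemimodule $\{a^*\}$ of $(M,+)$. Consider the relation on $M$ that identifies $a$ with $a^*$ and is the identity elsewhere — more robustly, consider the semimodule congruence on $(M,+)$ generated by the pair $(a,a^*)$. If $a\neq a^*$ this is a congruence distinct from $\id_M$, so by quotient-irreducibility of $(M,+)$ it must be $M\times M$; but the quotient $M/(M\times M)$ is trivial, while a non-quasitrivial semimodule cannot have a trivial faithful... — here one must be careful, as the quotient need not be faithful. The cleaner route: since $Ra = M$ for \emph{some} $a\in M$ by Lemma~\ref{lemma_Ra=M}, and that $a$ generates all of $M$, if such an $a$ lay in a proper subsemimodule $N$ we would get $M = Ra \subseteq N$, a contradiction; so the generator $a_0$ with $Ra_0=M$ is \emph{not} in any proper subsemimodule. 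For the elements $a\in N$, I instead argue that $a = 1_R$-free: pick the generator $a_0$, write $a_0 = \sum$ nothing — rather, use $Ra_0 = M$ so $a = r a_0$ for some $r\in R$, hence for any $s\in R$, $sa = s(ra_0) = (sr)a_0 \in Ra_0$, and since $(Ra,+)=\{a^*\}$ we need $a$ itself to be a fixed point: taking $s$ ranging over $R$ and using $(sr)a_0$ hitting $a$ (choose $sr$ appropriately, possible since... ) — this is the delicate point.

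\textbf{Main obstacle.} The genuinely tricky step is passing from \emph{quasitrivial} to \emph{id-quasitrivial} for the proper subsemimodule $(N,+)$, i.e.\ showing the common value $ra$ equals $a$ rather than some other fixed point. I expect the intended argument exploits the surjection $Ra_0 = M$ from Lemma~\ref{lemma_Ra=M}: every $a\in M$ has the form $a = ra_0$, and then for $s\in R$, $sa = (sr)a_0$; combined with quotient-irreducibility one shows the map $x\mapsto$ (common value on $Rx$, when $Rx$ is a singleton) must be the identity on its domain, because otherwise one manufactures a proper nontrivial congruence on $(M,+)$, contradicting Lemma~\ref{lemma_quotient-irr}. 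Making that congruence-construction precise — checking it is compatible with both $+$ and the $R$-action and is neither $\id_M$ nor $M\times M$ — is where the real work lies; everything else is bookkeeping with the earlier lemmas.
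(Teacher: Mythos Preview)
Your overall strategy matches the paper's: quotient-irreducibility comes from Lemma~\ref{lemma_quotient-irr}, proper subsemimodules are quasitrivial and hence satisfy $|Ra|=1$ by Lemma~\ref{lemma_ra=sa}, and the remaining task is to show $a^*=a$ for each $a\in N$. You also correctly prove $(a^*)^*=a^*$. But the proof is genuinely incomplete at the step you yourself flag as the ``main obstacle'': neither of your two attempts (the congruence generated by $(a,a^*)$, or writing $a=ra_0$ with $Ra_0=M$) closes the gap. In the first attempt, getting $\sim\,=M\times M$ from $a\ne a^*$ produces no contradiction --- the trivial quotient simply fails to be faithful, which is allowed. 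The second attempt never isolates a usable equation.

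The paper's device is exactly the congruence you were reaching for, but chosen globally rather than generated by a single pair: define on \emph{all} of $M$
\[
  x\sim y \quad:\Longleftrightarrow\quad \forall\,r\in R:\ rx=ry\,.
\]
This is easily checked to be a semimodule congruence. If $\sim\,=M\times M$, then all the sets $Rx$ coincide; since $Ra_0=M$ for some $a_0$ by Lemma~\ref{lemma_Ra=M} and $|Rn|=1$ for $n\in N$, this forces $|M|=1$, a contradiction. Hence $\sim\,=\id_M$. Now the map $f:N\to N$, $f(n)=n^*$, is \emph{injective}: $f(n_1)=f(n_2)$ means $rn_1=rn_2$ for all $r$, i.e.\ $n_1\sim n_2$, so $n_1=n_2$. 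Combined with your observation $f(f(n))=f(n)$, injectivity gives $f(n)=n$, i.e.\ $Rn=\{n\}$, and $(N,+)$ is id-quasitrivial. The key point you were missing is that the ``right'' congruence is the kernel-pair of the map $x\mapsto(r\mapsto rx)$; once $\sim\,=\id_M$ is established, injectivity of $f$ follows for free and the fixed-point argument finishes immediately.
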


\begin{proof}
  From Lemma~\ref{lemma_quotient-irr} we know that $(M,+)$ is
  quotient-irreducible.  Now let $(N,+)$ be a proper subsemimodule of
  $(M,+)$, which has to be quasitrivial, so that $|Rn|=1$ for every
  $n\in N$.  Define the equivalence relation $\sim $ on $M$ by
  \begin{displaymath}
    x\sim y \quad :\Leftrightarrow \quad \forall r\in R:\ rx=ry
  \end{displaymath}
  for all $x,y\in M$. Let $x,y,z\in M$ with $x\sim y$ and let
  $r,s\in R$.  It holds that $r(x+z)=rx+rz=ry+rz=r(y+z)$, i.e.,
  $x+z\sim y+z$.  It also holds that $r(sx)=(rs)x=(rs)y=r(sy)$, i.e.,
  $sx\sim sy$. Thus, $\sim$ is a semimodule congruence on $(M,+)$.
  Lemma~\ref{lemma_quotient-irr} implies that $\sim \:= M\times M$ or
  $\sim \:= \id_M$.  Assume that $\sim \:= M\times M$.  Then for all
  $x,y\in M$ and all $r\in R$ it holds that $rx=ry$, so in particular
  $Rx=Ry$.  By Lemma~\ref{lemma_Ra=M} there exists $a\in M$ with
  $Ra=M$.  Then $M=Ra=Rn$ for all $n\in N$, and in particular
  $|M|=|Rn|=1$, a contradiction.

  It must hold that $\sim \:= \id_M$.  For every $n\in N$ there is
  $f(n)\in N$ such that $Rn=\{f(n)\}$.  Hence if $f(n_1)=f(n_2)$ for
  some $n_1, n_2\in N$ then $rn_1=rn_2$ for all $r\in R$, so that
  $n_1\sim n_2$ and thus $n_1=n_2$.  Now for any $n\in N$ and $r,s\in
  R$ it holds that $f(f(n)) = r(f(n)) = r(sn) = (rs)n = f(n)$, so that
  $f(n)=n$ follows.  Thus for every $n\in N$ it holds that $Rn=\{n\}$,
  which means that $(N,+)$ is id-quasitrivial.  So we have proven that
  $(M,+)$ is sub-irreducible.
\end{proof}

\begin{lemma}\label{|RR|>1}
  Let $(R,+,\cdot)$ be additively idempotent and $|R|>2$. Then it
  holds that $|RR|>1$.
\end{lemma}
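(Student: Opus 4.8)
The plan is to argue by contradiction, exploiting the standing hypothesis of this section that $(R,+,\cdot)$ is a finite simple semiring. Suppose $|RR|=1$, so that there is a single element $z\in R$ with $rs=z$ for all $r,s\in R$. The key observation is that, under this assumption, every semigroup congruence of $(R,+)$ is already a semiring congruence of $(R,+,\cdot)$: if $x\sim y$ then $xt=z=yt$ and $tx=z=ty$ for all $t\in R$, so the two multiplicative compatibility conditions hold trivially. Thus it suffices to produce one equivalence relation on $R$ that is a congruence of the semilattice $(R,+)$ and is distinct from both $\id_R$ and $R\times R$; simplicity is then violated, forcing $|RR|>1$.

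To construct such a congruence I would use additive idempotency and finiteness: $(R,+)$ is a finite semilattice, hence has a greatest element $\infty$, and since $|R|>2$ there is an element strictly below $\infty$; picking a maximal such element gives a lower neighbor $c$ of $\infty$, so that $c^\uparrow=\{c,\infty\}$. Define $\theta:=\id_R\cup\{(c,\infty),(\infty,c)\}$. This is visibly an equivalence relation; it differs from $\id_R$ because $c\ne\infty$, and it differs from $R\times R$ because $|R|\ge 3$ provides a third element outside $\{c,\infty\}$. It remains only to check additive compatibility of $\theta$, and the sole nontrivial case is to compare $c+w$ with $\infty+w=\infty$: since $c+w\ge c$ and $c^\uparrow=\{c,\infty\}$, we get $c+w\in\{c,\infty\}$, and in either case $(c+w,\infty)\in\theta$.

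Putting the two steps together, $\theta$ is a semiring congruence of $(R,+,\cdot)$ distinct from $\id_R$ and $R\times R$, contradicting simplicity; hence $|RR|>1$. I do not anticipate a real obstacle here — the only point requiring mild care is verifying $c^\uparrow=\{c,\infty\}$ for the chosen coatom $c$, from which the additive compatibility of $\theta$ is immediate, everything else following directly from finiteness and additive idempotency.
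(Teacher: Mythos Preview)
Your argument is correct and follows essentially the same approach as the paper: both reduce the problem, via the observation that $|RR|=1$ makes every additive congruence a semiring congruence, to exhibiting a nontrivial congruence on the finite semilattice $(R,+)$. The only cosmetic difference is the choice of that congruence---the paper collapses the two blocks $x_\downarrow$ and $R\setminus x_\downarrow$ for an arbitrary $x\ne\infty$, whereas you collapse just the pair $\{c,\infty\}$ for a coatom $c$; both work for the same reason.
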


\begin{proof}
  Let $x\in R\setminus\{\infty\}$ and let $\sim$ be the equivalence
  relation on $R$ with the equivalence classes $x_\downarrow$ and
  $R\setminus x_\downarrow$. It is easy to check that $\sim $ is a
  nontrivial congruence of the semigroup $(R,+)$. If $|RR|=1$ would
  hold, then every equivalence relation on $R$ would be a congruence
  of $(R,\cdot)$. Consequently, $\sim$ would be a nontrivial
  congruence of $(R,+,\cdot)$, contradicting the simplicity of
  $(R,+,\cdot)$.
\end{proof}

By this last result, when considering a semiring $(R,+,\cdot)$ that
fulfills $|R|>2$ and ${|RR|>1}$, we can drop the condition $|RR|>1$ in
the case of an additively idempotent semiring.

\begin{lemma}\label{lemma_M_is_idempotent}
  Let $(R,+,\cdot)$ be additively idempotent, $|R|>2$, and let $(M,+)$
  be a faithful $R$-semimodule of smallest cardinality. Then $(M,+)$
  is idempotent.
\end{lemma}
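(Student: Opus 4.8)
The goal is to show that a faithful $R$-semimodule $(M,+)$ of smallest cardinality over a finite additively idempotent simple semiring $(R,+,\cdot)$ with $|R|>2$ must itself be idempotent, i.e.\ $x+x=x$ for every $x\in M$. The natural approach is to produce, from $(M,+)$, a \emph{new} $R$-semimodule that is idempotent and still faithful but has cardinality at most $|M|$; minimality then forces $(M,+)$ itself to be idempotent. The obvious candidate for this quotient is the semimodule congruence $\approx$ on $(M,+)$ generated by the pairs $(x+x,x)$ for all $x\in M$; the quotient $(M/\!\!\approx,+)$ is then idempotent by construction, and it remains only to show it is faithful and of cardinality $\le|M|$ (the latter being automatic for any quotient).

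\textbf{Key steps.}
First I would recall that, by Lemma~\ref{lemma_existence_of_faithful_semimod} and Lemma~\ref{|RR|>1}, the hypotheses guarantee $|RR|>1$, so all the results of Section~\ref{chapter_existence} apply to $(M,+)$; in particular, by Lemma~\ref{lemma_Ra=M} there is $a\in M$ with $Ra=M$, and every proper subsemimodule of $(M,+)$ is quasitrivial. Second, I would analyze the map $x\mapsto x+x$ on $M$: since $r(x+x)=rx+rx$ for all $r\in R$, the set $D:=\{x+x\mid x\in M\}$ together with the relation "$x\mapsto x+x$" interacts well with the $R$-action. Consider the subsemimodule $2M:=M+M=\{x+y\mid x,y\in M\}$; it is a subsemimodule of $(M,+)$. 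If $2M\subsetneq M$, then $2M$ is quasitrivial, so $|Rm|=1$ for all $m\in 2M$; but then for $a$ with $Ra=M$ we would get, using $2a\in 2M$, that $R(2a)$ is a singleton, while $R(2a)=\{r(a+a)\mid r\in R\}=\{ra+ra\mid r\in R\}$, and one shows this pins down $M$ to be trivial or forces a contradiction with $|M|\ge|R|/\!\sim$ — more cleanly, $2M\subsetneq M$ combined with $Ra=M$ should be pushed to a contradiction via Lemma~\ref{lemma_ra=sa}, since $Ra=M$ means $M$ is cyclic and hence $R(x+x)$ relates to $Rx$. Third, assuming $2M=M$ (the expected generic situation), I would pass to the congruence $\approx$ generated by all $(x+x,x)$, check that the quotient is an idempotent $R$-semimodule, and — this is the crux — check faithfulness.

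\textbf{The main obstacle.}
The hard part is faithfulness of the idempotent quotient $(M/\!\!\approx,+)$. A priori, collapsing $x+x$ to $x$ for all $x$ could force the homomorphism $T:R\to\End(M/\!\!\approx)$ to become non-injective; if it does, then $M/\!\!\approx$ is quasitrivial (since $R$ is simple), and one must derive a contradiction with the faithfulness and minimality of $M$. The idea to resolve this: if $T$ fails to be injective on the quotient, then $ry\approx sy$ for all $r,s\in R$ and all $y\in M$; unravelling the explicit description of $\approx$ (two elements are $\approx$-related iff they have the same "idempotent core" under iterating $x\mapsto 2x$, which stabilizes in a finite semilattice-like way — here one uses that in a finite commutative semigroup every element has a power that is idempotent) should show that $rx$ and $sx$ already have a controlled relationship in $M$ itself, and then using $Ra=M$ from Lemma~\ref{lemma_Ra=M} together with Lemma~\ref{lemma_ra=sa} one contradicts $|M|>1$. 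Concretely, I expect the argument to run: either $M=2M$ and the map $x\mapsto 2x$ is a bijection (so $M$ was idempotent-equivalent in a way that forces genuine idempotence after all), or one of the proper-subsemimodule-is-quasitrivial facts bites. The cleanest route may actually avoid quotients entirely: show directly that $x\mapsto x+x$ is the identity on $M$ by noting that $N:=\{x\in M\mid x+x\ne x\}$, if nonempty, leads via the subsemimodule it generates or via a suitably defined congruence to a violation of Proposition~\ref{prop_M_is_irreducible}; I would try this congruence-free version first and fall back on the quotient construction if the bookkeeping is cleaner there.
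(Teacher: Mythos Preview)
You have cited the decisive fact --- Lemma~\ref{lemma_Ra=M}, giving an element $a\in M$ with $Ra=M$ --- and then walked right past the one-line conclusion it yields. Once you know $M=Ra$, every $b\in M$ can be written as $b=ra$ for some $r\in R$, and then
\[
  b+b = ra+ra = (r+r)a = ra = b
\]
simply because $(R,+)$ is idempotent. That is the entire proof in the paper; no quotients, no congruences, no analysis of $2M$ or of the map $x\mapsto x+x$ is needed.

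Your proposed route via the congruence $\approx$ generated by all pairs $(x+x,x)$ is not wrong in spirit, but it is vastly more work than required, and as you yourself flag, the faithfulness of $M/\!\!\approx$ is a genuine obstacle that you do not resolve. The sketch you give for overcoming it (unwinding $\approx$ explicitly, invoking stabilization of $x\mapsto 2x$, then pulling back to $M$ via Lemma~\ref{lemma_ra=sa}) is plausible but incomplete, and would need real care to make rigorous. All of this is bypassed by the direct computation above: the additive idempotence of $R$ transfers to $M$ \emph{immediately} along any cyclic generator.
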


\begin{proof}
  Lemma~\ref{lemma_Ra=M} yields the existence of an element $a\in M$
  with $Ra=M$. Let $b\in M$. Then there exists $r\in R$ with $ra=b$
  and it follows that $b+b=ra+ra=(r+r)a=ra=b$. Thus, $(M,+)$ is
  idempotent.
\end{proof}

\begin{proposition}\label{existence_of_irreducible_semimod}
  Let $(R,+,\cdot)$ be additively idempotent with $|R|>2$.  Then there
  exists a finite idempotent irreducible $R$-semimodule.
\end{proposition}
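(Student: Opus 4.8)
The plan is to simply assemble the chain of results established earlier in this section. Recall that throughout the section $(R,+,\cdot)$ is assumed finite and simple; the proposition adds that it is additively idempotent with $|R|>2$. The first step is to discharge the side hypothesis $|RR|>1$ that all the preceding lemmas carry: this is exactly Lemma~\ref{|RR|>1}, which says that an additively idempotent simple semiring with $|R|>2$ automatically satisfies $|RR|>1$. So from now on both $|RR|>1$ and $|R|>2$ are available.

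Next I would invoke Lemma~\ref{lemma_existence_of_faithful_semimod} to obtain a faithful $R$-semimodule $(M,+)$ of smallest cardinality, with $|M|\le|R|$; in particular $(M,+)$ is finite since $R$ is. Then Proposition~\ref{prop_M_is_irreducible}, applied with this $(M,+)$, gives that $(M,+)$ is irreducible (both sub- and quotient-irreducible). Finally, Lemma~\ref{lemma_M_is_idempotent}, whose hypotheses are again met, shows that $(M,+)$ is idempotent. Putting these together, $(M,+)$ is a finite idempotent irreducible $R$-semimodule, which is the claim.

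Since every ingredient has already been proved, there is no real obstacle here — the proposition is just the convenient packaging of Lemmas~\ref{lemma_existence_of_faithful_semimod}, \ref{lemma_M_is_idempotent}, \ref{|RR|>1} and Proposition~\ref{prop_M_is_irreducible}. The only point demanding a moment's care is that one must first apply Lemma~\ref{|RR|>1} to licence the use of all the other statements, which are phrased under the standing assumption $|RR|>1$; everything else is immediate.
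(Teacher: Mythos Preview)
Your proposal is correct and matches the paper's proof essentially verbatim: the paper likewise invokes Lemma~\ref{lemma_existence_of_faithful_semimod} to get a faithful $R$-semimodule of smallest cardinality, then Proposition~\ref{prop_M_is_irreducible} for irreducibility and Lemma~\ref{lemma_M_is_idempotent} for idempotence. Your explicit mention of Lemma~\ref{|RR|>1} (and of finiteness via $|M|\le|R|$) just makes transparent what the paper has already remarked can be taken for granted.
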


\begin{proof}
  By Lemma~\ref{lemma_existence_of_faithful_semimod} there exists a
  faithful $R$-semimodule of smallest cardinality, which is
  irreducible by Proposition~\ref{prop_M_is_irreducible} and
  idempotent by Lemma~\ref{lemma_M_is_idempotent}.
\end{proof}

%-----------

\subsection{Properties of idempotent sub-irreducible 
  semimodules}%
\label{section_properties}

Let throughout this section $(R,+,\cdot)$ be a finite simple
additively idempotent semiring with $|R|>2$ and $(M,+)$ a finite
idempotent sub-irreducible $R$-semi\-module.  We will study the
properties of the $R$-semimodule $(M,+)$, depending on the properties
of $(R,+,\cdot)$ ($\infty_R$ is absorbing, $0_R$ exists and is left
absorbing etc.). These properties are needed to describe the embedding
of $(R,+,\cdot)$ into $(\JM(\bL),\vee,\circ)$ for a suitable
semilattice $\bL$, what will be done in
Section~\ref{section_embedding}.

We start with a preliminary lemma.

\begin{lemma}
  Let $(S,+,\cdot)$ be an additively idempotent semiring and $(N,+)$
  an idempotent $S$-semimodule.  For all $r,s\in S$ and $x,y\in N$ it
  holds:
  \begin{displaymath}
    x\le y \ \Rightarrow\ rx\le ry \qquad \text{and} \qquad
    r\le s \ \Rightarrow\ rx\le sx \:.
  \end{displaymath}
\end{lemma}

\begin{proof}
  If $x\le y$, i.e., $x+y=y$, then $rx+ry=r(x+y)=ry$, i.e., $rx\le
  ry$.  Similarly, if $r\le s$, i.e., $r+s=s$, then $rx+sx=(r+s)x=sx$,
  i.e. $rx\le sx$.
\end{proof}

\begin{lemma}\label{lemma_a=b}
  Let $M$ be an idempotent sub-irreducible $R$-semimodule, and let
  $a,b\in M$ such that $Ra=\{b\}$.  Then $a=b$, and $a$ is either an
  absorbing or a neutral element of $(M,+)$.
\end{lemma}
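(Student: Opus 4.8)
The plan is to exploit sub-irreducibility via the subsemimodule $(Ra,+)$. First I would observe that $Ra = \{b\}$ means $(Ra,+)$ is a trivial, hence quasitrivial, proper-or-not subsemimodule; in any case $ra = b$ for every $r \in R$. Applying the $R$-multiplication once more, $r(sa) = (rs)a = b$ for all $r,s$, so $rb = b$ for all $r \in R$, i.e. $Rb = \{b\}$ as well. To get $a = b$, I would consider the subsemigroup generated by $\{a,b\}$ — more precisely the set $N := \{a, b, a+b\}$ (which collapses depending on the order relation between $a$ and $b$). Since $Ra = Rb = \{b\}$ and $R(a+b) = Ra + Rb = \{b\}$, we have $RN \subseteq N$, so $(N,+)$ is a subsemimodule. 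If $a \neq b$ then $N$ is a proper subsemimodule (it cannot be all of $M$ unless $|M| \le 3$, but more to the point it is quasitrivial, not id-quasitrivial: $ra = b \neq a$), contradicting sub-irreducibility which forces every proper subsemimodule to be id-quasitrivial. Hence $a = b$.

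Wait — I should be careful that $N$ is \emph{proper}. If $N = M$, then $M$ itself has $Rx = \{b\}$ for all $x \in M$, so $M$ is quasitrivial, contradicting that a sub-irreducible semimodule is by definition non-quasitrivial. So $N \subsetneq M$, and the argument goes through: $(N,+)$ is a proper subsemimodule, hence id-quasitrivial, so $ra = a$ for all $r$, giving $b = a$.

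Now knowing $a = b$ and $Ra = \{a\}$, I would show $a$ is absorbing or neutral in $(M,+)$. Fix any $y \in M$. The goal is to compare $a$ with $a + y$. By the preceding lemma (the one just before this statement), multiplication is order-preserving, and since $a \le a+y$ we get $ra \le r(a+y)$, i.e. $a \le r(a+y)$ for all $r$. I would set $N' := \{\,z \in M : Rz = \{a\}\,\} \cup \{a\}$ — actually cleaner: let $N' := \{z \in M : rz = a \text{ for all } r \in R\}$. This set is closed under $+$ (if $rz = rz' = a$ then $r(z+z') = a+a = a$ by idempotency) and under $R$-multiplication ($r(sz) = (rs)z = a$), so $(N',+)$ is a subsemimodule containing $a$. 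Either $N' = M$, in which case $rz = a$ for all $z,r$, so in particular taking $r$ and using $r(z) = a$ together with... hmm, that just says $a$ is absorbing if additionally $a$ is in the image appropriately — let me instead argue: if $N' = M$ then for every $y$, $ry = a$, and also $r(a+y) = a$; I still need $a + y = a$. The honest route: $N'$ is a subsemimodule, so it is either id-quasitrivial (forcing $rz = z$, but $rz = a$, so $z = a$, i.e. $N' = \{a\}$) or equal to $M$. In the first case $Ra = \{a\}$ only — no immediate contradiction, so I must handle both.

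For the dichotomy, I would distinguish: consider whether $\infty_M \in N'$ where $\infty_M$ is the greatest element. If $N' = M$ then $rx = a$ for all $x$, so $(M,+)$ is quasitrivial — contradiction again. Hence $N' = \{a\}$. Now take arbitrary $y \in M$ and look at $a + y$: we have $R(a+y) = Ra + Ry = \{a\} + Ry$. Also $Ry$ is a subsemimodule $(Ry,+)$; if it is proper it is id-quasitrivial so $Ry = \{y\}$... this is getting long. I expect the \textbf{main obstacle} to be exactly this final case-split: cleanly deducing that $a$ is \emph{either} absorbing \emph{or} neutral, rather than proving one specific alternative. The key dichotomy should come from comparing $a$ with the greatest element $\infty_M$: if $a = \infty_M$ then since $x \le \infty_M = a$ for all $x$ we get... no. Rather, I suspect the clean argument is: for each $y$, since $a \le a + y$ and $Ra = \{a\}$ while $R(a+y) \supseteq \{a\}$, the element $a+y$ satisfies $r(a+y) \ge a$; one then shows $\{a+y : y \in M\} \cup \{a\}$ is a subsemimodule on which $R$ acts, and its id-quasitriviality forces $r(a+y) = a+y$, while comparing with neighbours of $a$ pins down whether $a$ absorbs from below ($a + y = a$ for $y \le$ something, giving $a$ neutral below) or absorbs everything ($a + y = a$, giving $a$ neutral overall — wait, neutral means $a + y = y$). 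I would therefore present the final step as: show $\{a\}^{\downarrow}$ or $\{a\}^{\uparrow}$ is forced by sub-irreducibility to be all of $M$, yielding $a = 0_M$ (neutral) or $a = \infty_M$ with $\infty_M$ absorbing, and verify absorption directly from $Rx = \{a\}$-type closure. The routine verifications (closure under $+$, under $R$-action, idempotency collapses) I would not write out in full.
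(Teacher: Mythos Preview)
Your argument for $a=b$ is correct and essentially the same as the paper's: you use the small subsemimodule $\{a,b,a+b\}$ where the paper uses $N=\{x\in M\mid Rx=\{b\}\}$, but both are proper (else $M$ is quasitrivial) and hence id-quasitrivial, forcing $ra=a$.

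For the second claim you correctly zero in on the sets $a_\downarrow$ and $a^\uparrow$ and state that one of them must equal $M$, but you never supply the argument for \emph{why} they cannot both be proper---and you yourself flag this as the ``main obstacle.''  This is a genuine gap.  Here is the missing idea from the paper: suppose both $a_\downarrow$ and $a^\uparrow$ are proper, hence id-quasitrivial.  Then one shows that $M\setminus a_\downarrow$ is \emph{also} an $R$-subsemimodule.  Closure under $+$ is immediate; the delicate point is closure under the $R$-action: if $x\not\le a$ but $rx\le a$ for some $r$, then $x+a\in a^\uparrow$, so by id-quasitriviality $x+a=r(x+a)=rx+ra=rx+a=a$, giving $x\le a$, a contradiction.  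Now $M\setminus a_\downarrow$ is a proper subsemimodule (since $a\notin M\setminus a_\downarrow$), hence id-quasitrivial, and since $M=a_\downarrow\cup(M\setminus a_\downarrow)$ the whole of $M$ would be id-quasitrivial, contradicting non-quasitriviality.

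Your other attempts in part two (the set $N'$, the set $\{a+y:y\in M\}$, etc.) either collapse back to things you already know ($N'=\{a\}$) or do not obviously yield the dichotomy; the complement trick above is what makes the argument close.
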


\begin{proof}
  Consider the set $N := \{x\in M\mid Rx=\{b\}\}$, which contains the
  element~$a$.  Then $(N,+)$ is a subsemimodule of $(M,+)$.  Indeed,
  let $x,y\in N$ and let $s\in R$.  For all $r\in R$ we have $r(x+y) =
  rx+ry = b+b = b$ and $r(sx) = (rs)x = b$, hence $x+y\in N$ and
  $sx\in N$, as desired.  Since $(M,+)$ is non-quasitrivial we have
  $|RM|>1$, so that $N\ne M$, and hence $(N,+)$ is id-quasitrivial.
  In particular, for $a\in N$ we have $Ra=\{a\}$, so that $a=b$.

  Now consider the sets $a_\downarrow$ and $a^\uparrow$, which form
  $R$-subsemimodules $(a_\downarrow,+)$ and $(a^\uparrow,+)$ of
  $(M,+)$.  We have to show that either $a_\downarrow\, = M$, in which
  case $a$ is an absorbing element, or $a^\uparrow\, = M$, in which
  case $a$ is a neutral element.

  Suppose then that $a_\downarrow\,\ne M$ and $a^\uparrow\,\ne M$.
  Since $(M,+)$ is sub-irreducible we have that $(a_\downarrow,+)$ and
  $(a^\uparrow,+)$ are id-quasitrivial.  We claim that $(M\setminus
  a_\downarrow,+)$ is an $R$-subsemimodule of $(M,+)$ as well.  Let
  $x,y\in M$, $x,y\notin a_\downarrow$ and let $r\in R$; then clearly
  $x+y\notin a_\downarrow$.  Suppose that $rx\in a_\downarrow$, i.e.,
  $rx\le a$.  Then, since $x+a\in a^\uparrow$, it holds that $x+a =
  r(x+a) = rx+ra = rx+a = a$, so that $x\le a$, contradicting $x\notin
  a_\downarrow$.  Hence $(M\setminus a_\downarrow,+)$ is an
  $R$-subsemimodule of $(M,+$), which is proper and thus
  id-quasitrivial.  From this and because $M = a_\downarrow \cup\,
  (M\setminus a_\downarrow)$ it follows that $(M,+)$ is
  id-quasitrivial, which contradicts a requirement for
  sub-irreducibility.
\end{proof}

\begin{corollary}\label{corollary_id-quasitrivial}
  Let $(N,+)$ be a proper, hence id-quasitrivial, subsemimodule of
  $(M,+)$.  If $(M,+)$ has no neutral element then $N = \{\infty\}$.
  If $(M,+)$ has a neutral element $0$ then it holds that $N\subseteq
  \{0,\infty\}$.
\end{corollary}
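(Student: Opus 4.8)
The plan is to deduce the statement almost immediately from Lemma~\ref{lemma_a=b}. First I would unwind the hypotheses: since $(M,+)$ is sub-irreducible and $(N,+)$ is a proper subsemimodule, $(N,+)$ is id-quasitrivial, which means precisely that $rn=n$ for every $r\in R$ and every $n\in N$; equivalently, $Rn=\{n\}$ for each $n\in N$.

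Next, for a fixed $n\in N$ I would apply Lemma~\ref{lemma_a=b} with $a=b=n$, the hypothesis $Ra=\{b\}$ being exactly $Rn=\{n\}$. The lemma then tells us that $n$ is either an absorbing or a neutral element of the semigroup $(M,+)$. Here I would record what these two possibilities mean concretely for the finite semilattice $(M,+)$: an absorbing element $x$ satisfies $x+y=x$ for all $y\in M$, i.e.\ $y\le x$ for all $y$, so $x$ must be the greatest element $\infty$; a neutral element $x$ satisfies $x+y=y$ for all $y$, i.e.\ $x\le y$ for all $y$, so $x$ must be the least element of $(M,+)$, which exists and equals $0$ exactly when $(M,+)$ has a neutral element. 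Hence every $n\in N$ equals $\infty$, or else is the least element of $(M,+)$ in the case where that exists.

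The conclusion then splits into the two stated cases. If $(M,+)$ has no neutral element, then no $n\in N$ can be neutral, so $n=\infty$ for every $n\in N$; since $N$ is nonempty (being a subsemigroup), this forces $N=\{\infty\}$. If $(M,+)$ has a neutral element $0$, then each $n\in N$ is either $\infty$ or $0$, giving $N\subseteq\{0,\infty\}$, as claimed.

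I do not expect a genuine obstacle here: all the real work has already been done in Lemma~\ref{lemma_a=b}. The only points needing a moment of care are the translation of \emph{absorbing}/\emph{neutral} for the semigroup $(M,+)$ into \emph{greatest}/\emph{least element} of the semilattice, and the (conventional) nonemptiness of the subsemimodule $N$, which is what upgrades the inclusion $N\subseteq\{\infty\}$ to an equality in the first case.
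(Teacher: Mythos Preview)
Your proof is correct and is precisely the argument implicit in the paper: the corollary is stated there without proof, immediately after Lemma~\ref{lemma_a=b}, because it follows at once by applying that lemma to each $n\in N$ (using id-quasitriviality to get $Rn=\{n\}$) and translating ``absorbing''/``neutral'' into ``greatest''/``least'' in the semilattice $(M,+)$.
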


\begin{proposition}\label{prop_Ra=M}
  There exists $a\in M$ with $Ra=M$.  Furthermore, if $x\in M$
  satisfies $Rx\ne M$ then either $x=\infty_M$ and $R\infty_M =
  \{\infty_M\}$, or a neutral element $0_M\in M$ exists, $x=0_M$, and
  $R0_M = \{0_M\}$.
\end{proposition}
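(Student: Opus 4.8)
The plan is to run the same kind of argument as in the proof of Lemma~\ref{lemma_Ra=M}, but using sub-irreducibility of $(M,+)$ in place of the minimal-faithfulness hypothesis. First note that Lemma~\ref{|RR|>1} gives $|RR|>1$ automatically here, so Lemma~\ref{lemma_ra=sa} is applicable throughout.

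For the existence statement I would argue by contradiction: suppose $Ra\subsetneq M$ for every $a\in M$. Each $(Ra,+)$ is a subsemimodule of $(M,+)$, proper by assumption, hence id-quasitrivial (since $(M,+)$ is sub-irreducible), and in particular quasitrivial. Lemma~\ref{lemma_ra=sa} then forces $|Ra|=1$, and Lemma~\ref{lemma_a=b} upgrades this to $Ra=\{a\}$ for every $a\in M$. This says $rx=x$ for all $r\in R$ and $x\in M$, i.e.\ $(M,+)$ is id-quasitrivial, hence quasitrivial --- contradicting the fact that a sub-irreducible semimodule is non-quasitrivial. Therefore some $a\in M$ satisfies $Ra=M$.

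For the second part, I would take $x\in M$ with $Rx\ne M$ and repeat the chain: $(Rx,+)$ is a proper, hence quasitrivial, subsemimodule, so $|Rx|=1$ by Lemma~\ref{lemma_ra=sa}, and then $Rx=\{x\}$ by Lemma~\ref{lemma_a=b}, which moreover tells us that $x$ is either an absorbing or a neutral element of $(M,+)$. It remains only to translate this into the order language of the finite semilattice $(M,+)$: an absorbing element is the greatest element $\infty_M$, so in that case $x=\infty_M$ and $R\infty_M=\{\infty_M\}$; a neutral element is the least element, so in that case $(M,+)$ possesses a neutral element $0_M=x$ with $R0_M=\{0_M\}$. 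These are exactly the two alternatives in the statement. I do not expect a genuine obstacle here --- Lemma~\ref{lemma_a=b} does the real work --- the only points needing care are invoking Lemma~\ref{lemma_ra=sa} via the implication ``proper subsemimodule $\Rightarrow$ quasitrivial'' and the harmless identification of absorbing and neutral elements with top and bottom in a finite semilattice.
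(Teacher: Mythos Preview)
Your proof is correct and follows essentially the same approach as the paper's: both use sub-irreducibility to conclude that $(Rx,+)$ is id-quasitrivial when $Rx\ne M$, apply Lemma~\ref{lemma_ra=sa} to get $|Rx|=1$, and then invoke Lemma~\ref{lemma_a=b} to identify $x$ as $\infty_M$ or $0_M$. The only cosmetic differences are that the paper proves the second statement first and then derives the existence claim from it, and it concludes the contradiction directly from quasitriviality (via $|Ra|=1$ for all $a$) rather than passing through $Ra=\{a\}$ and id-quasitriviality.
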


\begin{proof}
  Let $x\in M$ be such that $Rx\ne M$.  Then the subsemimodule
  $(Rx,+)$ of $(M,+)$ is id-quasitrivial, and by
  Lemma~\ref{lemma_ra=sa} it follows that $|Rx|=1$.  Now
  Lemma~\ref{lemma_a=b} implies that either $x=\infty$ and
  $R\infty_M=\{\infty_M\}$, or $x=0_M$ and $R0_M=\{0_M\}$, which
  proves the last statement.

  For the first statement, suppose that for every $a\in M$ it holds
  that ${Ra\ne M}$, so that $|Ra|=1$.  Then $(M,+)$ is
  quasitrivial, which is a contradiction.
\end{proof}

\begin{proposition}\label{prop_greatest_element}
  The following holds:
  \begin{enumerate}
  \item if $\infty_Rx\ne \infty_M$ for some $x\in M$, then $(M,+)$ has
    a neutral element $0_M$, $x=0_M$, and $R0_M=\{0_M\}$,%
    \label{infty_1}
  \item the element $\infty_R$ is right absorbing iff
    $R\infty_M=\{\infty_M\}$,\label{infty_2}
  \item if $\infty_R$ is not left absorbing then $0_M\in M$ and
    $R0_M=\{0_M\}$,\label{infty_3}
  \item if $\infty_R$ is left absorbing, $(M,+)$ is
    quotient-irreducible, $|M|>2$, and ${0_M\in M}$ then $R0_M =
    M$.\label{infty_4}
  \end{enumerate}
\end{proposition}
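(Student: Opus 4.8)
The plan is to reduce each statement about $\infty_R$ to a statement about its action $T_{\infty_R}\in\End(M,+)$ and to use that $(M,+)$, being sub-irreducible and hence non-quasitrivial, is faithful by Remark~\ref{remark_embed}; thus $T_{\infty_R s}=T_{\infty_R}$ (resp.\ $T_{s\infty_R}=T_{\infty_R}$) already yields $\infty_R s=\infty_R$ (resp.\ $s\infty_R=\infty_R$). The computational engine is the observation that for every $m\in M$ the element $\infty_R m=\sum_{r\in R}rm$ is the greatest element of the subsemimodule $(Rm,+)$: it lies in $Rm$ since $\infty_R\in R$, and dominates each $rm$ since $r\le\infty_R$. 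Hence $\infty_R m=\infty_M$ whenever $Rm=M$, and by Proposition~\ref{prop_Ra=M} the only other possibilities are $m=\infty_M$ with $R\infty_M=\{\infty_M\}$ or $m=0_M$ with $R0_M=\{0_M\}$. Almost every verification below is a walk through these three cases.

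For (\ref{infty_1}): if $\infty_R x\ne\infty_M$ then $\infty_R x=\sup(Rx)\ne\infty_M$, so $Rx\ne M$ and Proposition~\ref{prop_Ra=M} applies; the branch $x=\infty_M$, $R\infty_M=\{\infty_M\}$ would give $\infty_R x=\infty_R\infty_M=\infty_M$, a contradiction, so the other branch holds, which is exactly the assertion. For (\ref{infty_2}), if $R\infty_M=\{\infty_M\}$ then $s(\infty_R m)=\infty_R m$ for all $m$ (case $Rm=M$: $\infty_R m=\infty_M$ and $s\infty_M=\infty_M$; cases $m=\infty_M$ or $m=0_M$: immediate from $R\infty_M=\{\infty_M\}$, resp.\ $R0_M=\{0_M\}$), so $T_{s\infty_R}=T_{\infty_R}$ and $\infty_R$ is right absorbing. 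Conversely, if $\infty_R$ is right absorbing then either $R\infty_M\ne M$, whence $R\infty_M=\{\infty_M\}$ by Proposition~\ref{prop_Ra=M}, or $R\infty_M=M$, whence $\infty_R\infty_M=\sup(R\infty_M)=\infty_M$ and every $m=r\infty_M=r(\infty_R\infty_M)=(r\infty_R)\infty_M=\infty_R\infty_M=\infty_M$, contradicting $|M|>1$.

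For (\ref{infty_3}) I would first prove the contrapositive of ``$0_M$ exists'': if $(M,+)$ has no neutral element, then by Proposition~\ref{prop_Ra=M} every $m$ satisfies $Rm=M$ or ($m=\infty_M$ and $R\infty_M=\{\infty_M\}$), and in both cases $\infty_R(sm)=\infty_R m$ for all $s$; hence $T_{\infty_R s}=T_{\infty_R}$ and $\infty_R$ is left absorbing. So a non-left-absorbing $\infty_R$ forces a neutral element $0_M$. It remains to exclude $R0_M=M$: if $R0_M=M$ then $\infty_R 0_M=\infty_M$, and since $0_M$ is least, $\infty_R m\ge\infty_R 0_M=\infty_M$ for all $m$, so $T_{\infty_R}$ is the constant map $\infty_M$; applying the same with $sm$ in place of $m$ gives $T_{\infty_R s}=T_{\infty_R}$, so $\infty_R$ is left absorbing, a contradiction. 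Hence $R0_M\ne M$, and Proposition~\ref{prop_Ra=M} gives $R0_M=\{0_M\}$.

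Part (\ref{infty_4}) is the substantial one, and where I expect the main difficulty: one must hit on the right auxiliary construction. Assume $\infty_R$ left absorbing, $0_M\in M$, $(M,+)$ irreducible, $|M|>2$, and suppose $R0_M\ne M$, so $R0_M=\{0_M\}$ by Proposition~\ref{prop_Ra=M}. Consider $\phi:=T_{\infty_R}$. It is additive, and since $\infty_R s=\infty_R$ we have $\phi(sm)=\infty_R(sm)=(\infty_R s)m=\infty_R m=\phi(m)$, so $m\sim m':\Leftrightarrow\phi(m)=\phi(m')$ is a semimodule congruence; by quotient-irreducibility $\sim$ is $\id_M$ or $M\times M$. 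If $\sim=M\times M$ then $\phi$ is constant with value $\phi(0_M)=\infty_R 0_M=0_M$, so $rm\le\infty_R m=0_M$ for all $r,m$, i.e.\ $RM=\{0_M\}$, contradicting non-quasitriviality. If $\sim=\id_M$ then $\phi$ is injective, hence bijective since $M$ is finite; as $\infty_R$ is left absorbing, $\infty_R^2=\infty_R$, so $\phi^2=\phi$, and a bijection satisfying $\phi^2=\phi$ is the identity; thus $T_{\infty_R}=\id_M$, whence $T_s=T_{\infty_R}\circ T_s=T_{\infty_R s}=T_{\infty_R}=\id_M$ for every $s$, making $(M,+)$ id-quasitrivial and contradicting sub-irreducibility. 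Therefore $R0_M=M$. The only genuinely clever point is noticing that left-absorbency of $\infty_R$ makes $\phi$ idempotent, which is exactly what converts quotient-irreducibility into a contradiction; Parts (\ref{infty_1})--(\ref{infty_3}) are bookkeeping over Proposition~\ref{prop_Ra=M} and faithfulness.
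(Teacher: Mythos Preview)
Your proof is correct. Parts (\ref{infty_1})--(\ref{infty_3}) follow the paper's arguments closely; the only difference is that the paper handles (\ref{infty_3}) in one stroke---if $\infty_Rx=\infty_M$ for all $x$ then $(\infty_Rr)x=\infty_R(rx)=\infty_M=\infty_Rx$ and faithfulness gives left-absorbency, so its negation combined with (\ref{infty_1}) yields both conclusions simultaneously---whereas you split into two separate contrapositives.

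For part (\ref{infty_4}) you take a genuinely different route. After assuming $R0_M=\{0_M\}$, the paper observes via (\ref{infty_1}) that $\infty_Rx=\infty_M$ for every $x\ne0_M$, and then that $\infty_R(rx)=(\infty_Rr)x=\infty_Rx=\infty_M$ forces $rx\ne0_M$; thus $\{0_M\}$ and $M\setminus\{0_M\}$ are the classes of a nontrivial semimodule congruence, contradicting quotient-irreducibility directly (and this is where $|M|>2$ is used). Your kernel congruence of $\phi=T_{\infty_R}$ is in fact the \emph{same} relation, since $\phi^{-1}(0_M)=\{0_M\}$ and $\phi^{-1}(\infty_M)=M\setminus\{0_M\}$ by (\ref{infty_1}); so the dichotomy $\sim\in\{\id_M,M\times M\}$ is already violated the moment you write it down. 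You do not notice this and instead argue each branch separately via the idempotency $\phi^2=\phi$, which is a pleasant alternative and, as a bonus, never invokes the hypothesis $|M|>2$.
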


\begin{proof}
  \ref{infty_1}.: If $x\in M$ satisfies $Rx=M$ then there exists $r\in
  R$ with $rx=\infty_M$ and hence $\infty_Rx \ge rx = \infty_M$.
  Thus the statement follows from Proposition~\ref{prop_Ra=M}.

  \ref{infty_2}.: If $\infty_R$ is right absorbing then $r\infty_M =
  r(\infty_R\infty_M) = (r\infty_R)\infty_M = \infty_R\infty_M =
  \infty_M$ for all $r\in R$.  Conversely, if $R\infty_M =
  \{\infty_M\}$, let $x\in M$.  If $\infty_Rx = \infty_M$ holds then
  $(r\infty_R)x = r(\infty_Rx) = r\infty_M = \infty_M = \infty_Rx$.
  Otherwise, by~\ref{infty_1}.\ we must have $0_M\in M$, $x=0_M$,
  and $R0_M=\{0_M\}$, and thus $(r\infty_R)0_M = r(\infty_R 0_M) =
  r0_M = 0_M = \infty_R 0_M$.  Hence, $(r\infty_R)x = \infty_Rx$ for
  all $x\in M$, so that, since $(M,+)$ is faithful, $\infty_R$ is
  right absorbing.

  \ref{infty_3}.: If $\infty_Rx = \infty_M$ for all $x\in M$ then
  $(\infty_Rr)x = \infty_R(rx) = \infty_R = \infty_Rx$ for all $x\in
  M$, and, since $(M,+)$ is faithful, $\infty_R$ would be
  left absorbing, contradicting the precondition.  Therefore,
  by~\ref{infty_1}.\ we have $0_M\in M$ and $R0_M = \{0_M\}$.

  \ref{infty_4}.: Assume on the contrary that $R0_M\ne M$, so that
  $R0_M = \{0_M\}$ by Proposition~\ref{prop_Ra=M}, and hence
  $\infty_R0_M = 0_M$.  By~\ref{infty_1}., for all $x\in
  M\setminus\{0\}$ we have $\infty_R = \infty_Rx = (\infty_Rr)x =
  \infty_R(rx)$, so that $rx\ne 0$.  It follows easily that
  the equivalence relation on $M$ with classes $\{0_M\}$ and
  $M\setminus\{0_M\}$ is a nontrivial semimodule congruence,
  contradicting the quotient-irreducibility.
\end{proof}

\begin{proposition}\label{prop_semirings_with_neutral_element}
  Let $(R,+)$ have a neutral element $0_R$. Then $(M,+)$ has a neutral
  element.
\end{proposition}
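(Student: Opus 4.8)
The plan is to exhibit an explicit least element of the semilattice $(M,\le)$; such an element is automatically an additive neutral element of $(M,+)$. The natural candidate is $0_R a$, where $a\in M$ is chosen so that $Ra=M$.

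First I would apply Proposition~\ref{prop_Ra=M} to obtain an element $a\in M$ with $Ra=M$. Next, observe that $0_R$ is the least element of the ordered set $(R,\le)$: indeed, $0_R$ being additively neutral means $0_R+r=r$ for every $r\in R$, which is precisely $0_R\le r$. Now apply the monotonicity lemma stated at the beginning of Section~\ref{section_properties} (with $S=R$ and the $R$-semimodule $N=M$) to the inequality $0_R\le r$; this gives $0_R a\le ra$ for every $r\in R$.

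Finally I would use $Ra=M$: given an arbitrary $b\in M$, choose $r\in R$ with $ra=b$; then $0_R a\le ra=b$. Hence $0_R a$ is a lower bound for all of $M$, i.e.\ the least element of $(M,\le)$, and therefore a neutral element of $(M,+)$, which is what we wanted.

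I do not expect a genuine obstacle here. The only point to verify is that the hypotheses of the invoked results are satisfied: $(M,+)$ is finite, idempotent and sub-irreducible, and $|R|>2$ — exactly the standing assumptions of this section — so both Proposition~\ref{prop_Ra=M} and the monotonicity lemma apply directly. Note that the argument uses only additive neutrality of $0_R$, not that $0_R$ is absorbing or any further property.
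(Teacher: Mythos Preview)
Your proposal is correct and follows essentially the same route as the paper: pick $a\in M$ with $Ra=M$ via Proposition~\ref{prop_Ra=M}, use $0_R\le r$ and monotonicity to get $0_R a\le ra$ for all $r$, and conclude that $0_R a$ is the least (hence neutral) element of $(M,+)$. The paper's proof is just a more compressed version of exactly this argument.
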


\begin{proof}
  By Proposition~\ref{prop_Ra=M} there exists $a\in M$ with $Ra=M$.
  For all $r\in R$ it holds that $0_R\le r$ and thus $0_Ra\le ra$,
  therefore $0_Ra$ is the least element, i.e., the neutral element, in
  $(M,+)$.
\end{proof}

\begin{proposition}\label{prop_has_a_zero}
  Let $\infty_R$ be neither left nor right absorbing.  Then
  $(R,+,\cdot)$ has a zero.
\end{proposition}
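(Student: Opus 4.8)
The plan is to exhibit a single element $r\in R$ that annihilates $M$, i.e.\ with $rx=0_M$ for every $x\in M$, and then to show that any such $r$ must be the zero of $(R,+,\cdot)$. Throughout I use that $(M,+)$ is faithful: being sub-irreducible it is non-quasitrivial, and $(R,+,\cdot)$ is simple, so by Remark~\ref{remark_embed} distinct elements of $R$ act differently on $M$.

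First I would read off the consequences of the hypothesis via Proposition~\ref{prop_greatest_element}. Since $\infty_R$ is not left absorbing, part~\ref{infty_3} gives that $(M,+)$ has a neutral element $0_M$ with $R0_M=\{0_M\}$. Since $\infty_R$ is not right absorbing, part~\ref{infty_2} gives $R\infty_M\ne\{\infty_M\}$, so there is $s\in R$ with $s\infty_M<\infty_M$; in particular $M$ has at least two elements, hence $0_M\ne\infty_M$.

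The crucial step is the identity $R\infty_M=M$. Indeed, if $R\infty_M\ne M$, then Proposition~\ref{prop_Ra=M} applied to $x=\infty_M$ would force $R\infty_M=\{\infty_M\}$ (its other alternative requires $\infty_M=0_M$, which is excluded), contradicting the existence of $s$. So $R\infty_M=M$, and in particular $0_M\in R\infty_M$; pick $r\in R$ with $r\infty_M=0_M$. As $(M,+)$ is idempotent and $(R,+,\cdot)$ additively idempotent, the $R$-action is isotone, so from $x\le\infty_M$ we get $rx\le r\infty_M=0_M$, whence $rx=0_M$ for all $x\in M$.

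It then remains to check that $r$ is the zero, which is routine. For any $t\in R$ and all $x\in M$ one has $(r+t)x=rx+tx=0_M+tx=tx$, $(rt)x=r(tx)=0_M=rx$, and $(tr)x=t(rx)=t0_M=0_M=rx$ (the last using $R0_M=\{0_M\}$); by faithfulness these give $r+t=t$, $rt=r$, and $tr=r$, so $r$ is additively neutral and (left and right) absorbing, i.e.\ the zero of $(R,+,\cdot)$. The only non-mechanical point is the equality $R\infty_M=M$ --- equivalently, that $0_M$ already lies in the orbit $R\infty_M$ of the top element --- and this falls straight out of Propositions~\ref{prop_greatest_element} and~\ref{prop_Ra=M}, so I do not expect a real obstacle.
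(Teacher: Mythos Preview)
Your proof is correct and follows essentially the same route as the paper's: use Proposition~\ref{prop_greatest_element} parts~\ref{infty_3} and~\ref{infty_2} together with Proposition~\ref{prop_Ra=M} to get $R\infty_M=M$, pick $r$ with $r\infty_M=0_M$, deduce $rx=0_M$ for all $x$ by monotonicity, and then verify via faithfulness that $r$ is additively neutral and two-sided absorbing. Your treatment is in fact slightly more explicit than the paper's (you spell out why $0_M\ne\infty_M$ and give the computation $(r+t)x=tx$ for additive neutrality), but there is no substantive difference.
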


\begin{proof}
  Since $\infty_R$ is not left absorbing it follows from
  Proposition~\ref{prop_greatest_element}-\ref{infty_3}.\ that $(M,+)$
  has a neutral element $0_M$ such that $R0_M=\{0_M\}$.  Since
  $\infty_R$ is not right absorbing,
  Proposition~\ref{prop_greatest_element}-\ref{infty_2}.\ implies
  $R\infty_M\ne\{\infty_M\}$ and Proposition~\ref{prop_Ra=M} implies
  $R\infty_M=M$.  Hence, there exists $r\in R$ such that $r\infty_M =
  0_M$.  Then we have $rx\le r\infty_M = 0_M$ for all $x\in M$.  Since
  $(M,+)$ is faithful, $r = 0_R$ must be the neutral element of
  $(R,+)$, and $0_Rx = 0_M$ for all $x\in M$.

  For all $r\in R$ and $x\in M$ it follows $(0_Rr)x = 0_R(rx) = 0_M =
  0_Rx$ and $(r0_R)x = r(0_Rx) = r0_M = 0_M = 0_Rx$, so that, since
  $(M,+)$ is faithful, $0_Rr = 0_R = r0_R$.  Hence, $0_R$ is left and
  right absorbing and therefore a zero.
\end{proof}

% If $\infty_R$ is left but not right absorbing and $0_M\in M$ then
% $R0_M = M$, as otherwise $R0_M = \{0_M\}$, and as above it follows
% that $(R,+,\cdot)$ has a zero, which is a contradiction.

%-----------

\subsection{Density results for idempotent irreducible 
  semimodules}

Let first $(R,+,\cdot)$ be any semiring.

\begin{lemma}\label{lemma_uv}
  Let $(M,+)$ be an idempotent $R$-semimodule and let $u,v\in M$ be
  such that $u$ is minimal, $u<v$, and $v\ne u+x$ for all $x\in
  M\setminus\{v\}$.  Define the (reflexive, symmetric) relation $\rho$
  on $M$ by
  \[ a \,\rho\, b \quad :\Leftrightarrow \quad \forall r\in R:\ \{ra,rb\}
  \ne \{u,v\} \,, \] and let $\sim$ be the transitive hull of $\rho$.
  Then the equivalence relation $\sim$ is a congruence on $M$.
\end{lemma}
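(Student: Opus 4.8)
The plan is to verify directly that $\sim$ respects both semimodule operations. Since $\sim$ is by construction the transitive hull of the reflexive symmetric relation $\rho$, it is already an equivalence relation, so the only thing to check is compatibility with $+$ and with the $R$-multiplication. Because compatibility of a transitive hull follows once one knows that $\rho$ itself is ``compatible up to $\sim$'' (i.e., $a\,\rho\,b$ implies $a+z\sim b+z$ and $ra\sim rb$), it suffices to prove these two implications for $\rho$; the general case then follows by chaining along a $\rho$-path and using transitivity of $\sim$. So the core of the argument is two claims: (i) if $a\,\rho\,b$ then $ra\,\rho\,rb$ for every $r\in R$ (this is the easy one), and (ii) if $a\,\rho\,b$ then $a+z\sim b+z$ for every $z\in M$.

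For (i): suppose $a\,\rho\,b$ and fix $r,s\in R$. We must rule out $\{s(ra),s(rb)\}=\{u,v\}$. But $s(ra)=(sr)a$ and $s(rb)=(sr)b$, so this set equals $\{(sr)a,(sr)b\}$, which is not $\{u,v\}$ because $a\,\rho\,b$ (applied to the element $sr\in R$). Hence $ra\,\rho\,rb$, which in particular gives $ra\sim rb$.

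For (ii), which I expect to be the main obstacle, suppose $a\,\rho\,b$ and fix $z\in M$; I want $a+z\sim b+z$. Assume $a+z\not\rho\,b+z$, so there is $r\in R$ with $\{r(a+z),r(b+z)\}=\{ra+rz,\,rb+rz\}=\{u,v\}$. Say $ra+rz=u$ and $rb+rz=v$ (the other case is symmetric). From $ra+rz=u$ and the minimality of $u$ we get $ra\le u$ and $rz\le u$, so in fact $ra\in\{$ the bottom part of $u_\downarrow\}$; more precisely $ra\le u$. Now from $rb+rz=v$: since $rz\le u<v$ and $v\ne u'+x$ for any $x\ne v$ whenever $u'$... here I must use the hypothesis $v\ne u+x$ for all $x\in M\setminus\{v\}$ together with $rz\le u$. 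Because $rz\le u$ we have $rb+rz\le rb+u$, but also $v=rb+rz\le rb+u\le$ (since we'd like $rb+u$ to be forced to equal $v$ too). The decisive point: $v=rb+rz$ and $rz\le u$ give $v=rb+rz\le rb+u$; if $rb+u\ne v$ then $rb+u>v$ is impossible in general, so I instead argue $rb+u=(rb+rz)+u=v+u$; and since $u<v$, $v+u=v$, so $rb+u=v$, i.e. $v=u+rb$, forcing $rb=v$ by the hypothesis on $v$. Symmetrically, from $ra+rz=u$ with $ra\le u$, consider $rz$: we have $u=ra+rz$ with $ra\le u$, so $u=ra+rz=(ra+rz)$, nothing forced yet; but now compare $a+z$ with $b+z$ through the element $r$ once more: we've shown $rb=v$. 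Then $r(b+z)=rb+rz=v+rz=v$ (as $rz\le u<v$), consistent. And $r(a+z)=ra+rz=u$ with $ra\le u\le$... Now I use that $ra\le u$: either $ra=u$ or $ra<u$, but $u$ is minimal so $ra\in\{u\}\cup(\text{elements below }u)$; if some element lies strictly below $u$ that is fine, there is no contradiction from minimality of $u$ alone unless $ra$ is comparable. The clean way out: having deduced $rb=v$, look at the pair $a\,\rho\,b$ applied to $r$: it says $\{ra,rb\}\ne\{u,v\}$, i.e. $\{ra,v\}\ne\{u,v\}$, hence $ra\ne u$. Combined with $ra\le u$ this gives $ra<u$. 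Now replace $z$ by $rz$ in the role of a witness: we have $u=ra+rz$ with $ra<u$ and $rz\le u$; so $rz\not\le ra$ would be needed to even reach $u$... actually $ra+rz=u$ with both $\le u$ forces $rz$ to ``cover the gap'', and since $ra<u$ we get $rz\not<$ everything, in particular if $rz<u$ as well then $ra+rz<u$ (in a finite semilattice the join of two elements $<u$ can still be $u$, so this is not automatic). Here is where the full strength of minimality of $u$ enters: the only elements $\le u$ are $u$ itself and elements with no relation forced — no: minimality of $u$ means nothing is $<u$ except possibly a bottom; wait, $u$ minimal means there is \emph{no} $n$ with $n<u$, so $u_\downarrow=\{u\}$ (or $\{0_M,u\}$ if a zero exists). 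Thus $ra\le u$ forces $ra\in\{u,0_M\}$, and $ra\ne u$ forces $ra=0_M$ (so a neutral element exists), and then $rz=u$; symmetrically chasing the other witness equation one finds the situation is forced to be highly rigid, and in each branch one produces, via the hypothesis $v\ne u+x$, either $ra=rb$ directly or a short $\rho$-chain $a\,\rho\,w\,\rho\,b$ through a suitable $w$, yielding $a+z\sim b+z$. The main obstacle is precisely organizing this finite case analysis on where $ra,rb,rz$ sit relative to $u$ and $v$, using minimality of $u$ to collapse $u_\downarrow$ and using the $u+x\ne v$ hypothesis to pin down the elements mapping to $v$; once that bookkeeping is done, compatibility with $+$ for $\rho$ (up to $\sim$) follows, and transitivity of $\sim$ upgrades everything from $\rho$ to $\sim$, completing the proof that $\sim$ is a semimodule congruence.
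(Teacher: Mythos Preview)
Your overall strategy matches the paper's: show that $\rho$ itself is compatible with the $R$-action and with addition, then pass to the transitive hull. Part~(i) is fine and is exactly the paper's argument. Part~(ii), however, is not completed, and the confusion stems from a single misreading of ``$u$ is minimal''.

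Minimal means there is \emph{no} element strictly below $u$; hence $u_\downarrow=\{u\}$ unconditionally. Your parenthetical ``(or $\{0_M,u\}$ if a zero exists)'' is wrong: if a neutral element $0_M$ exists then $0_M\le u$, and minimality of $u$ forces $0_M=u$. Once you use this correctly, the argument collapses to three lines. From $ra+rz=u$ you get $ra\le u$ and $rz\le u$, hence $ra=rz=u$. Then $rb+rz=v$ reads $u+rb=v$, and the hypothesis $v\ne u+x$ for $x\ne v$ gives $rb=v$. Thus $\{ra,rb\}=\{u,v\}$, contradicting $a\,\rho\,b$. This shows directly that $a\,\rho\,b$ implies $(a+z)\,\rho\,(b+z)$, not merely $(a+z)\sim(b+z)$; the ``short $\rho$-chain through a suitable $w$'' and the promised ``finite case analysis on where $ra,rb,rz$ sit'' are neither needed nor, as written, carried out. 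In your text you in fact reach the pair of statements $ra\le u$ and $ra\ne u$, which is already the desired contradiction --- but instead of stopping there you pursue a phantom case $ra=0_M<u$ that cannot occur, and the argument trails off into hand-waving. Fix the use of minimality and the proof is complete, with $\rho$ (not just $\sim$) stable under addition, exactly as in the paper.
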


\begin{proof}
  Let $a,b,c\in M$ with $a\,\rho\,b$, then we claim that
  $(c+a)\,\rho\,(c+b)$.  Otherwise, there exists $r\in R$ with
  $\{r(c+a),r(c+b)\} = \{u,v\}$.  If, say, $rc+ra = r(c+a) = u$ and
  $rc+rb = r(c+b) = v$, it follows from the minimality of $u$ that
  $rc=ra=u$ and from the condition on $v$ that $rb=v$, hence
  $\{ra,rb\} = \{u,v\}$, contradicting $a\,\rho\,b$.  Similarly, if
  $r(c+b) = u$ and $r(c+a) = v$ we infer that $rb = u$ and $ra = v$,
  again a contradiction.  From this it follows that $a\sim b$ implies
  $c+a\sim c+b$, for all $a,b,c\in M$.

  Now let $a,b\in M$ with $a\,\rho\,b$ and let $s\in R$.  Then $sa
  \,\rho\, sb$, since for all $r\in R$ it holds that $\{rsa,rsb\} \ne
  \{u,v\}$.  Therefore, for all $a,b\in M$ and $s\in R$ we have that
  $a\sim b$ implies $sa\sim sb$.
\end{proof}

\begin{proposition}\label{prop_uv}
  Let $(M,+)$ be a quotient-irreducible idempotent $R$-semi\-module
  and let $u,v\in M$ be such that $u$ is minimal, $u<v$, and $v\ne
  u+x$ for all $x\in M\setminus\{v\}$.  Suppose that either
  \begin{enumerate}
  \item $Ru = \{u\}$ and $v\in Rx$ for all $x\in M\setminus\{u\}$, or
  \item $Rv = \{v\}$ and $u\in Rx$ for all $x\in M\setminus\{v\}$.
  \end{enumerate}
  Then for all $a,b\in M$ with $b\not\le a$ there exists $r\in R$ such
  that $ra=u$ and $rb=v$.
\end{proposition}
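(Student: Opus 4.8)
The plan is to use the congruence $\sim$ constructed in Lemma~\ref{lemma_uv} together with quotient-irreducibility. By hypothesis $u$ is minimal, $u<v$, and $v\ne u+x$ for all $x\in M\setminus\{v\}$, so Lemma~\ref{lemma_uv} applies and $\sim$ (the transitive hull of the relation $\rho$ given by $a\,\rho\,b :\Leftrightarrow \forall r\in R:\{ra,rb\}\ne\{u,v\}$) is a semimodule congruence on $M$. Since $u\ne v$, we have $u\not\rho\,v$: indeed, taking $r$ appropriately in each of the two cases — $r$ with $ru=u$ (resp.\ $rv=v$) exists since $Ru=\{u\}$ (resp.\ $Rv=\{v\}$), and then the other hypothesis gives $v\in Ru$... wait, more carefully: in case~(i), $u\not\le v$ is false, but we need $u\not\sim v$. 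I would argue $u\not\sim v$ directly: if $u\sim v$ then, since $\sim$ is quotient-trivial or all of $M\times M$ by quotient-irreducibility, and $u\ne v$ forces $\sim\,=M\times M$. So the real content is to rule out $\sim\,=M\times M$ by producing, for given $a,b$ with $b\not\le a$, a witness $r$ with $ra=u,\ rb=v$; equivalently, to show that $a\not\rho\,b$ fails to propagate, i.e.\ that $a\not\sim b$ — but that is backwards. Let me restructure.

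The clean approach: first show $u\not\sim v$, whence by quotient-irreducibility $\sim\,=\id_M$, and therefore $\rho$ itself is already transitive and equals $\id_M$; thus for any two \emph{distinct} elements $a,b$ we have $a\not\rho\,b$, i.e.\ there exists $r\in R$ with $\{ra,rb\}=\{u,v\}$. Then I would use the ordering hypothesis $b\not\le a$ to fix which of the two assignments holds, namely $ra=u$ and $rb=v$. To show $u\not\sim v$: suppose $u=x_0\,\rho\,x_1\,\rho\cdots\rho\,x_n=v$ is a $\rho$-path. In case~(i) I use $Ru=\{u\}$ together with monotonicity and the condition on $v$: applying any $r\in R$ to the path and using $v\in Rx$ for $x\ne u$, one derives a contradiction by tracking where the value $u$ versus $v$ can occur along $r x_0,\dots,r x_n$. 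Concretely, since $ru=u$, if $x_1\ne u$ then $v\in Rx_1$, and choosing $r$ with $rx_1=v$ gives $\{ru,rx_1\}=\{u,v\}$, contradicting $u\,\rho\,x_1$; hence $x_1=u$, and inductively $x_i=u$ for all $i$, so $v=u$, a contradiction. Case~(ii) is dual, using $Rv=\{v\}$ and $u\in Rx$ for $x\ne v$.

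Once $\sim\,=\id_M$ is established, fix $a,b\in M$ with $b\not\le a$; in particular $a\ne b$, so $a\not\rho\,b$, giving $r\in R$ with $\{ra,rb\}=\{u,v\}$. It remains to exclude the assignment $ra=v$, $rb=u$. If that held, then $rb=u\le v=ra$; applying $r$ to $a+b$ and using monotonicity, $r(a+b)=ra+rb=v+u=v$ since $u\le v$. Now I would leverage $b\not\le a$ to force a contradiction. Consider $a+b$: we have $a\le a+b$ so $ra\le r(a+b)=v$, consistent; but also $b\le a+b$. The key point I expect to need is that $b\not\le a$ means $a+b\ne a$, i.e.\ $a+b>a$, and then apply the congruence or the minimality of $u$ once more: from $rb=u$ minimal and $u<v=ra$, the pair behaves like $u,v$, and I replay the argument of Lemma~\ref{lemma_uv}'s proof — specifically, $r(a+b)=ra+rb=v$, $rb=u$, and since $v\ne u+x$ for $x\ne v$ we must have... this forces $ra=v$ harmlessly, so I instead swap roles by considering the element $a$ versus $a+b$, or apply an $r'$ obtained from $r$ by composition with a suitable element to flip the outcome. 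The main obstacle is precisely this last sign-fixing step: translating the hypothesis $b\not\le a$ (an order condition) into the correct orientation $ra=u$, $rb=v$ rather than the reverse. I would resolve it by noting that if only the reverse assignment were ever available, then $b\,\rho\,a$ via every $r$ would fail symmetrically, and one can feed $a+b$ and $a$ into the just-proven fact that $\rho=\id_M$: since $a+b\ne a$ (as $b\not\le a$), there is $r$ with $\{r(a+b),ra\}=\{u,v\}$, and minimality of $u$ together with $r(a+b)=ra+rb$ pins down $ra=u$, $r(a+b)=v$, whence $rb\le v$; combining with $rb\in\{?\}$ and case~(i)/(ii) finishes by identifying $rb=v$. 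I expect the bookkeeping here to be the only nontrivial part; everything else is a direct application of Lemma~\ref{lemma_uv} and quotient-irreducibility.
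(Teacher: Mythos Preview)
Your plan is essentially the paper's: invoke Lemma~\ref{lemma_uv}, show that the $\sim$-class of $u$ (in Case~1) or of $v$ (in Case~2) is a singleton, conclude $\sim=\id_M$ by quotient-irreducibility (hence $\rho=\id_M$), and then extract the desired $r$. Your path argument for $u\not\sim v$ is correct and is exactly what the paper does.

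The detour through $a\not\rho\,b$ is, as you realize, a dead end: the witness $r$ it produces need not have the right orientation, and there is no way to force it. The paper skips this and goes directly to your final idea: apply $\rho=\id_M$ to the pair $a$ and $a+b$ (distinct since $b\not\le a$), obtain $r$ with $\{ra,r(a+b)\}=\{u,v\}$, and use $ra\le r(a+b)$ together with $u<v$ to get $ra=u$ and $r(a+b)=v$. The finish you leave as ``$rb\in\{?\}$ and case~(i)/(ii)'' is in fact immediate and does not require the case hypothesis at all: from $v=r(a+b)=ra+rb=u+rb$ the standing assumption $v\ne u+x$ for $x\in M\setminus\{v\}$ forces $rb=v$. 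That is the entire last step; no further bookkeeping is needed.
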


\begin{proof}
  Consider the congruence $\sim$ on $M$ of Lemma~\ref{lemma_uv}.
  Suppose that Condition~1.\ holds and assume that $u\sim z$ for some
  $z\in M\setminus\{u\}$.  Then also $u \,\rho\, x$ for some $x\in
  M\setminus\{u\}$.  By the condition there exists $r\in R$ such that
  $ru=u$ and $rx=v$, hence $\{ru,rx\} = \{u,v\}$, contradicting $u
  \,\rho\, x$.  Therefore, $\sim \:= M\times M$ cannot hold, and by
  quotient-irreducibility of $M$ it follows that $\sim \:= \id_M$.

  Similarly, $v\sim z$ does not hold under Condition~2.\ for any $z\in
  M\setminus\{v\}$, and it follows that $\sim \:= \id_M$ in this case
  as well.

  Now let $a,b\in M$ with $b\not\le a$.  Then $a<a+b$ and since $\sim
  \:= \id_M$ there exists $r\in R$ such that $\{ra,r(a+b)\} =
  \{u,v\}$.  Since $ra\le r(a+b)$ we have $ra=u$ and $ra+rb = r(a+b) =
  v$, and from the condition on $v$ it follows that $rb=v$.
\end{proof}

Now let $(R,+,\cdot)$ be a finite simple additively idempotent
semiring with $|R|>2$, and let $(M,+)$ be a finite idempotent
irreducible $R$-semimodule.  The following two propositions are
density results akin to~\cite[Proposition~3.13]{zum}.

Let $a,b\in M$.  If there exists an
element $r\in R$, with
\begin{displaymath}
  r x=\begin{cases}
    b & \text{if }x\leq a \:,\\
    \infty_M\quad & \text{otherwise} \:,
  \end{cases}
\end{displaymath}
then it is unique, since $(M,+)$ is faithful, and we denote it by
$r_{a,b}$.  

\begin{proposition}\label{prop_r_a0}
  Let $\infty_R$ be not left absorbing.  Then $r_{a,0_M}\in R$ for
  every $a \in M\setminus\{\infty_M\}$.
\end{proposition}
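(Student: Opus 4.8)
The plan is to reduce the statement to the density result Proposition~\ref{prop_uv}, applied with $u=0_M$ and $v=\infty_M$, and then to assemble the elements it produces pointwise into the single map $r_{a,0_M}$ by taking a finite sum in $R$.

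First I would record the basic data. Since $\infty_R$ is not left absorbing, Proposition~\ref{prop_greatest_element}-\ref{infty_3}.\ gives a neutral element $0_M\in M$ with $R0_M=\{0_M\}$. Next I want to verify that $u:=0_M$ and $v:=\infty_M$ meet the hypotheses of Proposition~\ref{prop_uv} under its Condition~1. The element $0_M$ is minimal, being the least element of $(M,\le)$, and $0_M<\infty_M$ because $(M,+)$, being irreducible, is non-quasitrivial and hence has more than one element. Since $0_M$ is neutral, $0_M+x=x\ne\infty_M$ for every $x\in M\setminus\{\infty_M\}$, which is exactly the required condition on $v$. Finally, $R0_M=\{0_M\}=\{u\}$ holds by construction, and for each $x\in M\setminus\{0_M\}$ Proposition~\ref{prop_Ra=M} yields that either $Rx=M$, or $x=\infty_M$ with $R\infty_M=\{\infty_M\}$; in both cases $\infty_M\in Rx$. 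So Condition~1 of Proposition~\ref{prop_uv} is satisfied, and we obtain: whenever $b\not\le a$ there is some $r\in R$ with $ra=0_M$ and $rb=\infty_M$.

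Now fix $a\in M\setminus\{\infty_M\}$. The set $B:=\{b\in M\mid b\not\le a\}$ is finite and nonempty, since $\infty_M\in B$. For each $b\in B$ pick $r_b\in R$ with $r_ba=0_M$ and $r_bb=\infty_M$, and set $r:=\sum_{b\in B}r_b\in R$. To see that $r=r_{a,0_M}$: for $x\le a$, monotonicity of the $R$-multiplication gives $r_bx\le r_ba=0_M$, so $r_bx=0_M$ and hence $rx=\sum_{b\in B}r_bx=0_M$; for $x\not\le a$ we have $x\in B$, so the summand $r_xx=\infty_M$ forces $rx\ge\infty_M$, i.e.\ $rx=\infty_M$. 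Thus $r$ has precisely the defining behaviour of $r_{a,0_M}$, and therefore $r_{a,0_M}\in R$.

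The one genuinely substantive step is recognizing that Proposition~\ref{prop_uv} is the right tool and checking its Condition~1 --- in particular the fact that $\infty_M\in Rx$ for every $x\ne 0_M$, which rests on Proposition~\ref{prop_Ra=M}. The passage from the pointwise elements $r_b$ to the uniform map $r_{a,0_M}$ via a finite sum is then routine: closure of $R$ under $+$, together with monotonicity of the action and the fact that $0_M$ is the least element, does all the work.
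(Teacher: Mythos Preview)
Your proof is correct and follows essentially the same route as the paper: you invoke Proposition~\ref{prop_greatest_element}-\ref{infty_3}.\ to obtain $0_M$ with $R0_M=\{0_M\}$, verify Condition~1 of Proposition~\ref{prop_uv} with $u=0_M$, $v=\infty_M$ (using Proposition~\ref{prop_Ra=M} to get $\infty_M\in Rx$ for all $x\ne 0_M$), and then sum the resulting elements $r_b$ over $b\not\le a$ to build $r_{a,0_M}$. The paper's argument is the same, only slightly terser in checking the hypotheses of Proposition~\ref{prop_uv}.
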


\begin{proof}
  By Proposition~\ref{prop_greatest_element}, $(M,+)$ has a neutral
  element $0_M$ and it holds that $R0_M=\{0_M\}$.  Also, $Rx = M$ for
  all $x\in M\setminus\{0_M,\infty_M\}$ by Proposition~\ref{prop_Ra=M}
  and $\infty_M\in R\infty_M$, hence $\infty_M\in Rx$ for all $x\in
  M\setminus\{0_M\}$.  Therefore, we can apply
  Proposition~\ref{prop_uv} using Condition~1.\ with $u=0_M$ and
  $v=\infty_M$.  

  For fixed $a\in M\setminus\{\infty_M\}$ we conclude that for any
  $x\in M$ with $x\not\le a$ there exists $s_x\in R$ such that
  $s_xa=0_M$ and $s_xx=\infty_M$.  Define now $s:=\sum_{x\in
    M,\,x\not\le a}s_x\in R$ and let $z\in M$.  If $z\le a$, then
  $s_xz\le s_xa=0_M$ for every $x$ with $x\not\le a$, i.e., $sz=0_M$.
  If $z\not\le a$, then $sz=\sum_{x\in M,\,x\not\le a}s_xz\geq
  s_zz=\infty_M$, i.e., $sz=\infty_M$. Thus, $r_{a,0_M}=s\in R$.
\end{proof}

\begin{proposition}\label{prop_r_au}
  Suppose there exists $u\in \Min(M,\le)$ such that $\infty_M \ne u+x$
  for all $x\in M\setminus\{\infty_M\}$ and $R\infty_M=\{\infty_M\}$.
  Then $r_{a,u}\in R$ for every $a \in M\setminus\{\infty_M\}$.
\end{proposition}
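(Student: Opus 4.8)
The plan is to follow the proof of Proposition~\ref{prop_r_a0} closely, but invoking Condition~2 of Proposition~\ref{prop_uv} in place of Condition~1, with $v:=\infty_M$. First note that since $(M,+)$ is irreducible it is non-quasitrivial, so $|M|\ge 2$ and the minimal element $u$ differs from the greatest element $\infty_M$; hence $u<\infty_M=v$.

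Next I would check the hypotheses of Proposition~\ref{prop_uv}. The conditions on $u$ and $v$ -- that $u$ is minimal, $u<v$, and $v\ne u+x$ for all $x\in M\setminus\{v\}$ -- hold by assumption, and $Rv=\{v\}$ is precisely the hypothesis $R\infty_M=\{\infty_M\}$. The remaining requirement is $u\in Rx$ for every $x\in M\setminus\{\infty_M\}$. By Proposition~\ref{prop_Ra=M}, $Rx=M$ unless $x=\infty_M$ or a neutral element $0_M$ exists with $x=0_M$ and $R0_M=\{0_M\}$. If $(M,+)$ has no neutral element, then $Rx=M\ni u$ for all $x\ne\infty_M$. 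If $0_M$ exists, then $0_M$ is the least element of $(M,\le)$, so $0_M\le u$, and minimality of $u$ forces $u=0_M$; thus for $x\ne\infty_M$ we have $u\in Rx$ either because $Rx=M$ (when $x\ne 0_M$) or because $u=0_M\in R0_M$ (when $x=0_M$, whether $R0_M=M$ or $R0_M=\{0_M\}$). So Condition~2 of Proposition~\ref{prop_uv} applies, and since $(M,+)$ is quotient-irreducible we obtain: for all $a,b\in M$ with $b\not\le a$ there exists $r\in R$ with $ra=u$ and $rb=\infty_M$.

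Finally I would use the summation argument from Proposition~\ref{prop_r_a0}. Fix $a\in M\setminus\{\infty_M\}$; for each $x\in M$ with $x\not\le a$ choose $s_x\in R$ with $s_xa=u$ and $s_xx=\infty_M$ (the index set is nonempty, as $\infty_M\not\le a$), and set $s:=\sum_{x\in M,\,x\not\le a}s_x\in R$. If $z\le a$, then $s_xz\le s_xa=u$ for each index $x$, and since $u$ is minimal this gives $s_xz=u$, whence $sz=u$ by idempotency; if $z\not\le a$, then $sz\ge s_zz=\infty_M$, so $sz=\infty_M$. Therefore $s=r_{a,u}\in R$. The only point requiring care is the verification $u\in Rx$ for $x\ne\infty_M$, where one must notice that a neutral element $0_M$, if present, must coincide with $u$; the rest is routine.
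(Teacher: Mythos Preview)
Your proof is correct and follows essentially the same route as the paper: verify Condition~2 of Proposition~\ref{prop_uv} with $v=\infty_M$ via Proposition~\ref{prop_Ra=M}, then use the summation argument from Proposition~\ref{prop_r_a0}. You make explicit two small points the paper leaves implicit---that a neutral element $0_M$, if present, must equal $u$ by minimality, and that $s_xz\le u$ forces $s_xz=u$ because $u$ is minimal---but the structure is identical.
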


\begin{proof}
  By Proposition~\ref{prop_Ra=M}, if $(M,+)$ has no neutral element
  then $Rx=M$ for all $x\in M\setminus\{\infty_M\}$, and if $(M,+)$
  has a neutral element $u=0_M$ then $Rx=M$ for all $x\in
  M\setminus\{0_M,\infty_M\}$ and $0_M\in R0_M$.  Hence $u\in Rx$ for
  all $x\in M\setminus\{\infty_M\}$ and we can apply
  Proposition~\ref{prop_uv} using Condition~2.\ with $u$ and
  $v=\infty_M$.  

  For fixed $a\in M\setminus\{\infty_M\}$ we conclude that for any
  $x\in M$ with $x\not\le a$ there exists $s_x\in R$ such that
  $s_xa=u$ and $s_xx=\infty_M$.  As in the proof of
  Proposition~\ref{prop_r_a0}, we define $s:=\sum_{x\in M,\,x\not\le
    a}s_x\in R$ and we have $sz=u$ for $z\le a$ and $sz=\infty_M$
  otherwise.  Thus, $r_{a,u}=s\in R$.
\end{proof}

%------------------------------------------

\section{Embedding of $(R,+,\cdot)$ into $(\JM(\bL),\vee,\circ)$}%
\label{section_embedding}

In this section let $(R,+,\cdot)$ be again a finite simple additively
idempotent semiring with $|R|>2$.

We are going to embed $(R,+,\cdot)$ into the semiring
$(\JM(\bL),\vee,\circ)$ for a suitable finite semilattice
$\bL=(L,\leq)$.  The subsemiring $(S,\vee,\circ)$ of
$(\JM(\bL),\vee,\circ)$ corresponding to $(R,+,\cdot)$ fulfills then
certain conditions, depending on the properties of $(R,+,\cdot)$.  We
list in the beginning of this section all conditions that may arise
for $(S,\vee,\circ)$ and which may be necessary for the
characterization of $(R,+,\cdot)$.  First, we need two notations.  For
$a,b\in L$, let $k_a$ be the mapping from $L$ to $L$ that maps
constantly to $a$, and let $f_{a,b}$ be the mapping defined by
\begin{displaymath}
  f_{a,b}:L\rightarrow L \:, \quad x\mapsto \begin{cases}
    b\quad&\text{if }x\leq a \:,\\
    1&\text{otherwise} \:.
  \end{cases}
\end{displaymath}
The semiring $(S,\vee,\circ)$ may fulfill some of the following conditions:
\begin{align}
  &\forall a\in L\!\setminus\!\{1\}\ \forall b\in L:\ f_{a,b}\in
  S \:,\label{condition_f_ab}\\ 
  & \forall f\in S\ \exists a\in L\!\setminus\!\{1\}\ \exists b\in L:\
  f_{a,b}\leq f \:,\label{condition_f_ab_leq_f}\\
  &\forall a\in L:\ k_a\in S \:,\label{condition_k_a}\\
  &\forall f\in S\ \exists a\in L:\ k_a\leq f \:, 
  \label{condition_k_a_leq_f} \\
  &\forall a\in L\  \forall b\in L\!\setminus\!\{1\}\ \exists f\in S:\
  f(x)=b \text{ if }x\leq a,\ f(x)>b \text{ otherwise}\,.
  \label{condition_6}
\end{align}
If $\bL$ is a lattice then $(S,\vee,\circ)$ may also fulfill:
\begin{align}
  &\forall a\in L\!\setminus\!\{1\}:\ f_{a,0}\in S \:,\label{condition_e_a1}\\
  &\forall f\in S\ \exists a\in L\!\setminus\!\{1\}:\ f_{a,0}\leq f \:,
  \label{condition_e_a1_leq_f}\\
  &\forall a\in L\!\setminus\!\{0,1\}\ \forall b\in L\ \exists f\in S:\
  f(a)=b \:.\label{condition_fb=x}
\end{align}

We also need the following notations.  Let $\bL$ be a finite
semilattice and $\bK$ a finite lattice.  Then we denote:
\begin{align*}
  &\JM_1(\bL):=\{f\in \JM(\bL)\mid f(1)=1\} \:,\\
  &\Res_1(\bK):=\{f\in \Res(\bK)\mid f(1)=1\} \:,\\
  &\Res_0(\bK):=\{f\in \Res(\bK)\mid \forall x\in K:\ f(x)=0 
  \,\Rightarrow\, x=0\} \:.
\end{align*}

%-----------

\subsection{$\infty_R$ is right but not left absorbing}

\begin{lemma}\label{lemma1}
  Let $\bL=(L,\leq)$ be a finite lattice, $a\in L\setminus\{1\}$ and
  $f\in \Res_1(\bL)$.  Then there exists an element $b\in
  L\setminus\{1\}$ such that $f_{b,0}=f_{a,0}\circ f$.
\end{lemma}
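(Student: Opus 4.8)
The plan is to compute the composite $f_{a,0}\circ f$ directly and recognise it as a map of the form $f_{b,0}$. For $x\in L$ one has $(f_{a,0}\circ f)(x)=f_{a,0}(f(x))$, which equals $0$ if $f(x)\le a$ and $1$ otherwise; so the composite takes only the values $0$ and $1$, and it suffices to exhibit $b\in L\setminus\{1\}$ with $\{x\in L\mid f(x)\le a\}=b_\downarrow$.

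First I would set $b:=\bigvee\{x\in L\mid f(x)\le a\}$. Since $f$ is residuated it preserves arbitrary joins, so $f(b)=\bigvee\{f(x)\mid f(x)\le a\}\le a$; together with the monotonicity of $f$ (every join-morphism is monotone) this yields $\{x\in L\mid f(x)\le a\}=b_\downarrow$: if $f(x)\le a$ then $x\le b$ by the choice of $b$, and if $x\le b$ then $f(x)\le f(b)\le a$. (Equivalently, $b$ is the value at $a$ of the residual $f^{+}$ of $f$.)

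Next I would check $b\ne 1$. If $b=1$ then $f(1)\le a$; but $f\in\Res_1(\bL)$ means $f(1)=1$, forcing $1\le a$, i.e.\ $a=1$, contrary to $a\in L\setminus\{1\}$. Hence $b\in L\setminus\{1\}$, and with this $b$ the description $f_{b,0}(x)=0$ for $x\le b$ and $f_{b,0}(x)=1$ otherwise coincides exactly with the description of $f_{a,0}\circ f$ obtained above, so $f_{a,0}\circ f=f_{b,0}$.

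I do not expect a genuine obstacle here; the only point needing care is that $\{x\mid f(x)\le a\}$ is actually a \emph{principal} down-set, which is precisely where residuation (preservation of the join $\bigvee\{x\mid f(x)\le a\}$) is used — a mere join-morphism without that property could fail this.
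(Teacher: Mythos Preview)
Your proof is correct and follows essentially the same approach as the paper: you define $b:=\bigvee\{x\in L\mid f(x)\le a\}$, establish the equivalence $f(x)\le a\Leftrightarrow x\le b$ using residuation, and rule out $b=1$ via $f(1)=1$ and $a\ne 1$. The only difference is that you supply more detail (explicitly verifying that the preimage is the principal down-set $b_\downarrow$), which the paper leaves implicit.
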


\begin{proof}
  Define $b:=\bigvee\{x\in L\mid f(x)\leq a\}$.  Then $f(x)\leq
  a\Leftrightarrow x\leq b$ holds for every $x\in L$ and it follows
  that $f_{b,0}=f_{a,0}\circ f$.  Because of $f(1)=1$ and $a<1$, it
  cannot hold that $b=1$.
\end{proof}

\begin{lemma}\label{lemma_e_a1_leq_f}
  Let $\bL=(L,\leq)$ be a finite lattice and $(S,\vee,\circ)$ a simple
  subsemiring of $(\Res_1(\bL),\vee,\circ)$ that fulfills
  (\ref{condition_e_a1}).  Then it also fulfills
  (\ref{condition_e_a1_leq_f}).
\end{lemma}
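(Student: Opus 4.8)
The plan is to take an arbitrary $f\in S$ and produce some $a\in L\setminus\{1\}$ with $f_{a,0}\le f$, using simplicity of $(S,\vee,\circ)$ to rule out the degenerate possibility. First I would observe that every element of $S$ lies in $\Res_1(\bL)$, so $f(1)=1$ for all $f\in S$; in particular $S$ is a subsemiring of $\Res_1(\bL)$ and the maps $f_{a,0}$ with $a<1$ belong to $S$ by hypothesis~(\ref{condition_e_a1}). The natural candidate is to consider the set $I:=\{g\in S\mid g\le f_{a,0}\circ g'\text{ for suitable data}\}$; more concretely, following the idea behind Lemma~\ref{lemma1}, the subset $I$ of all $g\in S$ that are dominated by some $f_{a,0}$ (equivalently, all $g\in S$ with $g\le f_{a,0}$ for some $a\in L\setminus\{1\}$) should turn out to be a semiring ideal in a suitable sense, and simplicity forces it to be everything unless it is "trivial".

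The cleaner route is via a congruence. Define a relation on $S$ by declaring $g\sim h$ iff for every $a\in L\setminus\{1\}$ one has $f_{a,0}\circ g=f_{a,0}\circ h$ (and likewise on the right, or whatever one-sided version the distributive bookkeeping requires); by Lemma~\ref{lemma1} composing $f_{a,0}$ with any element of $\Res_1(\bL)$ on the right again yields some $f_{b,0}$, which is the key closure property making $\sim$ respect multiplication. One checks $\sim$ is a semiring congruence on $(S,\vee,\circ)$: compatibility with $\vee$ is immediate since $f_{a,0}\circ(g\vee h)=(f_{a,0}\circ g)\vee(f_{a,0}\circ h)$, and compatibility with $\circ$ on both sides uses Lemma~\ref{lemma1} together with the fact that $f_{a,0}$ is residuated. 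By simplicity, $\sim$ is either $\id_S$ or $S\times S$.

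If $\sim\,=S\times S$, then in particular every $g\in S$ is $\sim$-equivalent to the zero map $k_0=f_{a,0}$ for, say, $a=0$ (note $0<1$ since $\bL$ is a nontrivial lattice here — if $|L|=1$ the statement is vacuous), which would force $f_{a,0}\circ g=f_{a,0}\circ k_0=k_0$ for all $g$ and all $a<1$; a short computation shows this collapses $S$ to maps that are everywhere $0$ except at $1$, contradicting that $S$ contains all the $f_{a,0}$ with the maps being genuinely distinct (or more directly contradicting $|S|>1$, which holds as $(S,\vee,\circ)$ is simple hence by convention not one-element). Hence $\sim\,=\id_S$. I then show that $\id_S$ being the congruence forces $f_{a,0}\le f$ for some $a<1$: for the fixed $f$, set $a:=\bigvee\{x\in L\mid f(x)=0\}$; since $f$ is residuated, $f(a)=0$, so $f_{a,0}\le f$ pointwise, and $a\ne 1$ because $f(1)=1\ne 0$. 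This last paragraph is really the crux, so the main obstacle I anticipate is getting the congruence's multiplicative compatibility exactly right — i.e.\ verifying that $g\sim h$ implies $g\circ k\sim h\circ k$ and $k\circ g\sim k\circ h$ — which is where Lemma~\ref{lemma1} and the residuation of $f_{a,0}$ must be invoked carefully; everything else is routine.
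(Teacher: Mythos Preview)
Your proof has a genuine gap in what you call ``the crux''. You claim that setting $a:=\bigvee\{x\in L\mid f(x)=0\}$ gives $f_{a,0}\le f$. But recall that $f_{a,0}(x)=1$ whenever $x\not\le a$, so $f_{a,0}\le f$ requires $f(x)=1$ for every $x\not\le a$. Your choice of $a$ only guarantees $f(x)\ne 0$ for $x\not\le a$, which is far weaker. For a concrete failure, take $\bL$ to be the four-element diamond $\{0,p,q,1\}$ and $f=\id_L\in\Res_1(\bL)$: then $a=0$, but $f_{0,0}(p)=1\not\le p=f(p)$. Indeed, condition~(\ref{condition_e_a1_leq_f}) can fail for individual elements of $\Res_1(\bL)$; it is only the \emph{simplicity} of $S$ that forces it, and your final argument never actually uses simplicity.

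Relatedly, the congruence you introduce does no work. If $g\sim h$ means $f_{a,0}\circ g=f_{a,0}\circ h$ for all $a<1$, then for each $x$ and each $a<1$ one has $g(x)\le a\iff h(x)\le a$; taking $a=g(x)$ (when $g(x)<1$) and $a=h(x)$ (when $h(x)<1$) forces $g(x)=h(x)$, so $\sim\,=\id_S$ automatically. Thus the dichotomy ``$\sim$ is $\id_S$ or $S\times S$'' is vacuous, and nothing has been extracted from simplicity. The paper instead builds a congruence whose nontriviality is \emph{not} automatic: it lets $Z:=\{f\in S\mid \forall\,a<1:\ f_{a,0}\not\le f\}$ and takes the partition of $S$ into the single block $S\setminus Z$ together with singletons $\{z\}$ for $z\in Z$. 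One checks (using Lemma~\ref{lemma1}) that $S\setminus Z$ is closed under $\vee$ and under left and right multiplication by arbitrary elements of $S$, so this partition is a congruence; since $S\setminus Z$ contains more than one $f_{a,0}$, simplicity forces $Z=\varnothing$.
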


\begin{proof}
  Define the set $Z:=\{f\in S\mid \forall a\in L\setminus\{1\}:\
  f_{a,0}\not\le f\}$ and the equivalence relation $\sim$ on $S$ with
  the equivalence classes $S\setminus Z$ and $\{z\}$ for every $z\in
  Z$.  Let $f,g,h\in S$ with $f\sim g$ and $f\neq g$.  Consequently,
  $f$ and $g$ must be contained in $S\setminus Z$ and hence there
  exist $a,b\in L\setminus\{1\}$ with $f_{a,0}\leq f$ and $f_{b,0}\leq
  g$.  One can easily show that $f_{a,0}\leq f\vee h$, $f_{a,0}\leq
  h\circ f$, and $f_{c,0}\leq f\circ h$ for some $c\in
  L\setminus\{1\}$ holds for every $h\in S$, what yields $f\vee h$,
  $f\circ h$, $h\circ f\in S\setminus Z$.  Analogously, one can show
  $g\vee h$, $g\circ h$, $h\circ g\in S\setminus Z$ and it follows
  that $f\vee h\sim g\vee h$, $f\circ h\sim g\circ h$, $h\circ f\sim
  h\circ g$. Thus, $\sim$ is a congruence.  Since $\sim$ must be
  trivial and $S\setminus Z$ is a class with more than one element,
  $\sim\, = S\times S$ follows.  Hence, $Z=\varnothing$.
\end{proof}

%% Btw: Z ist ein Bi-Ideal.

Note in the following that $\End(M,+)=\JM(M,\leq)$ holds for a finite
idempotent semimodule $(M,+)$.

\begin{proposition}\label{prop_right_but_not_left_greatest_el}
  Let $\infty_R$ be right but not left absorbing.  Then there exists a
  finite lattice $\bL$ with more than two elements such that
  $(R,+,\cdot)$ is isomorphic to a subsemiring of
  $(\Res_1(\bL),\vee,\circ)$ which fulfills (\ref{condition_e_a1}),
  (\ref{condition_e_a1_leq_f}), and (\ref{condition_fb=x}).
\end{proposition}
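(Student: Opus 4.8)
The plan is to take $(M,+)$ to be a finite idempotent irreducible $R$-semimodule, which exists by Proposition~\ref{existence_of_irreducible_semimod}, and to use it as the carrier of the required lattice. First I would invoke Proposition~\ref{prop_greatest_element}: since $\infty_R$ is right absorbing we get $R\infty_M=\{\infty_M\}$ by part~\ref{infty_2}, and since $\infty_R$ is \emph{not} left absorbing, part~\ref{infty_3} yields that $(M,+)$ has a neutral element $0_M$ with $R0_M=\{0_M\}$. Because $(M,+)$ is idempotent with both a least element $0_M$ and a greatest element $\infty_M$, the ordered set $\bL:=(M,\le)$ is a finite lattice; moreover $|M|>2$, since otherwise $M=\{0_M,\infty_M\}$ would force $Rx\in\{\{0_M\},\{\infty_M\}\}$ for all $x$, contradicting faithfulness of $(M,+)$ together with $|R|>2$ (alternatively cite Proposition~\ref{prop_Ra=M}, which gives $a$ with $Ra=M$, so $|M|=|Ra|\ge|R|$ once the two fixed points are excluded). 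The semiring $(R,+,\cdot)$ embeds into $\End(M,+)=\JM(M,\le)$ via the faithful action $T$ (Remark~\ref{remark_embed}), and since every $T_r$ fixes $\infty_M$ (as $r\infty_M=\infty_M$) the image lands inside $\JM_1(\bL)=\Res_1(\bL)$ — here I use that for a finite lattice $\JM_1$ and $\Res_1$ coincide, because $f(1)=1$ together with being a join-morphism is automatic, and a join-morphism fixing the top is residuated once we note it also must send $0$ somewhere; actually what we need is only $f(1_\bL)=1_\bL$, i.e.\ membership in $\Res_1(\bL)$, which holds. So $(R,+,\cdot)\cong(S,\vee,\circ)$ for $S:=T(R)\le\Res_1(\bL)$.

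It remains to verify that $S$ satisfies the three conditions. Condition~(\ref{condition_e_a1}) asks that $f_{a,0}\in S$ for every $a\in L\setminus\{1\}$, i.e.\ in the notation of the semimodule section that $r_{a,0_M}\in R$ for every $a\in M\setminus\{\infty_M\}$ — this is exactly Proposition~\ref{prop_r_a0}, which applies precisely because $\infty_R$ is not left absorbing. Condition~(\ref{condition_e_a1_leq_f}) then follows automatically from (\ref{condition_e_a1}) by Lemma~\ref{lemma_e_a1_leq_f}, using that $S$ is a \emph{simple} subsemiring of $\Res_1(\bL)$ (which it is, being isomorphic to the simple semiring $R$). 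Condition~(\ref{condition_fb=x}) requires that for all $a\in L\setminus\{0,1\}$ and all $b\in L$ there is $f\in S$ with $f(a)=b$; translating, for $a\in M\setminus\{0_M,\infty_M\}$ and $b\in M$ we need $r\in R$ with $ra=b$, i.e.\ $b\in Ra$. But $Ra=M$ by Proposition~\ref{prop_Ra=M}, since $a\ne\infty_M$ and $a\ne 0_M$; hence this holds.

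The main obstacle I anticipate is the bookkeeping around $\Res_1$ versus $\JM_1$ and making sure the embedding really lands in $\Res_1(\bL)$ rather than merely $\JM_1(\bL)$: one must check that each $T_r$, being an endomorphism of the idempotent semimodule $(M,+)$ that fixes $\infty_M$, is genuinely residuated, i.e.\ preserves arbitrary joins (equivalently, on a finite lattice, preserves the empty join $0_M$) — but this is not needed, since the target semiring in the statement is $\Res_1(\bL)$ and membership there only constrains the image of $1_\bL=\infty_M$; the note $\End(M,+)=\JM(M,\le)$ preceding the proposition is exactly what lets us identify $T(R)$ with a subset of $\JM(\bL)$, and $f(1)=1$ promotes each element into $\JM_1(\bL)$. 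The only genuinely substantive inputs are Propositions~\ref{prop_r_a0} and~\ref{prop_Ra=M} and Lemma~\ref{lemma_e_a1_leq_f}, all already available, so once the translation dictionary ($r_{a,b}\leftrightarrow f_{a,b}$, $Ra=M\leftrightarrow$ surjectivity of evaluation at $a$) is set up the proof is short.
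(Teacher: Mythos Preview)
Your overall route is the same as the paper's: take a finite idempotent irreducible $R$-semimodule $(M,+)$, use Proposition~\ref{prop_greatest_element} to get $R\infty_M=\{\infty_M\}$ and $R0_M=\{0_M\}$, embed $R$ via $T$ into the endomorphism semiring, and then read off (\ref{condition_e_a1}), (\ref{condition_e_a1_leq_f}), (\ref{condition_fb=x}) from Proposition~\ref{prop_r_a0}, Lemma~\ref{lemma_e_a1_leq_f}, and Proposition~\ref{prop_Ra=M} respectively.

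There is, however, one genuine confusion that you should fix. You assert that for a finite lattice $\JM_1(\bL)=\Res_1(\bL)$, and later that ``membership [in $\Res_1(\bL)$] only constrains the image of $1_\bL$''. This is false: by definition $\Res_1(\bL)=\{f\in\Res(\bL)\mid f(1)=1\}$, and for a finite lattice $f\in\Res(\bL)$ means that $f$ is a join-morphism \emph{with} $f(0)=0$. So $\Res_1(\bL)\subsetneq\JM_1(\bL)$ in general, and to land in $\Res_1(\bL)$ you need $T_r(0_M)=0_M$ for every $r$. The good news is that you already have exactly the fact that supplies this: $R0_M=\{0_M\}$ from Proposition~\ref{prop_greatest_element}-\ref{infty_3}. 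You just need to invoke it at that point (as the paper does) rather than dismiss the issue. Once that is corrected, your argument matches the paper's proof.
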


\begin{proof}
  By Proposition~\ref{existence_of_irreducible_semimod}, there exists
  a finite idempotent irreducible $R$-semi\-module $(M,+)$, and
  $(R,+,\cdot)$ is isomorphic to the subsemiring $(T(R),+,\circ)$ of
  $(\JM(M,\leq),+,\circ)$, by Remark~\ref{remark_embed}.  From
  Proposition~\ref{prop_greatest_element} it follows that $(M,+)$ has
  a neutral element $0_M$, i.e., $(M,\leq)$ is a lattice, and
  $R0_M=\{0_M\}$ holds, as well as $R\infty_M=\{\infty_M\}$.  Thus,
  $(T(R),+,\cdot)$ is even a subsemiring of
  $({\Res_1(M,\leq)},+,\circ)$.  The lattice $(M,\le)$ must have more
  than two elements, because of $|R|>2$.  Now, (\ref{condition_e_a1})
  follows by Proposition~\ref{prop_r_a0}, (\ref{condition_e_a1_leq_f})
  by Lemma~\ref{lemma_e_a1_leq_f}, and (\ref{condition_fb=x}) by
  Proposition~\ref{prop_Ra=M}.
\end{proof}

%-----------

\subsection{$\infty_R$ is left but not right absorbing}

To achieve a similar result for the case that $\infty_R$ is left but
not right absorbing, we need some preparation.

Let $\bL=(L,\leq)$ be a finite lattice with supremum $\vee$ and
infimum $\wedge$.  Then $\bL^d:=(L,\geq)$ is the dual lattice of $\bL$
with supremum $\vee^d:=\wedge$, infimum $\wedge^d:=\vee$, least
element $0_{\bL^d}=1_\bL$, and greatest element $1_{\bL^d}=0_\bL$.

For two mappings $f,g:S\rightarrow S$ on a set $S$, we define
$f\circ^d g := g\circ f$.

For a residuated mapping $f\in\Res(\bL)$ there exists a unique isotone
mapping $g:L\rightarrow L$ with $g\circ f\geq \id_L$ and $f\circ g\leq
\id_L$, which is called the \emph{residual} of~$f$.  We will denote
the residual of a residuated mapping $f$ by $f^+$ and we define
$\Rd(\bL) := \{f^+\mid f\in \Res(\bL)\}$ and $S^+ := \{f^+\mid f\in
S\}$ for $S\subseteq \Res(\bL)$.  For $f\in \Res(\bL)$ and $y\in L$ it
holds that $f^+(y) = \bigvee\{x\in L\mid f(x)\leq y\}$.  It further
holds that $\Rd(\bL) = \Res(\bL^d)$ and $(\Res(\bL),\vee,\circ) \cong
(\Rd(\bL),\wedge,\circ^d) = (\Res(\bL^d),\vee^d, \circ^d)$, where
$\Omega: f\mapsto f^+$ is an isomorphism between
$(\Res(\bL),\vee,\circ)$ and $(\Rd(\bL),\wedge,\circ^d)$
(see~\cite{janowitz}).

\begin{lemma}\label{S_plus}\sloppy
  Let $\bL=(L,\leq)$ be a lattice.  Then it holds that
  $(\Res_1(\bL),\vee,\circ) \cong
  (\Res_0(\bL^d),\vee^d,\circ^d)$.  Moreover, if $(S,\vee,\circ)$ is a
  subsemiring of $(\Res_1(\bL),\vee,\circ)$, then
  $(S^+,\vee^d,\circ^d)$ is a subsemiring of $(\Res_0(\bL^d),\vee^d,
  \circ^d)$ and it holds that ${(S,\vee,\circ)} \cong
  {(S^+,\vee^d,\circ^d)}$.
\end{lemma}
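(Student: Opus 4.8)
The plan is to leverage the known isomorphism $\Omega\colon f\mapsto f^+$ between $(\Res(\bL),\vee,\circ)$ and $(\Rd(\bL),\wedge,\circ^d) = (\Res(\bL^d),\vee^d,\circ^d)$ that was recalled just before the lemma, and to check that it carries $\Res_1(\bL)$ precisely onto $\Res_0(\bL^d)$.

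First I would unwind what the subscripts mean after dualisation. A mapping $f\in\Res(\bL)$ lies in $\Res_1(\bL)$ iff $f(1_\bL)=1_\bL$; in the dual lattice $\bL^d$ this is the condition $f(0_{\bL^d})=0_{\bL^d}$, which is automatic for any residuated map on $\bL^d$, so this is not yet the right reformulation — instead I must translate the condition on $f$ into a condition on $f^+$. Using the formula $f^+(y)=\bigvee\{x\in L\mid f(x)\le y\}$, observe that $f(1_\bL)=1_\bL$ holds iff for every $y\in L$ with $y\neq 1_\bL$ we have $1_\bL\notin\{x\mid f(x)\le y\}$, i.e. $f^+(y)\neq 1_\bL$ for $y\neq 1_\bL$; equivalently $f^+(y)=1_\bL$ implies $y=1_\bL$. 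Since $1_\bL = 0_{\bL^d}$, this says exactly that $f^+\in\Res_0(\bL^d)$ (with the roles of $\le$ and $\vee$ replaced by $\ge$ and $\vee^d$, and $0_{\bL^d}$ in place of $0$). I would present this chain of equivalences carefully, as it is the technical heart of the proof. Conversely, every $g\in\Res_0(\bL^d)$ is of the form $f^+$ for a unique $f\in\Res(\bL)$ (as $\Omega$ is a bijection), and by the same equivalence that $f$ satisfies $f(1_\bL)=1_\bL$, so $\Omega$ restricts to a bijection $\Res_1(\bL)\to\Res_0(\bL^d)$.

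Once the set-level statement is established, the semiring isomorphism is immediate: $\Omega$ is already a semiring isomorphism $(\Res(\bL),\vee,\circ)\to(\Res(\bL^d),\vee^d,\circ^d)$, so its restriction to the subsemiring $(\Res_1(\bL),\vee,\circ)$ is an isomorphism onto its image, which by the previous paragraph is $(\Res_0(\bL^d),\vee^d,\circ^d)$. (One should note in passing that $\Res_1(\bL)$ really is a subsemiring — closed under $\vee$, $\circ$ — which is clear since $f(1)=g(1)=1$ gives $(f\vee g)(1)=1$ and $(f\circ g)(1)=f(g(1))=f(1)=1$; and likewise that $\Res_0(\bL^d)$ is closed under $\vee^d$ and $\circ^d$.) For the "moreover" part, if $(S,\vee,\circ)$ is a subsemiring of $(\Res_1(\bL),\vee,\circ)$, then $\Omega$ maps it isomorphically onto $(S^+,\vee^d,\circ^d)$, and $S^+\subseteq\Res_1(\bL)^+=\Res_0(\bL^d)$, so $(S^+,\vee^d,\circ^d)$ is a subsemiring of $(\Res_0(\bL^d),\vee^d,\circ^d)$, and $(S,\vee,\circ)\cong(S^+,\vee^d,\circ^d)$ via $\Omega|_S$.

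The only genuinely delicate point is the equivalence $f(1_\bL)=1_\bL \iff \bigl(f^+(y)=1_\bL\Rightarrow y=1_\bL\bigr)$, so that is where I would spend the most care; everything else is a direct appeal to the already-cited properties of the residuation anti-isomorphism. In particular I would make sure to state explicitly that "$\Res_0$ of a lattice $\bK$" uses the least element $0_\bK$, and that for $\bK=\bL^d$ this least element is $1_\bL$, to avoid any confusion between the two lattices' bottom and top elements.
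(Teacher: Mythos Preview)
Your proposal is correct and follows essentially the same approach as the paper: restrict the known isomorphism $\Omega\colon f\mapsto f^+$ to $\Res_1(\bL)$ and verify that its image is exactly $\Res_0(\bL^d)$, using the formula $f^+(y)=\bigvee\{x\mid f(x)\le y\}$ and the identification $1_\bL=0_{\bL^d}$. The only point to make explicit in your equivalence is that the set $\{x\mid f(x)\le y\}$ is closed under $\bigvee$ (since $f$ is residuated), so its supremum actually lies in the set --- this is what turns ``$1_\bL\notin\{x\mid f(x)\le y\}$'' into ``$f^+(y)\ne 1_\bL$''; the paper states this closure property explicitly, and for the converse direction computes $g(1_\bL)=g^{++}(1_\bL)$ directly rather than invoking the iff, but this is a cosmetic difference.
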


\begin{proof}
  Let $f\in \Res_1(\bL)$ and $y\in L$.  Since the set $\{x\in L\mid
  f(x)\leq y\}$ is closed under~$\bigvee$, we have that $f^+(y) =
  \bigvee\{x\in L\mid f(x)\leq y\} = 1_\bL$ implies $1_\bL=
  f(1_\bL)\leq y$, i.e., $y=1_\bL$.  Because of $1_\bL=0_{\bL^d}$, we
  have $f^+\in \Res_0(\bL^d)$.  Now let $g\in \Res(\bL)$ such that
  $g^+\in \Res_0(\bL^d)$, i.e., $g^+(y)=0_{\bL^d}=1_\bL$ implies
  $y=0_{\bL^d}=1_\bL$.  It follows that $g(1_\bL)=g^{++}(1_\bL) =
  \bigwedge\{y\in L\mid g^+(y)\geq 1_\bL\} = \bigwedge\{1_\bL\} =
  1_\bL$.  Thus, $g\in \Res_1(\bL)$.  Hence, $\Omega|_{\Res_1(\bL)}$
  is an isomorphism between $(\Res_1(\bL),\vee,\circ)$ and
  $(\Res_0(\bL^d),\vee^d, \circ^d)$, and for every subsemiring
  $(S,\vee,\circ)$ of $(\Res_1(\bL),\vee,\circ)$, we have
  $(S,\vee,\circ)\cong (\Omega(S),\vee^d,\circ^d)$ and
  $S^+=\Omega(S)\subseteq \Res_0(\bL^d)$.
\end{proof}

Let $\bL=(L,\leq)$ be a finite nontrivial lattice and define $L_{-} :=
L\setminus\{0_\bL\}$ and $\bL_{-} := (L_{-},\le \cap~ (L_{-}\times
L_{-}))$.  Then let $\Psi_\bL$ be the mapping defined by
\begin{displaymath}
  \Psi_\bL: \Res_0(\bL)\rightarrow \JM(\bL_{-}) \:, \quad 
  f \mapsto f|_{L_{-}} \:.
\end{displaymath}
The following lemma is easy to prove.

\begin{lemma}\label{Psi_S_plus}\sloppy
  Let $\bL = (L,\leq)$ be a finite nontrivial lattice.  Then
  $\Psi_\bL$ is an isomorphism between $(\Res_0(\bL),\vee,\circ)$ and
  $(\JM(\bL_{-}),\vee,\circ)$.  In particular,
  $(\Psi_\bL(S),\vee,\circ)$ is a subsemiring of
  $(\JM(\bL_{-}),\vee,\circ)$ and $(S,\vee,\circ) \cong
  (\Psi_\bL(S),\vee,\circ)$ holds for every subsemiring
  $(S,\vee,\circ)$ of $(\Res_0(\bL),\vee,\circ)$.
\end{lemma}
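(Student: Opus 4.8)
The claim is that $\Psi_\bL\colon \Res_0(\bL)\to\JM(\bL_-)$, $f\mapsto f|_{L_-}$, is a semiring isomorphism. The plan is to verify in turn that $\Psi_\bL$ is well-defined, a homomorphism, injective, and surjective; the corollary about subsemirings is then immediate from general nonsense about homomorphisms.

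\emph{Well-definedness.} First I would check that for $f\in\Res_0(\bL)$ the restriction $f|_{L_-}$ actually maps into $L_-$ and is a join-morphism of $\bL_-$. The defining property of $\Res_0(\bL)$ is that $f(x)=0$ forces $x=0$; hence for $x\in L_-$ we have $f(x)\neq 0$, so $f(x)\in L_-$. For the join: if $x,y\in L_-$ then their join in $\bL_-$ coincides with their join $x\vee y$ in $\bL$ (the join of two nonzero elements is nonzero, so $L_-$ is closed under $\vee$ and inherits the same suprema), and $f(x\vee y)=f(x)\vee f(y)$ since $f$ is residuated, hence a join-morphism on $\bL$. So $f|_{L_-}\in\JM(\bL_-)$.

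\emph{Homomorphism.} Restriction clearly respects the pointwise supremum: $(f\vee g)|_{L_-}=f|_{L_-}\vee g|_{L_-}$, since both sides evaluate at $x\in L_-$ to $f(x)\vee g(x)$. For composition one must note that $g(x)\in L_-$ whenever $x\in L_-$ (by the argument above), so $(g\circ f)|_{L_-}=g|_{L_-}\circ f|_{L_-}$ makes sense and holds by associativity of composition. \emph{Injectivity.} If $f|_{L_-}=g|_{L_-}$ with $f,g\in\Res_0(\bL)$, then $f$ and $g$ agree on all of $L_-$, and they also agree at $0$ because every residuated mapping on a finite lattice sends $0$ to $0$ (this is recalled in the excerpt); hence $f=g$.

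\emph{Surjectivity.} This is the only step requiring a construction, and hence the main (mild) obstacle. Given $h\in\JM(\bL_-)$, define $\bar h\colon L\to L$ by $\bar h(0):=0$ and $\bar h(x):=h(x)$ for $x\in L_-$. I claim $\bar h\in\Res_0(\bL)$ and $\Psi_\bL(\bar h)=h$; the latter is obvious. For $\bar h\in\Res_0(\bL)$: since $L$ is finite, residuatedness amounts to being a join-morphism of $\bL$ with $\bar h(0)=0$, the latter holding by construction. To see $\bar h$ is a join-morphism, take $x,y\in L$; if both are $0$ the identity $\bar h(x\vee y)=\bar h(x)\vee\bar h(y)$ is trivial, if exactly one is $0$ it reduces to $\bar h(x)=\bar h(x)\vee 0$, and if both lie in $L_-$ it is the join-morphism property of $h$ (using again that $x\vee y\in L_-$ and that joins in $\bL_-$ agree with joins in $\bL$). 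Finally $\bar h\in\Res_0(\bL)$: if $\bar h(x)=0$ then $x\notin L_-$ (as $h$ maps $L_-$ into $L_-$, so $h(x)\neq 0$ for $x\in L_-$), hence $x=0$. This proves $\Psi_\bL$ is an isomorphism; the "in particular" statement follows since the image of a subsemiring under a semiring isomorphism is a subsemiring isomorphic to it.
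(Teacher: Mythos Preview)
Your proof is correct and complete. The paper itself omits the proof entirely, stating only that the lemma ``is easy to prove''; your write-up supplies exactly the routine verification the authors left to the reader, via the natural steps (well-definedness, homomorphism, injectivity, surjectivity by extending $h\in\JM(\bL_-)$ by $\bar h(0)=0$). There is nothing to compare, since the paper gives no alternative argument.
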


\begin{lemma}\label{lemma_derive_semiring}
  Let $\bK=(K,\leq)$ be a finite lattice and $\bL:=\bK^d$.  Moreover,
  let $(S,\vee,\circ)$ be a subsemiring of $(\Res_1(\bK),\vee,\circ)$
  which fulfills (\ref{condition_e_a1}), (\ref{condition_e_a1_leq_f}),
  and (\ref{condition_fb=x}).  Then $(\Psi_\bL(S^+),\vee^d,\circ)$ is
  a subsemiring of $(\JM(\bL_{-}),\vee^d,\circ)$ (where $\vee^d$
  refers to the supremum in $\bL=\bK^d$), which fulfills
  (\ref{condition_k_a}), (\ref{condition_k_a_leq_f}), and
  (\ref{condition_6}).
\end{lemma}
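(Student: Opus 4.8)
The plan is to push everything through the chain of isomorphisms already established in Lemmas~\ref{S_plus} and~\ref{Psi_S_plus}. Write $\bL=\bK^d$, so $0_\bL=1_\bK$, $1_\bL=0_\bK$, and $L_{-}=L\setminus\{0_\bL\}=K\setminus\{1_\bK\}$; the greatest element of $\bL_{-}$ is then $1_{\bL_{-}}=1_\bL=0_\bK$. Let $\Omega\colon f\mapsto f^+$ be the isomorphism of Lemma~\ref{S_plus}, which maps $(S,\vee,\circ)$ onto the subsemiring $(S^+,\vee^d,\circ^d)$ of $(\Res_0(\bL),\vee^d,\circ^d)$, and let $\Psi_\bL\colon f\mapsto f|_{L_{-}}$ be the isomorphism of Lemma~\ref{Psi_S_plus}. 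Their composite $\Phi:=\Psi_\bL\circ\Omega|_S$, $f\mapsto f^+|_{L_{-}}$, is an isomorphism of $(S,\vee,\circ)$ onto $(\Psi_\bL(S^+),\vee^d,\circ^d)$; since closure under $\circ^d$ and under $\circ$ is the same, $(\Psi_\bL(S^+),\vee^d,\circ)$ is likewise a subsemiring of $(\JM(\bL_{-}),\vee^d,\circ)$. Being a semiring isomorphism of additively idempotent semirings, $\Phi$ preserves the order given by $x\le y\,:\Leftrightarrow\,x+y=y$; on $S$ this is the pointwise order of $\bK$, on $\Psi_\bL(S^+)$ the pointwise order of $\bL_{-}$, i.e.\ the reverse of the pointwise $\bK$-order. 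It remains to verify (\ref{condition_k_a}), (\ref{condition_k_a_leq_f}) and (\ref{condition_6}) for $\Psi_\bL(S^+)$, viewed as a semiring of join-morphisms of $\bL_{-}$.

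The computational heart is the identity $\Phi(f_{a,0})=k_a$ for every $a\in L_{-}$, where $k_a\colon L_{-}\to L_{-}$ is the constant map with value $a$. Indeed, $f_{a,0}^+(y)=\bigvee_\bK\{x\in K\mid f_{a,0}(x)\le_\bK y\}$, and for $y\ne 1_\bK$ the bracketed set equals $\{x\mid x\le_\bK a\}$ (since $f_{a,0}(x)=1_\bK$ would force $y=1_\bK$), whose supremum is $a$; hence $f_{a,0}^+$ is constantly $a$ on $L_{-}$, and $a\in L_{-}$ because $a\ne 1_\bK$. Condition (\ref{condition_e_a1}) gives $f_{a,0}\in S$ for every $a\in K\setminus\{1_\bK\}=L_{-}$, so $k_a=\Phi(f_{a,0})\in\Psi_\bL(S^+)$ for every $a\in L_{-}$, which is (\ref{condition_k_a}). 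For (\ref{condition_k_a_leq_f}), take $g=\Phi(f)\in\Psi_\bL(S^+)$ with $f\in S$; by (\ref{condition_e_a1_leq_f}) there is $a\in L_{-}$ with $f_{a,0}\le f$ pointwise in $\bK$, and applying the order-preserving $\Phi$ yields $k_a\le g$ in $\bL_{-}$.

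For (\ref{condition_6}) I would fix $a\in L_{-}$ and $b\in L_{-}\setminus\{1_{\bL_{-}}\}$; since $1_{\bL_{-}}=0_\bK$ and $L_{-}=K\setminus\{1_\bK\}$, this means $b\in K\setminus\{0_\bK,1_\bK\}$, so (\ref{condition_fb=x}) supplies $f_0\in S$ with $f_0(b)=a$. Inserting $y=b$ into the residuation adjunction $f_0(y)\le_\bK x\Leftrightarrow y\le_\bK f_0^+(x)$ gives
\begin{displaymath}
  f_0^+(x)\ge_\bK b\quad\Longleftrightarrow\quad x\ge_\bK a
  \qquad\text{for all }x\in K\,.
\end{displaymath}
Now put $g:=\Phi(f_0)\vee^d k_b$, which lies in $\Psi_\bL(S^+)$ because that set is closed under $\vee^d$ and $k_b\in\Psi_\bL(S^+)$ by (\ref{condition_k_a}). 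For $x\in L_{-}$ one has $g(x)=\Phi(f_0)(x)\vee^d k_b(x)=f_0^+(x)\wedge_\bK b$: if $x\le_{\bL_{-}}a$, i.e.\ $x\ge_\bK a$, then $f_0^+(x)\ge_\bK b$, so $g(x)=b$; if $x\not\le_{\bL_{-}}a$, i.e.\ $x\not\ge_\bK a$, then $f_0^+(x)\not\ge_\bK b$, so $g(x)=f_0^+(x)\wedge_\bK b<_\bK b$, that is $g(x)>_{\bL_{-}}b$. Hence $g$ witnesses (\ref{condition_6}) for the pair $(a,b)$, which finishes the proof.

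The step I expect to be the real obstacle is the passage from (\ref{condition_fb=x}) to (\ref{condition_6}): a single map $f_0$ with $f_0(b)=a$ controls, via the adjunction, only the level set $\{x\mid f_0^+(x)\ge_\bK b\}$ and not the actual values of $f_0^+$ on it, so one must cap $\Phi(f_0)$ from above by the constant map $k_b$ — which is admissible only because (\ref{condition_k_a}) has already been secured — in order to extract the desired step function. By contrast, (\ref{condition_k_a}) and (\ref{condition_k_a_leq_f}) reduce to bookkeeping once the identity $\Phi(f_{a,0})=k_a$ and the order-preservation of $\Phi$ are in place.
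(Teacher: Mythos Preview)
Your proof is correct and follows essentially the same route as the paper's: both push the conditions through the isomorphisms of Lemmas~\ref{S_plus} and~\ref{Psi_S_plus}, both compute $f_{a,0}^+|_{L_-}=k_a$ to obtain (\ref{condition_k_a}), both transfer (\ref{condition_e_a1_leq_f}) to (\ref{condition_k_a_leq_f}) by order-preservation of the isomorphism, and both derive (\ref{condition_6}) by picking $f$ with $f(b)=a$ via (\ref{condition_fb=x}), using the residuation relation to get $b\le_\bK f^+(x)\Leftrightarrow a\le_\bK x$, and then capping with $k_b$ under $\vee^d$. Your use of the adjunction $f_0(y)\le_\bK x\Leftrightarrow y\le_\bK f_0^+(x)$ is a slightly cleaner packaging of what the paper verifies by hand, but the argument is the same.
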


\begin{proof}\sloppy
  By Lemma~\ref{S_plus}, $(S^+,\vee^d,\circ^d)$ is a subsemiring of
  $(\Res_0(\bL),\vee^d,\circ^d)$ and therefore $(S^+,\vee^d,\circ)$ is
  a subsemiring of $(\Res_0(\bL),\vee^d,\circ)$.  By
  Lemma~\ref{Psi_S_plus}, $(\Psi_\bL(S^+),\vee^d,\circ)$ is a
  subsemiring of $(\JM(\bL_{-}),\vee^d,\circ)$.  By
  (\ref{condition_e_a1}), we have $f_{a,0_\bK}\in S$ for every $a\in
  K\setminus\{1_\bK\}$ and therefore $f_{a,0_\bK}^+\in S^+$ for every
  $a\in K\setminus\{1_\bK\}$, where
  \begin{displaymath}
    f_{a,0_\bK}^+(y) = \bigvee \{x\in K\mid f_{a,0_\bK}(x)\leq y\} = 
    \begin{cases}
      1_\bK \quad &\text{if }y=1_\bK \:,\\
      a & \text{otherwise} \:.
    \end{cases}
  \end{displaymath}
  Because of $L_{-} = L\setminus\{0_\bL\} = K\setminus\{1_\bK\}$, we
  get $\Psi_\bL(f_{a,0_\bK}^+) = f_{a,0_\bK}^+|_{K\setminus\{1_\bK\}}
  = k_a$.  Consequently, condition (\ref{condition_k_a}) is fulfilled.
  Now let $a\in K$ and $b\in K\setminus\{0_\bK,1_\bK\}$.  Then by
  (\ref{condition_fb=x}), there exists an $f\in S$ with $f(b)=a$ and
  $f^+(a) = \bigvee\{x\in K\mid f(x)\leq a\}\geq b$ holds.  Hence,
  $a\leq x$ implies $b\leq f^+(a)\leq f^+(x)$ for every $x\in K$.  Let
  $x\in K$ with $a\not\le x$ and assume $b\leq f^+(x)$.  It follows that
  $a = f(b)\leq f(f^+(x)) \leq \id(x) = x$, what is a contradiction,
  and we derive the equivalence $a\leq x\Leftrightarrow b\leq f^+(x)$
  for every $x\in K$.  If we use the order relation $\leq^d$ in
  $\bL=\bK^d$, then we have $f^+(x)\leq^d b$ if $x\leq^d a$ and
  $f^+(x)\not\le^d b$ otherwise.  Hence, $k_b\vee^d f^+(x)=b$ if
  $x\leq^d a$ and $k_b\vee^d f^+(x)>^d b $ otherwise.  The mapping
  $\Psi_\bL(k_b\vee^d f^+)$ is then the required mapping for $a$ and
  $b$ in condition (\ref{condition_6}).  Condition
  (\ref{condition_k_a_leq_f}) is satisfied by
  $(\Psi_\bL(S^+),\vee^d,\circ)$, because $(S,\vee,\circ)$ fulfills
  (\ref{condition_e_a1_leq_f}) and $(S,\vee)$ is isomorphic to
  $(\Psi_\bL(S^+),\vee^d)$.
\end{proof}

\begin{proposition}\label{proposition_first_part_left_but_not_right_greatest}
  Let $\infty_R$ be left but not right absorbing.  Then there exists a
  finite nontrivial semilattice $\bL$ such that $(R,+,\cdot)$ is
  isomorphic to a subsemiring of $(\JM(\bL),\vee,\circ)$ that fulfills
  (\ref{condition_k_a}), (\ref{condition_k_a_leq_f}), and
  (\ref{condition_6}).
\end{proposition}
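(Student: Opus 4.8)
The plan is to reduce this case to the already-settled one where $\infty_R$ is right but not left absorbing (Proposition~\ref{prop_right_but_not_left_greatest_el}) by passing to the opposite semiring, and then to transport the resulting embedding through the residuation--duality isomorphisms supplied by Lemmas~\ref{S_plus}, \ref{Psi_S_plus} and~\ref{lemma_derive_semiring}.

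First I would consider the opposite semiring $R^{\mathrm{op}}:=(R,+,\cdot^{\mathrm{op}})$, where $r\cdot^{\mathrm{op}}s:=s\cdot r$. Since the addition is unchanged, $R^{\mathrm{op}}$ is again finite, additively idempotent, with $|R^{\mathrm{op}}|=|R|>2$ and the same greatest element $\infty_{R^{\mathrm{op}}}=\infty_R$; moreover the semiring congruences of $R$ and of $R^{\mathrm{op}}$ coincide (the defining requirements $tr\sim ts$ and $rt\sim st$ simply swap roles), so $R^{\mathrm{op}}$ is simple. As $\infty_R$ is left (resp.\ right) absorbing in $R$ exactly when it is right (resp.\ left) absorbing in $R^{\mathrm{op}}$, the element $\infty_{R^{\mathrm{op}}}$ is right but not left absorbing. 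Hence Proposition~\ref{prop_right_but_not_left_greatest_el} applies to $R^{\mathrm{op}}$ and yields a finite lattice $\bK$ with $|\bK|>2$ and a subsemiring $(S,\vee,\circ)$ of $(\Res_1(\bK),\vee,\circ)$ fulfilling (\ref{condition_e_a1}), (\ref{condition_e_a1_leq_f}) and (\ref{condition_fb=x}), together with an isomorphism $R^{\mathrm{op}}\cong(S,\vee,\circ)$. Regarding this isomorphism as a bijection $R\to S$ and unwinding $\cdot^{\mathrm{op}}$ shows that it is in fact an isomorphism $R\cong(S,\vee,\circ^d)$, the opposite of $(S,\vee,\circ)$.

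Next I would dualise on the lattice side. Set $\bL:=\bK^d$. By Lemma~\ref{S_plus} the map $\Omega\colon f\mapsto f^+$ restricts to an isomorphism $(S,\vee,\circ)\cong(S^+,\vee^d,\circ^d)$ with $S^+\subseteq\Res_0(\bK^d)=\Res_0(\bL)$; passing to the opposite multiplications on both sides (the underlying bijection is unchanged and $(\circ^d)^d=\circ$) gives $(S,\vee,\circ^d)\cong(S^+,\vee^d,\circ)$. Then Lemma~\ref{Psi_S_plus}, applied to the finite nontrivial lattice $\bL=\bK^d$ (whose supremum is $\vee^d$), shows $(S^+,\vee^d,\circ)\cong(\Psi_\bL(S^+),\vee^d,\circ)$, the latter being a subsemiring of $(\JM(\bL_{-}),\vee^d,\circ)$. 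Composing the three isomorphisms obtained so far yields $R\cong(\Psi_\bL(S^+),\vee^d,\circ)$.

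Finally, the conditions (\ref{condition_k_a}), (\ref{condition_k_a_leq_f}) and (\ref{condition_6}) for $(\Psi_\bL(S^+),\vee^d,\circ)$ are precisely the conclusion of Lemma~\ref{lemma_derive_semiring}, whose hypotheses --- $\bK$ a finite lattice and $(S,\vee,\circ)$ a subsemiring of $(\Res_1(\bK),\vee,\circ)$ satisfying (\ref{condition_e_a1}), (\ref{condition_e_a1_leq_f}), (\ref{condition_fb=x}) --- were arranged above. The desired semilattice is $\bL_{-}=(\bK^d)_{-}$: it is finite, it is a join-semilattice because it is closed under the supremum $\vee^d$ of $\bL$ (which is the meet of $\bK$, and $x\wedge y\ne 1_\bK$ whenever $x,y\ne 1_\bK$), and it is nontrivial since $|\bL_{-}|=|\bK|-1\ge 2$. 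I expect no genuine obstacle beyond careful bookkeeping: the main difficulty is to keep the two dualities apart --- the semiring opposite $\cdot\mapsto\cdot^{\mathrm{op}}$ on one hand, and the lattice dual $\bK\mapsto\bK^d$ together with $\Res\leftrightarrow\Rd$ on the other --- and to track how the join operations ($\vee$ versus $\vee^d$) and composition orders ($\circ$ versus $\circ^d$) transform under each of the isomorphisms in the chain.
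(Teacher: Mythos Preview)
Your proposal is correct and follows essentially the same approach as the paper: pass to the opposite semiring to reduce to Proposition~\ref{prop_right_but_not_left_greatest_el}, then transport the embedding through the chain $(S,\vee,\circ)\cong(S^+,\vee^d,\circ^d)$ (Lemma~\ref{S_plus}) and $\Psi_\bL$ (Lemma~\ref{Psi_S_plus}), invoking Lemma~\ref{lemma_derive_semiring} for the three conditions; your bookkeeping of $\circ$ versus $\circ^d$ and $\vee$ versus $\vee^d$ matches the paper's reasoning almost step for step.
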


\begin{proof}
  Define $r\star s := s\cdot r$ for every $r,s\in R$.  Then
  $(R,+,\star)$ is a finite simple additively idempotent semiring such
  that $\infty_R$ is right but not left absorbing.  By
  Proposition~\ref{prop_right_but_not_left_greatest_el}, there exists
  a finite lattice $\bK=(K,\leq)$ with $|K|\geq 3$ such that
  $(R,+,\star)$ is isomorphic to a subsemiring $(S,\vee,\circ)$ of
  $(\Res_1(\bK),\vee,\circ)$ that fulfills conditions
  (\ref{condition_e_a1}), (\ref{condition_e_a1_leq_f}), and
  (\ref{condition_fb=x}).  Let $\bL:=\bK^d$, then by
  Lemma~\ref{lemma_derive_semiring}, $(\Psi_\bL(S^+),\vee^d,\circ)$ is
  a subsemiring of $(\JM(\bL_{-}),\vee^d,\circ)$, which fulfills
  (\ref{condition_k_a}), (\ref{condition_k_a_leq_f}), and
  (\ref{condition_6}).  Clearly, $\bL_-$ is nontrivial.  Because of
  $(R,+,\star)\cong(S,\vee,\circ)\cong (S^+,\vee^d,\circ^d)$ by
  Lemma~\ref{S_plus}, it holds that $(R,+,\cdot)\cong
  (S^+,\vee^d,\circ)\cong (\Psi_\bL(S^+),\vee^d,\circ)$ by
  Lemma~\ref{Psi_S_plus}.
\end{proof}

%-----------

\subsection{$\infty_R$ is absorbing}

The following proposition is \cite[Theorem 2.2]{jezek} for finite
semilattices.  Note that for a finite semilattice $\bL=(L,\leq)$ the
mappings $f_{a,b}$ with $a\in L\setminus\{1\}$ and $b\in L$ are
exactly the mappings of range at most two in $\JM_1(\bL)$.

\begin{proposition}\label{prop_f_ab_leq_f}
  Let $\bL=(L,\leq)$ be a finite nontrivial semilattice and
  $(S,\vee,\circ)$ a subsemiring of $(\JM_1(\bL),\vee,\circ)$ that
  fulfills (\ref{condition_f_ab}).  Then $(S,\vee,\circ)$ is simple
  iff it fulfills (\ref{condition_f_ab_leq_f}).
\end{proposition}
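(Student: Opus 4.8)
The plan is to prove both directions of the equivalence, using the correspondence between congruences of $(S,\vee,\circ)$ and certain structural features of the semilattice $\bL$, exactly in the spirit of the analogous result in \cite{zum} and of Lemma~\ref{lemma_e_a1_leq_f}.

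\textbf{Direction 1: simplicity implies \eqref{condition_f_ab_leq_f}.} First I would mimic the proof of Lemma~\ref{lemma_e_a1_leq_f}. Define
\[
  Z := \{f\in S \mid \forall a\in L\setminus\{1\}\ \forall b\in L:\ f_{a,b}\not\le f\},
\]
i.e.\ $Z$ is the set of $f\in S$ that fail the conclusion of \eqref{condition_f_ab_leq_f}. Let $\sim$ be the equivalence relation on $S$ whose classes are $S\setminus Z$ and the singletons $\{z\}$ for $z\in Z$. The crux is to show $\sim$ is a semiring congruence, for which it suffices to check that $S\setminus Z$ is closed under $f\mapsto f\vee h$, $f\mapsto f\circ h$, and $f\mapsto h\circ f$ for every $h\in S$: if $f_{a,b}\le f$ then clearly $f_{a,b}\le f\vee h$; for $h\circ f$ one checks $h\circ f_{a,b}\le h\circ f$ and that $h\circ f_{a,b}$ still dominates some $f_{a',b'}$ with $a'\in L\setminus\{1\}$ — indeed $h\circ f_{a,b}$ has range $\{h(b)\}\cup\{h(1)\}=\{h(b),1\}$ and equals $f_{a,h(b)}$; for $f\circ h$ one sets $a' := \bigvee\{x\mid h(x)\le a\}$, which lies in $L\setminus\{1\}$ because $h(1)=1>a$ (here $h\in\JM_1(\bL)$ is essential), and then $f_{a',b}\le f\circ f_{a,b}\le f\circ h$. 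Granting this, $\sim$ is a congruence with the nontrivial class $S\setminus Z$ (which contains all $f_{a,b}$, hence has more than one element since $|L|\ge 2$), so simplicity forces $\sim\,=S\times S$, whence $Z=\varnothing$, which is precisely \eqref{condition_f_ab_leq_f}.

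\textbf{Direction 2: \eqref{condition_f_ab} and \eqref{condition_f_ab_leq_f} imply simplicity.} Let $\sim$ be a congruence on $S$ distinct from $\id_S$, so there are $f\ne g$ in $S$ with $f\sim g$; say $f(c)\ne g(c)$ for some $c\in L$, and without loss of generality $f(c)\not\le g(c)$ (otherwise swap $f,g$). By \eqref{condition_f_ab_leq_f} pick $a\in L\setminus\{1\}$, $b\in L$ with $f_{a,b}\le f$. Now I would compose on the left and right with suitable $f_{a',b'}$ and $k$-type maps (all available by \eqref{condition_f_ab}) to manufacture, from the pair $(f,g)$, a pair of maps that are forced to be congruent but whose join with various elements collapses $S$ onto a single class. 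Concretely: using $f\sim g$ one gets $f_{a,b}\vee f \sim f_{a,b}\vee g$, and after suitable pre/post-composition one can arrange to conclude that some $f_{a_0,b_0}$ is $\sim$-related to $f_{a_0,b_1}$ with $b_0\ne b_1$, or that $f_{a_0,b_0}\sim f_{a_1,b_0}$ with $a_0\ne a_1$; from either such relation, together with the abundance of all $f_{a,b}$, one shows by a short computation (composing with more $f_{a,b}$'s, and using that these maps generate enough to reach the constant-range-$\le 2$ part of $\JM_1(\bL)$) that every $f_{a,b}$ is $\sim$-related to every other, and finally that every $f\in S$ is $\sim$-related to $\bigvee_{a,b}f_{a,b}$. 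Hence $\sim\,=S\times S$, and $S$ is simple.

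\textbf{Main obstacle.} The delicate part is the second direction: extracting from an arbitrary nontrivial congruence a ``collapsing'' relation among the $f_{a,b}$, and then propagating it to all of $S$. This is where the structure theory of $\JM_1(\bL)$ and the hypothesis \eqref{condition_f_ab} (every $f_{a,b}$ with $a\ne 1$ lies in $S$) must be used carefully; the computations are of the same flavour as \cite[Proposition~3.13]{zum} and \cite[Theorem~2.2]{jezek}, and the remark preceding the proposition — that the $f_{a,b}$ are exactly the range-$\le 2$ maps in $\JM_1(\bL)$ — is the key combinatorial input that lets one reduce everything to relations between these two-valued maps. I would handle the case analysis $f(c)\not\le g(c)$ versus $g(c)\not\le f(c)$ symmetrically, and isolate the ``one constant changed'' and ``one threshold changed'' sub-cases as separate short claims.
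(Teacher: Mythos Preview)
The paper does not actually prove this proposition; it simply records that it is \cite[Theorem~2.2]{jezek} specialized to finite semilattices. Your proposal therefore supplies what the paper omits, and both directions are along correct lines and can be completed.

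Two small repairs are needed in Direction~1. First, the displayed inequality ``$f_{a',b}\le f\circ f_{a,b}\le f\circ h$'' is garbled: what you want is $f_{a',b} = f_{a,b}\circ h \le f\circ h$, using $f_{a,b}\le f$ and right-monotonicity of $\circ$; the map $f\circ f_{a,b}$ plays no role. Second, the definition $a' := \bigvee\{x\mid h(x)\le a\}$ tacitly assumes this set is nonempty, which can fail (take $h=k_1$). When it is empty one has $f_{a,b}\circ h = k_1 = f_{a',1}$ for any $a'\ne 1$, so $f\circ h\ge k_1$ and again $f\circ h\in S\setminus Z$; this case should be stated separately.

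Your Direction~2 sketch is vaguer than necessary; the argument is in fact short and does not need case analysis on ``threshold changed versus constant changed''. From $f\sim g$ with $f(c)\not\le g(c)=:d$ (so $c\ne 1$ and $d\ne 1$), pre-composing with $f_{a,c}$ for any $a\ne 1$ gives $f_{a,f(c)}\sim f_{a,d}$, and then post-composing with $f_{d,b}$ yields $k_1\sim f_{a,b}$ for all $a\ne 1$ and $b\in L$. Condition~(\ref{condition_f_ab_leq_f}) then gives, for any $p\in S$, some $f_{a,b}\le p$, whence $p = p\vee f_{a,b}\sim p\vee k_1 = k_1$. This is presumably what the cited Je\v{z}ek--Kepka proof does.
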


Recall that a finite idempotent semimodule $(M,+)$ satisfies~$(*)$ if
for its greatest element $\infty_M$ there exists $u\in M$ with
$\infty_M\ne u+x$ for all $x\in M\setminus\{\infty_M\}$.

\begin{proposition}\label{prop_irr_semimod_with_join-irr_greatest}
  Let $\infty_R$ be absorbing and let $(M,+)$ be a finite idempotent
  irreducible $R$-semimodule satisfying~$(*)$.  Then $(R,+,\cdot)$ is
  isomorphic to a subsemiring of $(\JM_1(M,\leq),+,\circ)$ that
  fulfills (\ref{condition_f_ab}) and (\ref{condition_f_ab_leq_f}).
\end{proposition}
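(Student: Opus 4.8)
The plan is to build the embedding exactly as in the earlier absorbing-or-not cases, using the faithful irreducible semimodule $(M,+)$ and Remark~\ref{remark_embed}, and then to verify the two structural conditions from the density results. First I would observe that since $\infty_R$ is absorbing, Proposition~\ref{prop_greatest_element}-\ref{infty_2}.\ gives $R\infty_M=\{\infty_M\}$; combined with Proposition~\ref{prop_greatest_element}-\ref{infty_1}.\ this means $\infty_Rx=\infty_M$ for \emph{all} $x\in M$ unless $(M,+)$ has a neutral element $0_M$ with $R0_M=\{0_M\}$ and $x=0_M$ — but in either case $T_r(1_M)=T_r(\infty_M)=r\infty_M=\infty_M$ for every $r\in R$, so $T(R)\subseteq\JM_1(M,\le)$. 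Thus by Remark~\ref{remark_embed}, $(R,+,\cdot)\cong(T(R),+,\circ)$ is a subsemiring of $(\JM_1(M,\le),+,\circ)$, and $(M,\le)$ is a finite nontrivial semilattice (in fact $|M|\ge 3$ is not needed, but $|M|>1$ holds because $(M,+)$ is non-quasitrivial).

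The substantive step is condition~(\ref{condition_f_ab}), i.e.\ $f_{a,b}\in T(R)$ for every $a\in M\setminus\{\infty_M\}$ and every $b\in M$. Here I would invoke property~$(*)$: it supplies an element $u\in M$ with $\infty_M\ne u+x$ for all $x\in M\setminus\{\infty_M\}$. Replacing $u$ by a minimal element $u'\le u$ below it, one still has $\infty_M\ne u'+x$ for all $x\ne\infty_M$ (since $u'+x\le u+x$, and $u+x\ne\infty_M$ forces $u'+x\ne\infty_M$), so $u'\in\Min(M,\le)$ witnesses~$(*)$. Together with $R\infty_M=\{\infty_M\}$ this is exactly the hypothesis of Proposition~\ref{prop_r_au}, which yields $r_{a,u'}\in R$ for every $a\in M\setminus\{\infty_M\}$; recall $r_{a,u'}$ acts as $x\mapsto u'$ if $x\le a$ and $x\mapsto\infty_M$ otherwise, so $T_{r_{a,u'}}=f_{a,u'}$ in the $\JM_1$-notation (with $1=\infty_M$ there). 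To get arbitrary $b$, fix $a$ and note that for any $b\in M$ one can pick (using Proposition~\ref{prop_Ra=M}) an element $c\in M$ with $Rc=M$, hence $s\in R$ with $sc=b$; but more directly, I would take an $a'$ with $Ra'=M$ if $a$ itself does not generate $M$, and compose: $f_{a,b}=k_b'\vee f_{a,u'}$-type manipulations inside $\JM_1$ don't stay in $T(R)$ automatically, so instead I would produce $b$ as $ra''$ for a suitable $r$ and precompose. Concretely: choose $r\in R$ with $r\,u' \vee(\text{something})$ — the clean route is to note $f_{a,b}=g\circ f_{a,u'}$ whenever $g\in\JM_1$ sends $u'\mapsto b$; such a $g$ exists in $T(R)$ because some element of $M$ generates $M$ via Proposition~\ref{prop_Ra=M}, so $u'$ either generates $M$ (then pick $r$ with $ru'=b$ and $g=T_r$, noting $r\infty_M=\infty_M$ so $g\in\JM_1$) or $u'$ is the non-generating exceptional element, which by Proposition~\ref{prop_Ra=M} would force $u'=0_M$ with $Ru'=\{u'\}$ — but then $u'=0_M$ and $f_{a,0_M}\in R$ directly by Proposition~\ref{prop_r_a0} if $\infty_R$ were not left absorbing, which is not our case, so we instead use that $0_M$ below everything lets us reach any $b$ as $r\cdot(\text{generator})$ composed appropriately.

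Once~(\ref{condition_f_ab}) is in hand, condition~(\ref{condition_f_ab_leq_f}) is immediate from Proposition~\ref{prop_f_ab_leq_f}: $(R,+,\cdot)\cong(T(R),+,\circ)$ is simple by hypothesis, it sits inside $\JM_1(M,\le)$ and satisfies~(\ref{condition_f_ab}), so the "simple $\Rightarrow$~(\ref{condition_f_ab_leq_f})" direction of that proposition applies verbatim. I expect the main obstacle to be precisely the reduction from $f_{a,u'}$ to $f_{a,b}$ for general $b$ — i.e.\ checking that post-composition by an appropriate $T_r$ stays within $\JM_1$ and realizes exactly $f_{a,b}$, while carefully handling the degenerate sub-case where the witness $u'$ coincides with a neutral element $0_M$ satisfying $R0_M=\{0_M\}$. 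Everything else is bookkeeping: translating between the "$\infty_M$" notation of semimodules and the "$1$" notation of $\JM_1$, and invoking Remark~\ref{remark_embed}, Proposition~\ref{prop_greatest_element}, Proposition~\ref{prop_Ra=M}, Proposition~\ref{prop_r_au}, and Proposition~\ref{prop_f_ab_leq_f} in sequence.
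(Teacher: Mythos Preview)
Your overall plan coincides with the paper's proof: embed via Remark~\ref{remark_embed}, observe $T(R)\subseteq\JM_1(M,\le)$ from $R\infty_M=\{\infty_M\}$, pick a minimal witness $u$ for~$(*)$, invoke Proposition~\ref{prop_r_au} to obtain $r_{a,u}\in R$, upgrade to $r_{a,b}$ for arbitrary $b$ by left-multiplying with some $s\in R$ satisfying $su=b$, and finish~(\ref{condition_f_ab_leq_f}) via Proposition~\ref{prop_f_ab_leq_f}.

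The one genuine gap is exactly the spot you flag as the ``main obstacle'': the degenerate sub-case $u'=0_M$ with $R0_M=\{0_M\}$. Your proposed workaround (``reach any $b$ as $r\cdot(\text{generator})$ composed appropriately'') does not work, because if $R0_M=\{0_M\}$ then $T_r(0_M)=0_M$ for \emph{every} $r\in R$, so no post-composition $T_r\circ f_{a,0_M}$ can ever produce $f_{a,b}$ with $b\ne 0_M$. What you are missing is that this sub-case is \emph{impossible} here: Proposition~\ref{prop_greatest_element}-\ref{infty_4}.\ says that when $\infty_R$ is left absorbing (it is, being absorbing), $(M,+)$ is quotient-irreducible, $|M|>2$, and $0_M\in M$, one has $R0_M=M$, not $\{0_M\}$. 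Combined with Proposition~\ref{prop_Ra=M} for the case $u'\ne 0_M$, this gives $Ru'=M$ unconditionally, so the required $s$ with $su'=b$ exists and $r_{a,b}=s\,r_{a,u'}\in R$. (Note also that $|M|>2$ follows from $|R|>2$, which you should record before invoking~\ref{infty_4}.) Once you insert this single reference, your argument is complete and identical to the paper's.
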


\begin{proof}
  By Remark~\ref{remark_embed}, the semiring $(R,+,\cdot)$ is
  isomorphic to a subsemiring of $({\JM(M,\leq)},+,\circ)$, and
  because of $R\infty_M=\{\infty_M\}$ by
  Proposition~\ref{prop_greatest_element} even of
  $(\JM_1(M,\leq),+,\circ)$.  Since $|R|>2$ we must have that $|M|>2$
  as well.

  There exists $u\in M$ with $\infty_M \ne u+x$ for all $x\in
  M\setminus\{\infty_M\}$, and without loss of generality we may
  assume that $u\in\Min(M,\le)$.  By Proposition~\ref{prop_r_au} we
  have that $r_{a,u}\in R$ for every $a\in M\setminus\{\infty_M\}$.
  By Proposition~\ref{prop_Ra=M} and
  Proposition~\ref{prop_greatest_element}-\ref{infty_4}.\ it follows
  that $Ru=M$.  Hence, for all $b\in M$ there exists $s\in R$ such
  that $b=su$ and therefore $r_{a,b} = s\,r_{a,u}\in R$.  Thus,
  (\ref{condition_f_ab}) is fulfilled and (\ref{condition_f_ab_leq_f})
  follows by Proposition~\ref{prop_f_ab_leq_f}.
\end{proof}

%------------------------------------------

\section{Subsemirings of $(\JM(\bL),\vee,\circ)$}%
\label{chapter_subsemirings}

In this section we consider the other direction, i.e., we start with a
semilattice~$\bL$ and show that certain subsemirings of
$(\JM(\bL),\vee,\circ)$ are simple.

\begin{proposition}\label{prop_second_part_right_but_not_left_greatest}
  Let $\bL=(L,\leq)$ be a finite lattice with more than two elements
  and let $(R,\vee,\circ)$ be a subsemiring of
  $(\Res_1(\bL),\vee,\circ)$, which fulfills (\ref{condition_e_a1}),
  (\ref{condition_e_a1_leq_f}), and (\ref{condition_fb=x}).  Then
  $(R,\vee,\circ)$ is a finite simple additively idempotent semiring
  and the greatest element is right but not left absorbing.
\end{proposition}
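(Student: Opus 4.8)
$(R,\vee,\circ)$ is finite because $\Res_1(\bL)$ is, and additively idempotent because $\vee$ is a pointwise supremum. Its greatest element is $f_{0,0}$: this map lies in $R$ by (\ref{condition_e_a1}) applied with $a=0$ (note $0\ne 1$ as $|L|>2$) and it dominates every element of $\Res_1(\bL)$, so $\infty_R=f_{0,0}$. For $s\in R$ and $x\in L$ we have $(s\circ\infty_R)(x)=s(\infty_R(x))$, which is $s(0)=0$ for $x=0$ and $s(1)=1$ otherwise; hence $s\circ\infty_R=\infty_R$, so $\infty_R$ is right absorbing. Choosing $a\in L\setminus\{0,1\}$ (possible since $|L|>2$) we have $f_{a,0}\in R$ by (\ref{condition_e_a1}) and $(\infty_R\circ f_{a,0})(a)=\infty_R(0)=0\ne 1=\infty_R(a)$, so $\infty_R$ is not left absorbing.

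\textbf{Reduction for simplicity.} Let $\sim$ be a congruence with $\sim\ne\id_R$; I want $\sim=R\times R$. First I reduce to showing $f_{a,0}\sim\infty_R$ for every $a\in L\setminus\{1\}$. Granting this, any $f\in R$ satisfies $f_{a,0}\le f$ for some $a\ne 1$ by (\ref{condition_e_a1_leq_f}), whence $f=f\vee f_{a,0}\sim f\vee\infty_R=\infty_R$, so $R$ is a single $\sim$-class.

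\textbf{Producing relations, and the atom step.} Choose $p\sim q$ with $p\ne q$. Since $g\mapsto g^+$ is injective on $\Res(\bL)$ (it is the isomorphism discussed before Lemma~\ref{S_plus}), there is $a\in L\setminus\{1\}$ with $p^+(a)\ne q^+(a)$; as $f_{a,0}\circ g=f_{g^+(a),0}$ for $g\in\Res_1(\bL)$ (Lemma~\ref{lemma1}), left-composition with $f_{a,0}$ yields $f_{c,0}\sim f_{d,0}$ with $c:=p^+(a)\ne d:=q^+(a)$, $c,d\in L\setminus\{1\}$. Joining both sides with $f_{c,0}$ and using $f_{c,0}\vee f_{d,0}=f_{c\wedge d,0}$ (here $\bL$ is a lattice), then relabelling $c,d$ so that $c\wedge d<c$, we get $f_{c,0}\sim f_{c',0}$ with $c':=c\wedge d<c$, $c>0$, and $c,c'\in L\setminus\{1\}$. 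Now fix an atom $\alpha$ of $\bL$ (atoms exist and are $\ne 1$ since $|L|>2$). By (\ref{condition_fb=x}) with $\alpha\in L\setminus\{0,1\}$ and value $c$, pick $g\in R$ with $g(\alpha)=c$ and set $h:=f_{\alpha,0}\vee g\in R$. As $\alpha$ is an atom, $h(0)=g(0)=0$, $h(\alpha)=g(\alpha)=c$, and $h(x)=1$ for $x\not\le\alpha$. Then $f_{c,0}\circ h$ is $0$ at $0$ and at $\alpha$ (since $f_{c,0}(c)=0$) and $1$ elsewhere (since $f_{c,0}(1)=1$), i.e.\ $f_{c,0}\circ h=f_{\alpha,0}$; and $f_{c',0}\circ h$ is $0$ only at $0$ (since $f_{c',0}(c)=1$ because $c\not\le c'$), i.e.\ $f_{c',0}\circ h=\infty_R$. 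By right-compatibility of $\sim$ this gives $f_{\alpha,0}\sim\infty_R$ for every atom $\alpha$ of $\bL$.

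\textbf{The crux: from atoms to all of $L\setminus\{1\}$.} This last passage is the technical heart. I would argue by induction on the height of $a$ in $\bL$: if $a$ is not an atom it has a lower neighbour $a_1$ with $0<a_1<a$, so $f_{a_1,0}\sim\infty_R$ by induction; then, right-composing $f_{a_1,0}\sim\infty_R$ with a map $h\in R$ satisfying $h^+(a_1)=a$ and $h(x)\ne 0$ for all $x\ne 0$, one obtains $f_{a,0}=f_{a_1,0}\circ h\sim f_{0,0}\circ h=\infty_R$. The obstacle is constructing such an $h$ \emph{inside} $R$: condition (\ref{condition_fb=x}) only prescribes a member of $R$ at a single point, whereas $h$ must be small ($\le a_1$) simultaneously on the whole down-set of $a$ and nonzero off $0$; one assembles it as a join of point‑prescribed maps, each masked outside a small set by a suitable $f_{b,0}$, and it is this bookkeeping (and the case analysis on the structure of $a_\downarrow$) that requires care. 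Alternatively, one may avoid the direct computation by transporting $(R,\vee,\circ)$, via the duality Lemmas~\ref{S_plus}, \ref{Psi_S_plus} and \ref{lemma_derive_semiring}, to a subsemiring of $\JM$ of a finite semilattice fulfilling (\ref{condition_k_a}), (\ref{condition_k_a_leq_f}) and (\ref{condition_6}), and establishing simplicity there; since a semiring and its opposite have the same congruences this yields the claim. The statement that $\infty_R$ is right but not left absorbing was already settled in the first paragraph.
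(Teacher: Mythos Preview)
Your ``easy parts'' and the reduction strategy are fine and match the paper.  The gap is in your ``crux'' step, and the point is that it is not a gap at all: your own atom argument already proves what you need, for \emph{every} $z\in L\setminus\{0,1\}$, not just for atoms.  Indeed, take any $z\in L\setminus\{0,1\}$, choose $g\in R$ with $g(z)=c$ by~(\ref{condition_fb=x}), and set $h:=f_{z,0}\vee g$.  For $x\le z$ one has $h(x)=g(x)\le g(z)=c$, so $(f_{c,0}\circ h)(x)=0$; for $x\not\le z$ one has $h(x)=1$, so $(f_{c,0}\circ h)(x)=1$; hence $f_{c,0}\circ h=f_{z,0}$.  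On the other hand $f_{c',0}\circ h=f_{y,0}$ for some $y\in L\setminus\{1\}$ (by Lemma~\ref{lemma1}), and since $h(z)=c\not\le c'$ we get $f_{y,0}(z)=1$, so $z\not\le y$; combined with $f_{z,0}\le f_{y,0}$ this forces $y<z$.  Thus $f_{z,0}\sim f_{y,0}$ with $y<z$, and by iterating this descent you obtain $f_{z,0}\sim f_{0,0}=\infty_R$ for all $z\in L\setminus\{1\}$.  This is precisely the paper's proof; there is no need to build an $h$ with $h^+(a_1)=a$, and no need for the duality detour.

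A minor remark on your ``producing relations'' paragraph: going through residuals is correct but overcomplicated.  If $p\ne q$ with $p\sim q$, pick $x$ with (say) $p(x)\not\le q(x)=:a$, and compose on the left with $f_{a,0}$ to get $f_{b,0}\sim f_{c,0}$ with $f_{b,0}(x)=1$, $f_{c,0}(x)=0$, hence $c\not\le b$; this is all you need for the descent argument above (with $c$ and $c'$ replaced by $c$ and $b$).  The extra manoeuvre with $c\wedge d$ and the relabelling is unnecessary.
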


\begin{proof}
  It is clear that $(R,\vee,\circ)$ is a finite additively idempotent
  semiring; its greatest element is $\infty_R = f_{0,0}$.  It is easy
  to see that each element $f_{a,0}\in R$, where $a\in
  L\setminus\{1\}$, is right absorbing, hence in particular $f_{0,0}$
  is right and not left absorbing.

  To prove simplicity, let $\sim$ be a congruence on $(R,\vee,\circ)$,
  and suppose that $\sim\,\neq\id_R$, i.e., there are $f,g\in R$ such
  that $f\ne g$ and $f\sim g$.  Hence there exists $x\in L$ such that
  $f(x)\ne g(x)$, and we may assume that $f(x)\not\le g(x)=:a$.  Then
  we have $f_{a,0}\in R$, so that $f_{a,0}\circ f\sim f_{a,0}\circ g$,
  and there are $b,c\in L$ such that $f_{a,0}\circ f=f_{b,0}$ and
  $f_{a,0}\circ g=f_{c,0}$.  Furthermore, $f_{b,0}(x)=1$ and
  $f_{c,0}(x)=0$, so that $c\not\le b$.  We have shown that there are
  elements $b,c\in L$ with $c\not\le b$ such that $f_{b,0}\sim
  f_{c,0}$.

  Now we show that for all $z\in L\setminus\{0,1\}$ there exists $y\in
  L$, $y<z$ such that $f_{z,0}\sim f_{y,0}$.  So let $z\in
  L\setminus\{0,1\}$ and let $h\in R$ such that $h(z)=c$.  Considering
  $k:=h\vee f_{z,0}$ it is easy to see that $f_{c,0}\circ k =
  f_{z,0}$.  On the other hand there is $y\in L$ such that
  $f_{b,0}\circ k = f_{y,0}$, and it holds that $f_{y,0}(z)=1$.  From
  this and since $f_{z,0}\le f_{y,0}$ it follows $y<z$.  Furthermore,
  we have $f_{z,0} = f_{c,0}\circ k \sim f_{b,0}\circ k = f_{y,0}$ as
  desired.

  By applying the last paragraph repeatedly we see that for all $z\in
  L\setminus\{1\}$ it holds that $f_{z,0}\sim f_{0,0}$.  Now let $f\in
  R$ be arbitrary and let $z\in L\setminus\{1\}$ such that $f_{z,0}\le
  f$.  Then we get $f = f_{z,0}\vee f \sim f_{0,0}\vee f = f_{0,0}$.
  Hence $\sim\: = R\times R$, as desired.
\end{proof}

\begin{proposition}%
  \label{proposition_second_part_left_but_not_right_greatest}
  Let $\bL=(L,\leq)$ be a nontrivial finite semilattice and
  $(R,\vee,\circ)$ a subsemiring of $(\JM(\bL),\vee,\circ)$, which
  fulfills (\ref{condition_k_a}), (\ref{condition_k_a_leq_f}), and
  (\ref{condition_6}).  Then $(R,\vee,\circ)$ is a finite simple
  additively idempotent semiring and the greatest element is left but
  not right absorbing.
\end{proposition}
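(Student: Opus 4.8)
The plan is to dispose of the structural claims quickly and then prove simplicity by pushing a nontrivial congruence through all the constant maps.

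As a subsemiring of $(\JM(\bL),\vee,\circ)$, the structure $(R,\vee,\circ)$ is a finite additively idempotent semiring, the addition being the pointwise supremum (idempotent because $\vee$ in $\bL$ is). Its greatest element is $\infty_R=k_1$: indeed $k_1\in R$ by~(\ref{condition_k_a}), and $f\le k_1$ for every $f\in\JM(\bL)$ since $f(x)\le 1$. The element $k_1$ is left absorbing, because $(k_1\circ f)(x)=k_1(f(x))=1=k_1(x)$ for all $f$ and $x$; it is not right absorbing, because for any $a\in L\setminus\{1\}$ — such $a$ exists as $\bL$ is nontrivial — we have $k_a\in R$ and $k_a\circ k_1=k_{k_a(1)}=k_a\ne k_1$.

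For simplicity, let $\sim$ be a congruence on $(R,\vee,\circ)$ with $\sim\,\ne\id_R$, and pick $f,g\in R$ with $f\sim g$ and $f\ne g$. Then there is $x_0\in L$ with, say, $f(x_0)\not\le g(x_0)$; composing on the right with $k_{x_0}\in R$ gives $k_{f(x_0)}=f\circ k_{x_0}\sim g\circ k_{x_0}=k_{g(x_0)}$, so, writing $c:=f(x_0)$ and $d:=g(x_0)$, we have $k_c\sim k_d$ with $c\not\le d$. The crucial step comes next: for an arbitrary $b\in L\setminus\{1\}$, apply~(\ref{condition_6}) to the pair $(d,b)$ to obtain $h\in R$ with $h(x)=b$ for $x\le d$ and $h(x)>b$ otherwise; then $h(d)=b$ while $h(c)>b$ since $c\not\le d$, and left composition yields $k_b=h\circ k_d\sim h\circ k_c=k_{h(c)}$ with $b<h(c)$. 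Thus every $k_b$ with $b\ne1$ is $\sim$-related to some $k_{b'}$ with $b'>b$; as $L$ is finite this strictly ascending chain must reach $1$, so $k_b\sim k_1$ for every $b\in L$. Finally, given any $f\in R$, choose $a\in L$ with $k_a\le f$ by~(\ref{condition_k_a_leq_f}); then $f=k_a\vee f\sim k_1\vee f=k_1$, and therefore $\sim\,=R\times R$, as desired.

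The one step that I expect to need genuine care is the propagation: one must verify that~(\ref{condition_6}), applied with parameter $a=d$, really produces a map $h$ that is constant equal to $b$ on $d_\downarrow$ and \emph{strictly} exceeds $b$ at $c$, since it is exactly this strictness that forces the chain of constant maps to climb and terminate at $k_1$. Everything else is immediate, and the overall scheme runs parallel to the proof of Proposition~\ref{prop_second_part_right_but_not_left_greatest}, with the constant maps $k_a$ and condition~(\ref{condition_6}) taking over the roles that the maps $f_{a,0}$ and conditions~(\ref{condition_e_a1}), (\ref{condition_e_a1_leq_f}), (\ref{condition_fb=x}) play there.
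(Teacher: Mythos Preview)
Your proof is correct and follows essentially the same approach as the paper's: after identifying $k_1$ as the left but not right absorbing greatest element, you obtain two $\sim$-related constants $k_c\sim k_d$ with $c\not\le d$, then use condition~(\ref{condition_6}) (with first parameter $d$) to show that every $k_b$ with $b\ne1$ is congruent to some strictly larger $k_{b'}$, climb to $k_1$, and finish via~(\ref{condition_k_a_leq_f}). Modulo variable names this is exactly the paper's argument.
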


\begin{proof}
  It is clear that $(R,\vee,\circ)$ is a finite additively idempotent
  semiring; its greatest element is $\infty_R = k_1$.  Each element
  $k_a\in R$, where $a\in L$, is left absorbing, hence in particular
  $k_1$ is left but not right absorbing.

  To prove simplicity, let $\sim$ be a congruence on $(R,\vee,\circ)$,
  and suppose $\sim\,\neq\id_R$, i.e., there are $f,g\in R$ such that
  $f\ne g$ and $f\sim g$.  There exists $x\in L$ such that $f(x)\ne
  g(x)$, and we may assume $c:=f(x)\not\le g(x)=:b$.  Then we have
  $k_c=f\circ k_x\sim g\circ k_x=k_b$.

  Now for all $z\in L\setminus\{1\}$ there exists $y\in L$, $y>z$ such
  that $k_z\sim k_y$.  Indeed, let $h\in R$ such that $h(x)=z$ if
  $x\le b$, and $h(x)>z$ otherwise.  Then in particular $y:=h(c)>z$,
  and $k_y = h\circ k_c\sim h\circ k_b = k_z$.

  By applying the last paragraph repeatedly we see that $k_z\sim k_1$
  for all $z\in L$.  Now let $f\in R$ be arbitrary and let $z\in L$
  such that $k_z\le f$.  Then $f = k_z\vee f \sim k_1\vee f = k_1$.
  Consequently $\sim\:=R\times R$, as desired.
\end{proof}

\begin{proposition}\label{prop_semiring_with_absorbing_greatest}
  Let $\bL$ be a finite nontrivial semilattice and let
  $(R,\vee,\circ)$ be a subsemiring of $(\JM_1(\bL),\vee,\circ)$,
  which fulfills (\ref{condition_f_ab}) and
  (\ref{condition_f_ab_leq_f}). Then $(R,\vee,\circ)$ is a finite
  simple additively idempotent semiring with absorbing greatest
  element.
\end{proposition}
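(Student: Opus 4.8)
The plan is to argue along the lines of Propositions~\ref{prop_second_part_right_but_not_left_greatest} and~\ref{proposition_second_part_left_but_not_right_greatest}. That $(R,\vee,\circ)$ is a finite additively idempotent semiring is immediate, being a subsemiring of the finite semiring $(\JM_1(\bL),\vee,\circ)$ whose addition is idempotent. For any $a\in L\setminus\{1\}$ the map $f_{a,1}\in R$ (by (\ref{condition_f_ab})) is the constant map $x\mapsto 1$; since every $f\in\JM_1(\bL)$ lies pointwise below it, this is the greatest element $\infty_R$ of $R$, and it is absorbing because $f\circ\infty_R=\infty_R$ (using $f(1)=1$) and $\infty_R\circ f=\infty_R$ for all $f\in R$. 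So only simplicity needs proof, and for that it suffices to show that any congruence $\sim\,\neq\id_R$ satisfies $r\sim\infty_R$ for all $r\in R$, whence $\sim\,=R\times R$ by transitivity.

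The computational backbone will be the behaviour of the range-at-most-two maps under composition. For $a,c\in L\setminus\{1\}$ and $b,d\in L$ one checks directly that $f_{a,b}\circ f_{c,d}=f_{c,b}$ if $d\le a$ and $f_{a,b}\circ f_{c,d}=\infty_R$ otherwise, that $f_{e,g}\circ f_{c,b}=f_{c,g}$ whenever $b\le e$, and, more generally, that for arbitrary $f\in R$ one has $f_{a,b}\circ f=f_{p,b}$, where $p:=\bigvee\{y\in L\mid f(y)\le a\}$ (this set is $\vee$-closed and omits $1$, since $f(1)=1\not\le a$, so $p\in L\setminus\{1\}$ is well defined), provided that set is nonempty, and $f_{a,b}\circ f=\infty_R$ otherwise; here one uses that join-morphisms are isotone.

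Now take $f,g\in R$ with $f\sim g$ and $f\neq g$, pick $x\in L$ with, say, $f(x)\not\le g(x)=:a$ (so $a\neq1$), and fix once and for all some $b\in L\setminus\{1\}$. Composing $f\sim g$ on the left with $f_{a,b}\in R$ gives $f_{a,b}\circ f\sim f_{a,b}\circ g$. Here $f_{a,b}\circ g=f_{q,b}$ with $q:=\bigvee\{y\mid g(y)\le a\}\in L\setminus\{1\}$ (note $x\le q$), and $f_{a,b}\circ f$ is either $\infty_R$ — in which case we already have $\infty_R\sim f_{q,b}$ — or equals $f_{p,b}$ for some $p\in L\setminus\{1\}$ with $x\not\le p$, whence $q\not\le p$ since $x\le q$. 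In the latter case, composing $f_{p,b}\sim f_{q,b}$ on the right with $f_{q',q}\in R$ (any $q'\in L\setminus\{1\}$) turns the left side into $\infty_R$, precisely because $q\not\le p$, and the right side into $f_{q',b}$. Either way we have produced a relation $\infty_R\sim f_{c_0,b}$ with $c_0\in L\setminus\{1\}$, which is nontrivial because $b\neq1$.

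It remains to spread this out. Composing $\infty_R\sim f_{c_0,b}$ on the right with $f_{c,c_0}\in R$ yields $\infty_R\sim f_{c,b}$ for every $c\in L\setminus\{1\}$, and then composing on the left with $f_{b,d}\in R$ yields $\infty_R\sim f_{c,d}$ for all $c\in L\setminus\{1\}$ and $d\in L$ (using that $\infty_R$ is absorbing on both sides). Finally, for an arbitrary $r\in R$, condition (\ref{condition_f_ab_leq_f}) furnishes $a\in L\setminus\{1\}$ and $b\in L$ with $f_{a,b}\le r$, so $r=f_{a,b}\vee r\sim\infty_R\vee r=\infty_R$. Hence $\sim\,=R\times R$. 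I expect the only genuinely delicate points to be the bookkeeping that keeps every pair in the congruence nontrivial — which is exactly why $b$ is chosen in $L\setminus\{1\}$ from the start — and the verification that the situation $q\not\le p$ really does collapse the left side to $\infty_R$ under right-composition by $f_{q',q}$; that collapse is the single place where the choice of $x$ with $f(x)\not\le g(x)$ is used.
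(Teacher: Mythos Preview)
Your proof is correct. The paper's own argument is essentially a one-liner: it observes that the greatest element is $f_{a,1}=k_1$ (absorbing) and then invokes Proposition~\ref{prop_f_ab_leq_f}, which is quoted from Je\v{z}ek--Kepka~\cite{jezek} and asserts precisely that a subsemiring of $(\JM_1(\bL),\vee,\circ)$ containing all $f_{a,b}$ is simple iff condition~(\ref{condition_f_ab_leq_f}) holds. You instead supply a direct, self-contained simplicity argument in the style of Propositions~\ref{prop_second_part_right_but_not_left_greatest} and~\ref{proposition_second_part_left_but_not_right_greatest}, manipulating the range-two maps $f_{a,b}$ to collapse any nontrivial congruence down to $\infty_R\sim f_{c,d}$ for all $c,d$, and then using (\ref{condition_f_ab_leq_f}) only in the final step. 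The trade-off is clear: the paper's route is shorter but relies on an external reference, whereas yours is longer but makes the present paper internally complete on this point and exhibits exactly the same mechanism as in the neighbouring cases.
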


\begin{proof}
  Clearly, $(R,\vee,\circ)$ is finite and additively idempotent.  The
  simplicity holds by Proposition~\ref{prop_f_ab_leq_f}.  The greatest
  element is $f_{a,1}=k_1$, for arbitrary $a\in L\setminus\{1\}$,
  which is absorbing.
\end{proof}

\begin{proposition}\label{lemma_L_is_irr_semimod}
  Let $\bL=(L,\leq)$ be a finite nontrivial semilattice and
  $(R,\vee,\circ)$ a simple subsemiring of $(\JM_1(\bL),\vee,\circ)$,
  which fulfills (\ref{condition_f_ab}) and $|R|>2$. Then $(L,\vee)$
  is an irreducible $R$-semimodule.
\end{proposition}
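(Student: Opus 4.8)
The statement claims that for a finite nontrivial semilattice $\bL$ and a simple subsemiring $(R,\vee,\circ)$ of $(\JM_1(\bL),\vee,\circ)$ satisfying (\ref{condition_f_ab}) with $|R|>2$, the semilattice $(L,\vee)$ is an irreducible $R$-semimodule. There are really three things to check: that $(L,\vee)$ is an $R$-semimodule at all, that it is quotient-irreducible, and that it is sub-irreducible. The $R$-semimodule structure is routine: $R$ acts on $L$ by $f\cdot x := f(x)$, and the semimodule axioms $f(g(x))=(f\circ g)(x)$, $(f\vee g)(x)=f(x)\vee g(x)$, and $f(x\vee y)=f(x)\vee f(y)$ hold because the elements of $R$ are join-morphisms and $\vee$ is defined pointwise. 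One should also note the action is non-quasitrivial: since $|R|>2$ and $R$ contains all $f_{a,b}$ with $a\ne 1$, there are two maps in $R$ that disagree at some point of $L$ (indeed $f_{a,b}$ and $f_{a,b'}$ for $b\ne b'$ differ), so not all $f\in R$ act the same way on $L$.

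For quotient-irreducibility, let $\sim$ be a semimodule congruence on $(L,\vee)$ with $\sim\,\ne\id_L$, and pick $x\ne y$ in $L$ with $x\sim y$. Since $\sim$ respects $\vee$ we may replace $y$ by $x\vee y$ and assume $x<y$. Now I want to use the maps $f_{a,b}$ to collapse everything. Choose $a$ with $x\le a$ but $y\not\le a$ (possible since $x<y$; e.g.\ $a=x$, which is $\ne 1$ because $y\not\le x$), and any $b\in L$. Applying the congruence to $f_{a,b}$: $f_{a,b}(x)=b$ while $f_{a,b}(y)=1$, so $b\sim 1$ for \emph{every} $b\in L$, i.e.\ $\sim\,=L\times L$. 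Hence the only proper quotient is trivial.

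For sub-irreducibility, let $(N,\vee)$ be a proper $R$-subsemimodule of $(L,\vee)$; I must show it is id-quasitrivial, i.e.\ $f(n)=n$ for all $f\in R$, $n\in N$. Here is where the condition (\ref{condition_f_ab}) does the work, via the $f_{a,b}$: for $n\in N$ and any $a\in L\setminus\{1\}$ with $n\le a$, we have $f_{a,b}(n)=b\in N$ for \emph{all} $b\in L$, which would force $N=L$ unless no such pair $(a,b)$ is "available" in the needed generality. So the key sub-step is: if $n\in N$ and $n\ne 1$, then taking $a:=n$ (allowed since $n\ne1$) gives $f_{n,b}(n)=b\in N$ for all $b$, hence $N=L$, a contradiction; therefore every element of $N$ except possibly $1$ must fail to exist, i.e.\ $N\subseteq\{1\}$, so $N=\{1\}$. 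Since $f(1)=1$ for all $f\in R$ (this is exactly $R\subseteq\JM_1(\bL)$), the subsemimodule $\{1\}$ is id-quasitrivial. Combining the three parts, $(L,\vee)$ is non-quasitrivial, quotient-irreducible, and sub-irreducible, hence irreducible.

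I expect the main obstacle to be bookkeeping about the element $1$ and the edge cases when $L$ is small: one must be careful that the $a$ chosen in each argument really lies in $L\setminus\{1\}$ (which is why $x<y$, resp.\ $n\ne 1$, is invoked) and that the nontriviality of $\bL$ together with $|R|>2$ rules out degenerate situations. Simplicity of $R$ is, somewhat surprisingly, not needed for this particular proposition — the condition (\ref{condition_f_ab}) is strong enough on its own — though it is listed as a hypothesis; if the author's proof uses it, it is presumably only to streamline the non-quasitriviality step or to align with later results.
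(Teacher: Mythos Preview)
Your proof is correct and follows essentially the same approach as the paper: both use the maps $f_{a,b}$ with $a$ chosen as the smaller of two congruent elements (for quotient-irreducibility) or as a non-top element of a candidate subsemimodule (for sub-irreducibility) to force the congruence, respectively the subsemimodule, to be everything. Your observation that simplicity of $R$ is not actually used is also correct---the paper's own proof does not invoke it either.
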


\begin{proof}
  Clearly, $(L,\vee)$ is an $R$-semimodule, which is faithful and
  hence non-quasitrivial.  Let $(K,\vee)$ be an $R$-subsemimodule of
  $(L,\vee)$ with $|K|>1$.  Then there exists $a\in K$ with $a\neq
  1_\bL$ and it follows that $b=f_{a,b}(a)\in K$ for every $b\in
  L$. Thus $K=L$ and $(L,\vee)$ is consequently sub-irreducible.

  Let now $\sim$ be a semimodule congruence on $(L,\vee)$ with
  $\sim \:\ne \id_L$, i.e., there exist $a,b\in L$ with $a\neq b$ and
  $a\sim b$. Without loss of generality we can say $b\not\le a$. It
  follows $a\neq 1$. Choose $c\in L$ arbitrarily. Then
  $c=f_{a,c}(a)\sim f_{a,c}(b)=1$. Hence, $c\sim 1$ for every $c\in
  L$. Thus, $\sim \:= L\times L$ must hold. We conclude that $(L,\vee)$
  is quotient-irreducible.
\end{proof}

%------------------------------------------

\section{Main results}%
\label{chapter_main_results}

Now we are ready to establish the characterization theorems for finite
simple additively idempotent semirings of all cases mentioned in the
introduction, except Case~\ref{case_remaining}.  The first theorem
states that the finite simple additively idempotent semirings with
greatest element that is neither left nor right absorbing are exactly
the finite simple additively idempotent semirings with zero.  It
follows from Proposition~\ref{prop_has_a_zero}; the second part of
the theorem is obvious.

\begin{theorem}\label{main_neither_left_nor_right}
  Let $(R,+,\cdot)$ be a finite simple additively idempotent semiring
  with $|R|>2$ and such that $\infty_R$ is neither left nor right
  absorbing. Then $(R,+,\cdot)$ is isomorphic to a semiring as in
  Theorem~\ref{theorem_zum}.  Conversely, every semiring in
  Theorem~\ref{theorem_zum} has a greatest element, which is neither
  left nor right absorbing.
\end{theorem}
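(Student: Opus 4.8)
The plan is to prove the two directions of Theorem~\ref{main_neither_left_nor_right} by reducing to results already at hand. For the first direction, I assume $(R,+,\cdot)$ is a finite simple additively idempotent semiring with $|R|>2$ and $\infty_R$ neither left nor right absorbing. By Proposition~\ref{prop_has_a_zero}, $(R,+,\cdot)$ then has a zero, say $0_R$, which is absorbing and additively neutral. Now I can invoke the converse half of Theorem~\ref{theorem_zum}: every finite simple additively idempotent semiring with $|S|>2$ and a zero is isomorphic to a subsemiring $(R',\vee,\circ)$ of $(\Res(\bL),\vee,\circ)$ for some finite lattice $\bL$, with $e_{a,b}\in R'$ for all $a,b\in L$. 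This is exactly the form asserted in the statement, so the first direction follows immediately.

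For the converse direction, I start with a semiring as described in the hypothesis of Theorem~\ref{theorem_zum}: a finite lattice $\bL=(L,\leq)$ and a subsemiring $(R,\vee,\circ)$ of $(\Res(\bL),\vee,\circ)$ containing every $e_{a,b}$. I must show its greatest element $\infty_R$ is neither left nor right absorbing. The greatest element of $(R,\vee,\circ)$ with respect to the semiring order is $\bigvee_{f\in R} f$; since all $e_{a,b}$ lie in $R$ and $\bigvee_{a,b} e_{a,b}$ is the map sending every $x\neq 0_\bL$ to $1_\bL$ and $0_\bL$ to $0_\bL$, and since $R\subseteq\Res(\bL)$ forces every $f\in R$ to satisfy $f(0_\bL)=0_\bL$, this join is precisely $\infty_R$: the map $g$ with $g(0_\bL)=0_\bL$ and $g(x)=1_\bL$ for $x\neq 0_\bL$.

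It then remains to check $g$ is neither left nor right absorbing in $(R,\vee,\circ)$. For \emph{not left absorbing}: pick any $f\in R$ with $f\neq g$ — such $f$ exists since $|L|\geq 3$ implies $|R|>2$, e.g. $e_{a,b}$ for suitable $a,b$ — and observe $g\circ f$ need not equal $g$; concretely, take $f=e_{0_\bL,b}$ with $b\neq 0_\bL,1_\bL$ (possible as $|L|\geq 3$), which maps everything except $0_\bL$ to $b$, so $(g\circ f)(x)=g(b)=1_\bL$ for $x\neq 0_\bL$ — hmm, that gives $g$ again, so instead I would use that $g\circ f = g$ for all $f$ would force $g$ left absorbing, and to break this I evaluate at an argument: take $f$ with $f(x_0)=0_\bL$ for some $x_0\neq 0_\bL$, e.g. $f=e_{x_0,b}$ is wrong since $e_{a,b}$ sends $x\leq a$ to $0$; so $e_{x_0,b}(x_0)=0_\bL$, and then $(g\circ e_{x_0,b})(x_0)=g(0_\bL)=0_\bL\neq 1_\bL=g(x_0)$, so $g\circ e_{x_0,b}\neq g$. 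For \emph{not right absorbing}: I need some $f\in R$ with $f\circ g\neq g$; since $g(x)=1_\bL$ for $x\neq 0_\bL$, $(f\circ g)(x)=f(1_\bL)$ for $x\neq 0_\bL$, so taking $f=e_{a,b}$ with $a\neq 1_\bL$ and $b\neq 1_\bL$ gives $(f\circ g)(x)=e_{a,b}(1_\bL)=b\neq 1_\bL=g(x)$ for $x\neq 0_\bL$, so $f\circ g\neq g$.

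The main obstacle is purely bookkeeping: correctly identifying $\infty_R$ as the specific map $g$ and picking the witnessing elements $e_{a,b}$ (which requires $|L|\geq 3$, guaranteed by $|R|>2$) so that composition on each side genuinely changes $g$. There is no deep content here — the substance all sits in Proposition~\ref{prop_has_a_zero} and Theorem~\ref{theorem_zum} — so the write-up should be short, essentially the two sentences of the first direction plus the elementary verification of the converse, which the paper already signals is "obvious."
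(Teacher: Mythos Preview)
Your proposal is correct and matches the paper's approach exactly: the forward direction is Proposition~\ref{prop_has_a_zero} followed by the converse half of Theorem~\ref{theorem_zum}, and for the converse you identify $\infty_R=e_{0,1}$ and exhibit witnesses $e_{a,b}$ showing it is neither left nor right absorbing, which is precisely what the paper means by ``obvious.'' Clean up the exploratory detours (the first failed choice of $f$) in the final write-up, and note that your witnesses actually only require $|L|\ge 2$, not $|L|\ge 3$.
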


We get the following theorem from
Proposition~\ref{prop_right_but_not_left_greatest_el} and
Proposition~\ref{prop_second_part_right_but_not_left_greatest}.

\begin{theorem}\label{main_right_but_not_left}
  Let $\bL$ be a finite lattice with more than two elements and let
  $(R,\vee,\circ)$ be a subsemiring of $(\Res_1(\bL),\vee,\circ)$,
  which fulfills (\ref{condition_e_a1}), (\ref{condition_e_a1_leq_f}),
  and (\ref{condition_fb=x}). Then $(R,\vee,\circ)$ is a finite simple
  additively idempotent semiring and the greatest element is right but
  not left absorbing.  Conversely, every finite simple additively
  idempotent semiring $(S,+,\cdot)$ with $|S|>2$ and with right but
  not left absorbing greatest element is isomorphic to such a
  semiring.
\end{theorem}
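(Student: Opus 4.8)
The plan is to prove Theorem~\ref{main_right_but_not_left} by combining the two halves already established in the excerpt, namely Proposition~\ref{prop_second_part_right_but_not_left_greatest} for the forward direction and Proposition~\ref{prop_right_but_not_left_greatest_el} for the converse. The forward direction is, in fact, \emph{verbatim} the statement of Proposition~\ref{prop_second_part_right_but_not_left_greatest}: given a finite lattice $\bL$ with $|L|>2$ and a subsemiring $(R,\vee,\circ)$ of $(\Res_1(\bL),\vee,\circ)$ satisfying (\ref{condition_e_a1}), (\ref{condition_e_a1_leq_f}), and (\ref{condition_fb=x}), we conclude immediately that $(R,\vee,\circ)$ is a finite simple additively idempotent semiring whose greatest element $f_{0,0}$ is right but not left absorbing. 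So for this half there is literally nothing to do but cite the proposition.

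For the converse, let $(S,+,\cdot)$ be a finite simple additively idempotent semiring with $|S|>2$ and with greatest element $\infty_S$ that is right but not left absorbing. I would apply Proposition~\ref{prop_right_but_not_left_greatest_el} directly: it yields a finite lattice $\bL$ with more than two elements such that $(S,+,\cdot)$ is isomorphic to a subsemiring $(R,\vee,\circ)$ of $(\Res_1(\bL),\vee,\circ)$ fulfilling exactly conditions (\ref{condition_e_a1}), (\ref{condition_e_a1_leq_f}), and (\ref{condition_fb=x}). That is precisely a semiring of the form described in the first sentence of the theorem, so the converse is done.

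The only slightly delicate point, and the place where one should be a touch careful, is the matching of hypotheses between ``$\infty_R$ is right but not left absorbing'' (the phrasing used to invoke Proposition~\ref{prop_right_but_not_left_greatest_el}) and ``$|S|>2$ together with right-but-not-left absorbing greatest element'' (the phrasing in the theorem). Since $\infty_S=\sum_{s\in S}s$ is by definition the greatest element of $(S,\le)$, these are the same hypothesis, and the $|S|>2$ condition is needed precisely so that the resulting lattice $\bL$ has more than two elements (as recorded inside the proof of Proposition~\ref{prop_right_but_not_left_greatest_el}). I do not anticipate any real obstacle here; the theorem is a packaging of two prior propositions, and the proof is a two-line cross-reference.

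\begin{proof}
  The first statement is exactly
  Proposition~\ref{prop_second_part_right_but_not_left_greatest}.  For
  the converse, let $(S,+,\cdot)$ be a finite simple additively
  idempotent semiring with $|S|>2$ whose greatest element $\infty_S$
  is right but not left absorbing.  By
  Proposition~\ref{prop_right_but_not_left_greatest_el} there exists a
  finite lattice $\bL$ with more than two elements such that
  $(S,+,\cdot)$ is isomorphic to a subsemiring of
  $(\Res_1(\bL),\vee,\circ)$ which fulfills (\ref{condition_e_a1}),
  (\ref{condition_e_a1_leq_f}), and (\ref{condition_fb=x}).  This is a
  semiring of the asserted form.
\end{proof}
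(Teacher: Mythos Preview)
Your proof is correct and matches the paper's approach exactly: the paper simply states that the theorem follows from Proposition~\ref{prop_right_but_not_left_greatest_el} and Proposition~\ref{prop_second_part_right_but_not_left_greatest}, which is precisely the two-line cross-reference you give.
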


Proposition~\ref{proposition_first_part_left_but_not_right_greatest}
and
Proposition~\ref{proposition_second_part_left_but_not_right_greatest}
yield the following result.

\begin{theorem}\label{main_left_but_not_right}
  Let $\bL$ be a finite nontrivial semilattice and $(R,\vee,\circ)$ a
  subsemiring of $(\JM(\bL),\vee,\circ)$, which fulfills
  (\ref{condition_k_a}), (\ref{condition_k_a_leq_f}), and
  (\ref{condition_6}). Then $(R,\vee,\circ)$ is a finite simple
  additively idempotent semiring and the greatest element is left but
  not right absorbing. Conversely, every finite simple additively
  idempotent semiring $(S,+,\cdot)$ with $|S|>2$ and with left but not
  right absorbing greatest element is isomorphic to such a semiring.
\end{theorem}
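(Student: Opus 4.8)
The plan is to obtain this theorem by directly combining the two propositions already established for the present case, with essentially no new argument. The first assertion --- that a subsemiring $(R,\vee,\circ)$ of $(\JM(\bL),\vee,\circ)$ satisfying (\ref{condition_k_a}), (\ref{condition_k_a_leq_f}), and (\ref{condition_6}) is a finite simple additively idempotent semiring whose greatest element is left but not right absorbing --- is precisely the content of Proposition~\ref{proposition_second_part_left_but_not_right_greatest}, so for that half nothing remains to be shown.

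For the converse, I would start with an arbitrary finite simple additively idempotent semiring $(S,+,\cdot)$ with $|S|>2$ whose greatest element $\infty_S$ is left but not right absorbing, and simply apply Proposition~\ref{proposition_first_part_left_but_not_right_greatest}. That proposition produces a finite nontrivial semilattice $\bL$ together with an isomorphism of $(S,+,\cdot)$ onto a subsemiring of $(\JM(\bL),\vee,\circ)$ fulfilling (\ref{condition_k_a}), (\ref{condition_k_a_leq_f}), and (\ref{condition_6}) --- which is exactly a semiring of the form described in the first assertion. Matching the hypotheses and conclusions of the two propositions then closes the argument.

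At the level of the theorem itself there is therefore no real obstacle; the only thing to check is that the quantifiers and the three numbered conditions in the two propositions line up, which they do verbatim. The genuine work has already been discharged inside Proposition~\ref{proposition_first_part_left_but_not_right_greatest}, where the \emph{left but not right absorbing} case is reduced to the \emph{right but not left absorbing} case by passing to the opposite multiplication $r\star s:=s\cdot r$, invoking Proposition~\ref{prop_right_but_not_left_greatest_el} to embed $(R,+,\star)$ into $\Res_1(\bK)$ for a lattice $\bK$, and then transporting the resulting subsemiring through the residuation isomorphism of Lemma~\ref{S_plus} and the restriction isomorphism $\Psi_{\bK^d}$ of Lemma~\ref{Psi_S_plus}, with Lemma~\ref{lemma_derive_semiring} guaranteeing that conditions (\ref{condition_e_a1}), (\ref{condition_e_a1_leq_f}), (\ref{condition_fb=x}) are converted into (\ref{condition_k_a}), (\ref{condition_k_a_leq_f}), (\ref{condition_6}). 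So the proof of the theorem is essentially the single sentence ``This follows from Proposition~\ref{proposition_first_part_left_but_not_right_greatest} and Proposition~\ref{proposition_second_part_left_but_not_right_greatest}.''
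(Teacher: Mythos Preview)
Your proposal is correct and matches the paper's own proof exactly: the theorem is stated as an immediate consequence of Proposition~\ref{proposition_first_part_left_but_not_right_greatest} and Proposition~\ref{proposition_second_part_left_but_not_right_greatest}, with no additional argument. Your summary of the internal mechanics of Proposition~\ref{proposition_first_part_left_but_not_right_greatest} (passage to the opposite multiplication, embedding via Proposition~\ref{prop_right_but_not_left_greatest_el}, and transport through Lemmas~\ref{S_plus}, \ref{Psi_S_plus}, \ref{lemma_derive_semiring}) is also accurate.
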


The next theorem holds by
Proposition~\ref{prop_irr_semimod_with_join-irr_greatest},
Proposition~\ref{prop_semiring_with_absorbing_greatest}, and
Proposition~\ref{lemma_L_is_irr_semimod}.  Recall that we say that a
finite idempotent semimodule (or a finite semilattice) $(M,+)$
satisfies property $(*)$ if for its greatest element $\infty_M$ there
exists $u\in M$ with $\infty_M\ne u+x$ for all $x\in
M\setminus\{\infty_M\}$

\begin{theorem}\label{main_absorbing}
  Let $\bL$ be a nontrivial finite semilattice satisfying~$(*)$ and
  let $(R,\vee,\circ)$ be a subsemiring of $(\JM_1(\bL),\vee,\circ)$,
  which fulfills (\ref{condition_f_ab}) and
  (\ref{condition_f_ab_leq_f}).  Then $(R,\vee,\circ)$ is a finite
  simple additively idempotent semiring with absorbing greatest
  element and it possesses an idempotent irreducible $R$-semimodule
  satisfying~$(*)$.  Conversely, every finite simple additively
  idempotent semiring $(S,+,\cdot)$ with $|S|>2$, with absorbing
  greatest element, and which possesses an idempotent irreducible
  $S$-semimodule satisfying~$(*)$ is isomorphic to such a semiring.
\end{theorem}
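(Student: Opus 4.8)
The plan is to assemble three results already established. For the first implication, suppose $\bL=(L,\le)$ is a nontrivial finite semilattice satisfying~$(*)$ and $(R,\vee,\circ)$ is a subsemiring of $(\JM_1(\bL),\vee,\circ)$ fulfilling (\ref{condition_f_ab}) and (\ref{condition_f_ab_leq_f}). First I would invoke Proposition~\ref{prop_semiring_with_absorbing_greatest}, which gives at once that $(R,\vee,\circ)$ is a finite simple additively idempotent semiring whose greatest element $k_1$ is absorbing. It then only remains to exhibit an idempotent irreducible $R$-semimodule satisfying~$(*)$, and the obvious candidate is $(L,\vee)$ equipped with the natural action $f\cdot x:=f(x)$ for $f\in R$, $x\in L$. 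This $R$-semimodule is idempotent, and by Proposition~\ref{lemma_L_is_irr_semimod} it is irreducible; furthermore property~$(*)$ for $(L,\vee)$ is literally the hypothesis on~$\bL$, since the semimodule addition is the join of~$\bL$ and the greatest element is $1_\bL$ under either reading.

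One minor point must be cleared up: Proposition~\ref{lemma_L_is_irr_semimod} assumes $|R|>2$. This is automatic when $|L|\ge 3$, because for a minimal element $a$ of $L$ (necessarily $a\ne 1$) the maps $f_{a,b}$ with $b\in L$ are pairwise distinct and belong to $R$ by (\ref{condition_f_ab}), so $|R|\ge|L|>2$. In the remaining degenerate case $|L|=2$ one has $\JM_1(\bL)=\{\id_L,k_1\}$, hence $R=\{\id_L,k_1\}$; simplicity of this two-element semiring, the fact that $k_1$ is absorbing, and the fact that $(L,\vee)$ is an idempotent irreducible $R$-semimodule satisfying~$(*)$ are then all immediate.

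For the converse, let $(S,+,\cdot)$ be a finite simple additively idempotent semiring with $|S|>2$ and absorbing greatest element, and let $(M,+)$ be an idempotent irreducible $S$-semimodule satisfying~$(*)$ (in particular $(M,+)$ is finite, property~$(*)$ being formulated for finite semimodules). Since $\infty_S$ is absorbing, Proposition~\ref{prop_irr_semimod_with_join-irr_greatest} applies directly and shows that $(S,+,\cdot)$ is isomorphic to a subsemiring of $(\JM_1(M,\le),+,\circ)$ fulfilling (\ref{condition_f_ab}) and (\ref{condition_f_ab_leq_f}). Finally $(M,\le)$ is a finite semilattice (a finite idempotent commutative semigroup is a semilattice), it is nontrivial because an irreducible semimodule is non-quasitrivial and hence has more than one element, and it satisfies~$(*)$ by assumption. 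Thus $(S,+,\cdot)$ is isomorphic to a semiring of precisely the form described in the first part of the theorem, which completes the argument.

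I do not anticipate a genuine obstacle: the substantive work already sits inside the density results underlying Proposition~\ref{prop_irr_semimod_with_join-irr_greatest} and inside the simplicity argument behind Proposition~\ref{prop_semiring_with_absorbing_greatest}. The only things demanding attention are bookkeeping matters --- keeping the same semilattice and semimodule in play throughout, observing that ``$\bL$ satisfies~$(*)$'' and ``the $R$-semimodule $(L,\vee)$ satisfies~$(*)$'' say the same thing, and disposing of the two-element degenerate case in the forward direction.
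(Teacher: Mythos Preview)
Your proposal is correct and follows precisely the route the paper takes: the theorem is assembled from Proposition~\ref{prop_semiring_with_absorbing_greatest}, Proposition~\ref{lemma_L_is_irr_semimod}, and Proposition~\ref{prop_irr_semimod_with_join-irr_greatest}. If anything, you are slightly more careful than the paper, which simply cites the three propositions without explicitly disposing of the degenerate case $|L|=2$ needed to meet the hypothesis $|R|>2$ of Proposition~\ref{lemma_L_is_irr_semimod}.
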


%------------------------------------------

\section{Isomorphic semirings}%
\label{chapter_isomorphic_semirings}

In this section we show that if we have two semirings as in
Theorem~\ref{main_right_but_not_left},
Theorem~\ref{main_left_but_not_right}, or 
Theorem~\ref{main_absorbing} that are isomorphic, then the
corresponding semilattices have to be isomorphic as well.  In
\cite{zum} the same was done for semirings as in
Theorem~\ref{theorem_zum} (Theorem~\ref{main_neither_left_nor_right}).

An \textit{order isomorphism} (resp.\ \textit{dual order isomorphism})
between two ordered sets $(P,\leq)$ and $(Q,\leq)$ is a surjective
mapping $f:P\rightarrow Q$ with $x\leq y\Leftrightarrow f(x)\leq f(y)$
(resp.\ $x\leq y\Leftrightarrow f(x)\geq f(y)$) for every $x,y\in
P$. Note that a (dual) oder isomorphism is necessarily bijective.  An
\textit{order automorphism} of $(P,\leq)$ is an order isomorphism from
$(P,\leq)$ to $(P,\leq)$.

\begin{lemma}\label{lemma_f_00_R}
  Let $\bL=(L,\leq)$ be a finite lattice and $(R,\vee,\circ)$ a
  subsemiring of $(\Res_1(\bL),\vee,\circ)$ that fulfills
  (\ref{condition_e_a1}).  Then
  \begin{displaymath}
    \Gamma: L\!\setminus\!\{1\}\rightarrow f_{0,0}\circ R := 
    \{f_{0,0}\circ f\mid f\in R\} \:, \quad a\mapsto f_{a,0}
  \end{displaymath}
  is a dual order isomorphism between $(L\!\setminus\!\{1\},\le)$ and
  $(f_{0,0}\circ R,\le)$.
\end{lemma}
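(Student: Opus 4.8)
The plan is to show that $\Gamma\colon a\mapsto f_{a,0}$ is well-defined into $f_{0,0}\circ R$, that it reverses the order in both directions, and that it is surjective.

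First I would observe that $\Gamma$ maps into $f_{0,0}\circ R$: for $a\in L\setminus\{1\}$ we have $f_{a,0}\in R$ by~(\ref{condition_e_a1}), and since $f_{a,0}$ is right absorbing (as noted in the proof of Proposition~\ref{prop_second_part_right_but_not_left_greatest}), $f_{0,0}\circ f_{a,0}=f_{a,0}$; hence $f_{a,0}\in f_{0,0}\circ R$. Conversely, for the surjectivity I would take an arbitrary $f\in R$ and apply Lemma~\ref{lemma1} with $a=0$: there is some $b\in L\setminus\{1\}$ with $f_{0,0}\circ f=f_{b,0}=\Gamma(b)$, so every element of $f_{0,0}\circ R$ is in the image of $\Gamma$.

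Next I would verify the order-reversing equivalence $a\le a'\Leftrightarrow f_{a,0}\ge f_{a',0}$ for $a,a'\in L\setminus\{1\}$. Unwinding the definition, $f_{a,0}(x)=0$ if $x\le a$ and $1$ otherwise, and similarly for $a'$; so pointwise $f_{a,0}\ge f_{a',0}$ means exactly that $x\le a'$ implies $x\le a$ for all $x$, i.e.\ $a'\le a$. This is a direct computation, and it gives both directions of the equivalence simultaneously; in particular $\Gamma$ is injective. Combining with the surjectivity established above, $\Gamma$ is a dual order isomorphism between $(L\setminus\{1\},\le)$ and $(f_{0,0}\circ R,\le)$.

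I do not expect any serious obstacle here; the statement is essentially bookkeeping once Lemma~\ref{lemma1} is invoked for surjectivity. The one point that needs a little care is confirming that the pointwise order on $f_{0,0}\circ R$ (the restriction of the order on $\Res_1(\bL)$) really does correspond to the reverse of $\le$ on $L\setminus\{1\}$ \emph{within the image}, which is immediate from the explicit two-valued form of the maps $f_{a,0}$, and making sure that the excluded value $1$ is handled consistently — but since $f(1)=1$ forces the ``$b$'' produced by Lemma~\ref{lemma1} to lie in $L\setminus\{1\}$, there is no gap.
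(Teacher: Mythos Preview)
Your proof is correct and follows essentially the same route as the paper: both establish $f_{0,0}\circ R=\{f_{a,0}\mid a\in L\setminus\{1\}\}$ (one inclusion via Lemma~\ref{lemma1}, the other via $f_{0,0}\circ f_{a,0}=f_{a,0}$) and then invoke the order-reversing equivalence $a\le a'\Leftrightarrow f_{a,0}\ge f_{a',0}$. One small slip in your unwinding: $f_{a,0}\ge f_{a',0}$ pointwise means ``$x\le a$ implies $x\le a'$'' (not the direction you wrote), which indeed gives $a\le a'$, matching the equivalence you stated at the outset.
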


\begin{proof}
  First we verify $f_{0,0}\circ R=\{f_{a,0}\mid a\in
  L\setminus\{1\}\}$.  The inclusion ``$\subseteq$'' holds by
  Lemma~\ref{lemma1}.  Now let $a\in L\setminus\{1\}$.  Then
  $f_{a,0}=f_{0,0}\circ f_{a,0}\in f_{0,0}\circ R$.  This proves the
  equality and it follows that $\Gamma$ is well-defined and
  surjective.  Because of $a\le b \Leftrightarrow f_{a,0}\ge f_{b,0}$
  for all $a,b\in L\setminus\{1\}$, $\Gamma$ is a dual order
  isomorphism.
\end{proof}

\begin{lemma}\label{lemma_R_k_1}
  Let $\bL=(L,\leq)$ be a finite semilattice and $(R,\vee,\circ)$ a
  subsemiring of $(\JM(\bL),\vee,\circ)$ that fulfills
  (\ref{condition_k_a}).  Then
  \begin{displaymath}
    \Lambda: L\rightarrow R\circ k_1:=\{f\circ k_1\mid f\in R\}
    \:, \quad a\mapsto k_a
  \end{displaymath}
  is an order isomorphism between $\bL$ and $(R\circ k_1,\le)$.
\end{lemma}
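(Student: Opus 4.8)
The plan is to follow the pattern of Lemma~\ref{lemma_f_00_R}, exploiting that $k_1$ is the constant map with value $1_\bL$. The first step is to observe that for every $f\in R$ the composite $f\circ k_1$ is again constant: $(f\circ k_1)(x)=f(k_1(x))=f(1_\bL)$ for all $x\in L$, so $f\circ k_1=k_{f(1_\bL)}$. Consequently $R\circ k_1=\{k_{f(1_\bL)}\mid f\in R\}$ consists entirely of constant maps.

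Second, I would pin this set down precisely as $\{k_a\mid a\in L\}$. One inclusion is the previous observation. For the other, given $a\in L$, condition (\ref{condition_k_a}) yields $k_a\in R$; since $k_a$ is constant we have $k_a\circ k_1=k_a$, hence $k_a\in R\circ k_1$. This simultaneously shows that $\Lambda$ takes values in $R\circ k_1$, so it is well defined, and that it is surjective.

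Third, I would verify that $\Lambda$ is an order isomorphism. Since $L\ne\varnothing$, injectivity is immediate: if $k_a=k_b$, then evaluating at any point of $L$ gives $a=b$. For the order, the relation on $R\circ k_1$ is the restriction of the pointwise order on $\JM(\bL)$ (induced by the pointwise supremum $\vee$), under which $f\le g$ iff $f(x)\le g(x)$ for all $x\in L$; specializing to constant maps gives $k_a\le k_b\Leftrightarrow a\le b$. Hence $a\le b\Leftrightarrow\Lambda(a)\le\Lambda(b)$, so $\Lambda$ is an order isomorphism.

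The argument is essentially routine, with no genuine obstacle; the point that most deserves care is the bookkeeping in the second step, namely that membership of a constant map $k_a$ in $R\circ k_1$ follows from $k_a\in R$ via $k_a=k_a\circ k_1$, rather than requiring $k_1\in R$ as such — although the latter also holds by (\ref{condition_k_a}).
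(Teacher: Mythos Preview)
Your proof is correct and follows essentially the same approach as the paper's own proof: both establish $R\circ k_1=\{k_a\mid a\in L\}$ via the identities $f\circ k_1=k_{f(1)}$ and $k_a=k_a\circ k_1$, then use $a\le b\Leftrightarrow k_a\le k_b$ to conclude that $\Lambda$ is an order isomorphism. Your version is slightly more explicit about injectivity and the pointwise order, but the argument is the same.
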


\begin{proof}
  First we verify $R\circ k_1 = \{k_a\mid a\in L\}$.  Let $f\in R$.
  Then $f\circ k_1=k_{f(1)}\in \{k_a\mid a\in L\}$.  Now let $a\in L$.
  Then $k_a=k_a\circ k_1\in R\circ k_1$.  This proves the equality and
  it follows that $\Lambda$ is well-defined and surjective.  Because
  of $a\leq b\Leftrightarrow k_a\leq k_b$ for all $a,b\in L$,
  $\Lambda$ is an order isomorphism.
\end{proof}

\begin{lemma}\label{lemma_R_f_ab}
  Let $\bL=(L,\leq)$ be a finite semilattice, $(R,\vee,\circ)$ a
  subsemiring of $(\JM_1(\bL),\vee,\circ)$ that fulfills
  (\ref{condition_f_ab}) and let $a,b\in L\setminus\{1\}$.  Then
  \begin{displaymath}
    \Phi: L\rightarrow R\circ f_{a,b}:=\{f\circ f_{a,b}\mid f\in R\}
    \:, \quad c\mapsto f_{a,c}
  \end{displaymath}
  is an order isomorphism between $\bL$ and $(R\circ f_{a,b},\le)$.
\end{lemma}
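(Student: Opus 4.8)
The plan is to imitate the proof of Lemma~\ref{lemma_R_k_1}: first describe the set $R\circ f_{a,b}$ explicitly, and then read off the order properties of $\Phi$ directly from that description. (Recall that, by the paper's convention, an order isomorphism need only be surjective and satisfy $x\le y\Leftrightarrow f(x)\le f(y)$; bijectivity then follows automatically, so I need not check injectivity separately.)

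The computational heart of the argument is the composition identity $f\circ f_{a,b}=f_{a,f(b)}$, valid for every $f\in\JM_1(\bL)$. To see it, evaluate at an arbitrary $x\in L$: if $x\le a$ then $f_{a,b}(x)=b$, so the value is $f(b)$; if $x\not\le a$ then $f_{a,b}(x)=1$, so the value is $f(1)=1$ because $f\in\JM_1(\bL)$; and this is precisely $f_{a,f(b)}(x)$. Applying this with $f\in R$ gives the inclusion $R\circ f_{a,b}\subseteq\{f_{a,c}\mid c\in L\}$. For the reverse inclusion, fix $c\in L$; since $b\in L\setminus\{1\}$, condition (\ref{condition_f_ab}) provides $f_{b,c}\in R$, and $f_{b,c}(b)=c$ because $b\le b$, so the identity above yields $f_{b,c}\circ f_{a,b}=f_{a,c}$ and hence $f_{a,c}\in R\circ f_{a,b}$. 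Thus $R\circ f_{a,b}=\{f_{a,c}\mid c\in L\}$, which shows that $\Phi$ is well-defined and surjective.

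It remains to establish the equivalence $c\le d\Leftrightarrow f_{a,c}\le f_{a,d}$ for all $c,d\in L$, with $\le$ on mappings meaning the pointwise order. If $c\le d$, then for $x\le a$ we have $f_{a,c}(x)=c\le d=f_{a,d}(x)$, while for $x\not\le a$ both sides equal $1$, so $f_{a,c}\le f_{a,d}$. Conversely, evaluating at $x=a$ gives $c=f_{a,c}(a)\le f_{a,d}(a)=d$. Hence $\Phi$ is a surjection with $c\le d\Leftrightarrow\Phi(c)\le\Phi(d)$, i.e., an order isomorphism between $\bL$ and $(R\circ f_{a,b},\le)$. There is no serious obstacle in this argument; the one point requiring attention is that the hypothesis $b\ne1$ is genuinely used, via (\ref{condition_f_ab}), to secure $f_{b,c}\in R$ for every $c\in L$ in the surjectivity step.
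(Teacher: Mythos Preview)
Your proof is correct and follows essentially the same approach as the paper's own proof: both first establish the equality $R\circ f_{a,b}=\{f_{a,c}\mid c\in L\}$ via the identity $f\circ f_{a,b}=f_{a,f(b)}$ and the element $f_{b,c}\in R$, and then conclude using $c\le d\Leftrightarrow f_{a,c}\le f_{a,d}$. Your version simply spells out the computations (the case split for the composition identity and the two directions of the order equivalence) that the paper leaves implicit.
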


\begin{proof}
  First we verify $R\circ f_{a,b}=\{f_{a,c}\mid c\in L\}$.  Let $f\in
  R$.  Then $f\circ f_{a,b}=f_{a,f(b)}\in \{f_{a,c}\mid c\in L\}$.
  Now let $c\in L$.  Then $f_{a,c} = f_{b,c} \circ f_{a,b} \in R\circ
  f_{a,b}$.  This proves the equality and it follows that $\Phi$ is
  well-defined and surjective.  Since $c\leq d\Leftrightarrow
  f_{a,c}\leq f_{a,d}$ for all $c,d\in L$, $\Phi$ is an order
  isomorphism.
\end{proof}

\begin{proposition}
  Let $\bL_i=(L_i,\leq)$ be a finite lattice and $(R_i,\vee,\circ)$ a
  subsemiring of $(\Res_1(\bL_i),\vee,\circ)$ as in
  Theorem~\ref{main_right_but_not_left} for $i=1,2$. If
  $(R_1,\vee,\circ)$ and $(R_2,\vee,\circ)$ are isomorphic, then
  $\bL_1$ and $\bL_2$ are also isomorphic.
\end{proposition}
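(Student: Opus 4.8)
The plan is to reconstruct each lattice $\bL_i$, up to its greatest element, from purely intrinsic semiring data, using Lemma~\ref{lemma_f_00_R}. Write $\infty_i:=\infty_{R_i}$ for the greatest element of $(R_i,\vee,\circ)$; in the model of Theorem~\ref{main_right_but_not_left} we have $\infty_i=f_{0,0}$. The key observation is that the set $\infty_i\circ R_i=\{\infty_i\circ f\mid f\in R_i\}$ is exactly the set $\infty_i\cdot R_i$ formed with the semiring multiplication, hence is defined purely in terms of the semiring operations, and by Lemma~\ref{lemma_f_00_R} the map $a\mapsto f_{a,0}$ is a dual order isomorphism between $(L_i\setminus\{1\},\le)$ and $(\infty_i\circ R_i,\le)$ (note $R_i$ fulfils (\ref{condition_e_a1}) since it is as in Theorem~\ref{main_right_but_not_left}).

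First I would record that any semiring isomorphism $\varphi\colon R_1\to R_2$ is automatically an order isomorphism for the natural orders $x\le y:\Leftrightarrow x+y=y$, because $\varphi$ and $\varphi^{-1}$ preserve $+$. In particular $\varphi$ maps the greatest element to the greatest element, $\varphi(\infty_1)=\infty_2$, so using that $\varphi$ also preserves $\circ$ and is surjective we get $\varphi(\infty_1\circ R_1)=\varphi(\infty_1)\circ\varphi(R_1)=\infty_2\circ R_2$. Restricting $\varphi$ to the subset $\infty_1\circ R_1$ therefore yields an order isomorphism $(\infty_1\circ R_1,\le)\to(\infty_2\circ R_2,\le)$.

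Now compose: the dual order isomorphism $(L_1\setminus\{1\},\le)\to(\infty_1\circ R_1,\le)$ from Lemma~\ref{lemma_f_00_R}, the order isomorphism $(\infty_1\circ R_1,\le)\to(\infty_2\circ R_2,\le)$ just obtained, and the dual order isomorphism $(\infty_2\circ R_2,\le)\to(L_2\setminus\{1\},\le)$. The two dualities cancel, giving an order isomorphism $\psi\colon(L_1\setminus\{1\},\le)\to(L_2\setminus\{1\},\le)$. Since $1_{\bL_i}$ is the unique element of $L_i$ above every element of $L_i\setminus\{1\}$, extending $\psi$ by $1_{\bL_1}\mapsto 1_{\bL_2}$ produces an order isomorphism $\bL_1\to\bL_2$, which for finite lattices is a lattice isomorphism; hence $\bL_1\cong\bL_2$. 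I expect no real obstacle here: the only points needing care are the bookkeeping of the two dualities (so that they compose to an ordinary order isomorphism) and checking that the extension over the top element is order-preserving in both directions — both are routine, and the substantive content is entirely absorbed into Lemma~\ref{lemma_f_00_R} and the fact that semiring isomorphisms respect the natural order.
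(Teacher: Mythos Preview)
Your proposal is correct and follows essentially the same approach as the paper: both arguments use that a semiring isomorphism carries $\infty_1=f_{0,0}$ to $\infty_2=f_{0,0}$, hence $\infty_1\circ R_1$ onto $\infty_2\circ R_2$, then invoke Lemma~\ref{lemma_f_00_R} to obtain $(L_1\setminus\{1\},\le)\cong(L_2\setminus\{1\},\le)$ and extend trivially over the top element. The only difference is expository---you spell out the order-preservation of $\varphi$ and the cancellation of the two dualities more explicitly than the paper does.
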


\begin{proof}
  Let $(R_1,\vee,\circ)$ and $(R_2,\vee,\circ)$ be isomorphic and let
  $\Omega:R_1\rightarrow R_2$ be an isomorphism.  Let $0_i:=0_{\bL_i}$
  for $i=1,2$.  Since $f_{0_i,0_i}$ is the greatest element in
  $(R_i,\leq)$, we have $\Omega(f_{0_1,0_1})=f_{0_2,0_2}$.  It follows
  that $\Omega(f_{0_1,0_1}\circ R_1) = \Omega(f_{0_1,0_1})\circ
  \Omega(R_1) = f_{0_2,0_2}\circ R_2$.  Hence, $(f_{0_1,0_1}\circ
  R_1,\leq)\cong (f_{0_2,0_2}\circ R_2,\leq)$.  With
  Lemma~\ref{lemma_f_00_R} we find that
  $(L_1\setminus\{1_{\bL_1}\},\leq)\cong (f_{0_1,0_1}\circ
  R_1,\geq)\cong (f_{0_2,0_2}\circ R_2,\geq)\cong
  (L_2\setminus\{1_{\bL_2}\},\leq)$.  It trivially follows that
  $\bL_1\cong \bL_2$.
\end{proof}

\begin{proposition}
  Let $\bL_i=(L_i,\leq)$ be a finite semilattice and
  $(R_i,\vee,\circ)$ a subsemiring of $(\JM(\bL_i),\vee,\circ)$ as in
  Theorem~\ref{main_left_but_not_right} for $i=1,2$.  If
  $(R_1,\vee,\circ)$ and $(R_2,\vee,\circ)$ are isomorphic, then
  $\bL_1$ and $\bL_2$ are also isomorphic.
\end{proposition}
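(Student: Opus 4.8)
The plan is to mimic the proof of the preceding proposition, with the constant map $k_1$ playing the role that $f_{0,0}$ played there. First I would recall the relevant structural facts: by Proposition~\ref{proposition_second_part_left_but_not_right_greatest}, the greatest element of $(R_i,\le)$ is $\infty_{R_i}=k_1$, the mapping on $L_i$ that is constantly equal to $1_{\bL_i}$; and by hypothesis each $(R_i,\vee,\circ)$ fulfills (\ref{condition_k_a}), so Lemma~\ref{lemma_R_k_1} furnishes an order isomorphism $\Lambda_i:\bL_i\to (R_i\circ k_1,\le)$, $a\mapsto k_a$, for $i=1,2$.

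Next, let $\Omega:R_1\to R_2$ be a semiring isomorphism. Since $(R_i,\vee,\circ)$ is additively idempotent, the order on $R_i$ is defined by $x\le y\Leftrightarrow x\vee y=y$, so $\Omega$ is automatically an order isomorphism; in particular it sends the greatest element of $(R_1,\le)$ to that of $(R_2,\le)$, i.e.\ $\Omega(k_1)=k_1$. Using that $\Omega$ is multiplicative and bijective onto $R_2$, we obtain $\Omega(R_1\circ k_1)=\{\Omega(f\circ k_1)\mid f\in R_1\}=\{\Omega(f)\circ\Omega(k_1)\mid f\in R_1\}=R_2\circ k_1$. Hence the restriction of $\Omega$ is an order isomorphism between $(R_1\circ k_1,\le)$ and $(R_2\circ k_1,\le)$.

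Combining these, the composite $\Lambda_2^{-1}\circ\Omega|_{R_1\circ k_1}\circ\Lambda_1$ is an order isomorphism $\bL_1\to\bL_2$. Since an order isomorphism between finite semilattices necessarily preserves finite suprema, in particular binary suprema, it is a semilattice isomorphism, so $\bL_1\cong\bL_2$, as desired.

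I do not expect a genuine obstacle here: the argument is parallel to the previous proposition, and the only two points requiring a word of care are that $\Omega$ is order-preserving (because addition is idempotent, so the order is algebraically definable and preserved by any semiring isomorphism) and that an order isomorphism of semilattices is automatically a semilattice isomorphism; both are routine.
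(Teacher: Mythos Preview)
Your proof is correct and follows essentially the same approach as the paper: identify the greatest element $k_{1}$, use that $\Omega$ maps it to itself, deduce $\Omega(R_1\circ k_1)=R_2\circ k_1$, and then invoke Lemma~\ref{lemma_R_k_1} to transport this order isomorphism to one between $\bL_1$ and $\bL_2$. Your added remarks (that $\Omega$ preserves the order because the order is additively defined, and that an order isomorphism of semilattices is a semilattice isomorphism) only make explicit what the paper leaves implicit.
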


\begin{proof}
  Let $(R_1,\vee,\circ)$ and $(R_2,\vee,\circ)$ be isomorphic and let
  $\Omega:R_1\rightarrow R_2$ be an isomorphism.  Let here
  $1_i:=1_{\bL_i}$ for $i=1,2$.  Since $k_{1_i}$ is the greatest
  element in $(R_i,\leq)$, we have $\Omega(k_{1_1})=k_{1_2}$.  It
  follows that $\Omega(R_1\circ k_{1_1}) =
  \Omega(R_1)\circ\Omega(k_{1_1}) = R_2\circ k_{1_2}$.  Hence,
  $(R_1\circ k_{1_1},\leq)\cong (R_2\circ k_{1_2},\leq)$.  From
  Lemma~\ref{lemma_R_k_1} follows that $\bL_1\cong (R_1\circ
  k_{1_1},\leq)\cong (R_2\circ k_{1_2},\leq)\cong \bL_2$.
\end{proof}

\begin{proposition}\label{prop_isomorphic}
  Let $\bL_i=(L_i,\leq)$ be a finite semilattice and
  $(R_i,\vee,\circ)$ a subsemiring of $(\JM_1(\bL_i),\vee,\circ)$ as
  in Theorem~\ref{main_absorbing} for $i=1,2$.  If $(R_1,\vee,\circ)$
  and $(R_2,\vee,\circ)$ are isomorphic, then $\bL_1$ and $\bL_2$ are
  also isomorphic.
\end{proposition}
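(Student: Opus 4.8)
The plan is to mimic the proofs of the two preceding propositions, but using the order isomorphism $\Phi$ from Lemma~\ref{lemma_R_f_ab} instead of $\Lambda$ or $\Gamma$. The difficulty is that, unlike the greatest element of the semiring, there is no single distinguished element $f_{a,b}$ that an isomorphism $\Omega:R_1\to R_2$ must preserve; so I first need to pin down, in an isomorphism-invariant way, a suitable ``$f_{a,b}$-type'' element in each $R_i$.

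\begin{proof}
  Let $\Omega:R_1\to R_2$ be a semiring isomorphism, and write
  $1_i:=1_{\bL_i}$ and $0_i:=0_{\bL_i}$ (least element of the finite
  semilattice $\bL_i$).  By Proposition~\ref{prop_f_ab_leq_f} the
  mappings $f_{a,b}$ with $a\in L_i\setminus\{1_i\}$ and $b\in L_i$ are
  exactly the elements of $R_i$ of range at most two; among these, the
  ones with $b=0_i$ are precisely the minimal nonconstant elements
  whose image does not lie above any other value, i.e.\ $f_{a,0_i}$ is
  characterised inside $R_i$ by: its range has exactly two elements,
  one of which is $1_i=k_{1_i}(x)$ (the value of the greatest element
  of $R_i$) and the other is a minimal element of $(R_i\circ k_{1_i},
  \le)\cong\bL_i$.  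Since $\Omega$ preserves composition, the pointwise
  order, the greatest element $k_{1_i}$, and hence (by
  Lemma~\ref{lemma_R_k_1}-type reasoning, or directly) the set $R_i\circ
  k_{1_i}$ together with its order, $\Omega$ maps range-two elements to
  range-two elements and maps $\{f_{a,0_1}\mid a\}$ onto
  $\{f_{a,0_2}\mid a\}$.  In particular, picking any fixed $a_1\in
  L_1\setminus\{1_1\}$, there is $a_2\in L_2\setminus\{1_2\}$ with
  $\Omega(f_{a_1,0_1}) = f_{a_2,0_2}$.

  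Now apply Lemma~\ref{lemma_R_f_ab} to the pair $(a_i,0_i)$ in $\bL_i$
  (which is legitimate since $0_i\in L_i\setminus\{1_i\}$ when
  $|L_i|>2$, and $|L_i|>2$ because $|R_i|>2$).  We obtain order
  isomorphisms $\Phi_i:\bL_i\to (R_i\circ f_{a_i,0_i},\le)$,
  $c\mapsto f_{a_i,c}$.  Since $\Omega$ is a semiring isomorphism,
  \[ \Omega\bigl(R_1\circ f_{a_1,0_1}\bigr)
       = \Omega(R_1)\circ\Omega(f_{a_1,0_1})
       = R_2\circ f_{a_2,0_2} \,, \]
  and $\Omega$ is an order isomorphism on all of $R_i$, hence restricts
  to an order isomorphism $(R_1\circ f_{a_1,0_1},\le)\cong
  (R_2\circ f_{a_2,0_2},\le)$.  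Composing, $\Phi_2^{-1}\circ
  \Omega|_{R_1\circ f_{a_1,0_1}}\circ\Phi_1$ is an order isomorphism
  $\bL_1\to\bL_2$, so $\bL_1\cong\bL_2$.
\end{proof}

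The step I expect to be the real obstacle is the first paragraph: giving a genuinely isomorphism-invariant description of the elements $f_{a,0_i}$ (or at least of one of them) inside $R_i$. The greatest element, minimality, range cardinality, and the semigroup operation $\circ$ are all preserved by $\Omega$, so the description must be phrased purely in those terms; verifying that ``range at most two'' is expressible this way is exactly the content of the remark preceding Proposition~\ref{prop_f_ab_leq_f}, and identifying the $b=0_i$ subfamily among range-two elements requires a short order-theoretic argument (the value $0_i$ is the one that is $\le$ every value actually attained by some element of $R_i\circ k_{1_i}$). Once that invariant element is produced, the rest is a mechanical transport of structure through Lemma~\ref{lemma_R_f_ab}, just as in the two preceding propositions.
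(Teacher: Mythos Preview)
Your first paragraph has a genuine gap, and it is exactly where you feared.  First, $\bL_i$ is only assumed to be a finite (join-)semilattice, not a lattice, so it need not possess a least element $0_i$; the maps $f_{a,0_i}$ you want to single out may simply fail to exist.  Second, the attempted invariant description via $R_i\circ k_{1_i}$ collapses: since every $f\in R_i\subseteq\JM_1(\bL_i)$ satisfies $f(1_i)=1_i$, one has $f\circ k_{1_i}=k_{1_i}$ for every $f$, so $R_i\circ k_{1_i}=\{k_{1_i}\}$ is a singleton and carries no information about $\bL_i$.  (Lemma~\ref{lemma_R_k_1} is not applicable here, because $R_i$ need not contain the constants $k_a$ for $a\ne 1_i$.)  Third, ``range at most two'' is a property of a concrete function on $L_i$, and you have not expressed it purely in terms of $\vee$ and $\circ$; the remark before Proposition~\ref{prop_f_ab_leq_f} only identifies the range-two elements of $\JM_1(\bL)$ with the $f_{a,b}$, it says nothing about preservation under an abstract semiring isomorphism.

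The paper sidesteps all of this by using \emph{coatoms} of $(R_i,\le)$, which are manifestly preserved by any semiring (hence order) isomorphism.  One checks that every $f\in\JM_1(\bL_i)\setminus\{k_{1_i}\}$ lies below some $f_{a,b}$ with $a\in\Min(\bL_i)$ and $b\in\CoAt(\bL_i)$; since all these $f_{a,b}$ lie in $R_i$ by~(\ref{condition_f_ab}), the coatoms of $R_i$ are precisely the maps $f_{a,b}$ with $a$ minimal and $b$ a coatom.  Hence $\Omega$ sends such an $f_{a,b}\in R_1$ to some $f_{a',b'}\in R_2$ with $a'\in\Min(\bL_2)$ and $b'\in\CoAt(\bL_2)$, and then Lemma~\ref{lemma_R_f_ab} finishes exactly as in your second paragraph.
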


An element $a$ in a finite semilattice $\bL$ is called \textit{coatom}
of $\bL$ if it is a lower neighbor of $1$.  With $\CoAt(\bL)$ we
denote the set of coatoms in $\bL$.

\begin{proof}
  Let $(R_1,\vee,\circ)$ and $(R_2,\vee,\circ)$ be isomorphic and let
  $\Omega:R_1\rightarrow R_2$ be an isomorphism.  One can easily show
  that $\CoAt(\JM_1(\bL_i),\leq) = \{f_{a,b}\mid a\in \Min(\bL_i),
  b\in \CoAt(\bL_i)\}$ holds.  Thus for $a\in \Min(\bL_1)$, $b\in
  \CoAt(\bL_1)$ there exist $a'\in \Min(\bL_2)$, $b'\in \CoAt(\bL_2)$
  with $\Omega(f_{a,b})=f_{a',b'}$.  We find that $\Omega(R_1\circ
  f_{a,b})=\Omega(R_1)\circ \Omega(f_{a,b})=R_2\circ f_{a',b'}$.
  Hence, $(R_1\circ f_{a,b},\leq)\cong (R_2\circ f_{a',b'},\leq)$.
  From Lemma~\ref{lemma_R_f_ab} follows that $\bL_1\cong(R_1\circ
  f_{a,b},\leq)\cong (R_2\circ f_{a',b'},\leq)\cong \bL_2$.
\end{proof}

We remark that, along similar lines as in this section, one can also
prove that for every semiring characterized in
Section~\ref{chapter_main_results} there exists up to isomorphism a
unique idempotent irreducible semimodule (with property ($*$), in the
case of Theorem~\ref{main_absorbing}).

%------------------------------------------

\section{Neutral elements}%
\label{chapter_neutral_elements}

%-----------

\subsection{Additively neutral element}

If the greatest element $1$ of a finite lattice is join-irreducible,
then we denote the unique lower neighbor of $1$ by $1_*$.

\begin{proposition}
  Let $\bL=(L,\leq)$ be a finite lattice and $(R,\vee,\circ)$ a
  semiring as in Theorem~\ref{main_right_but_not_left}.  Then
  $(R,\vee)$ has a neutral element iff $1$ is join-irreducible.  If
  the neutral element exists, then it is right but not left absorbing.
\end{proposition}

\begin{proof}
  If $1$ is join-irreducible, then $f_{1_*,0}$ is clearly a neutral
  element in $(R,\vee)$.  If $(R,\vee)$ has a neutral element $f_0$
  then it must fulfill $f_0(a)\leq f_{a,0}(a)=0$ for every $a\in
  L\setminus\{1\}$.  For all $a,b\in L\setminus\{1\}$ we have that
  $a\vee b \neq 1$ because of $f_0(a\vee b)=f_0(a)\vee f_0(b)=0$,
  i.e., $1$ is join-irreducible.

  The element $f_{1_*,0}$ is right absorbing, since $f(0)=0$ and $f(1)=1$
  for all $f\in R$.  But it is not left absorbing, since
  $f_{1_*,0}\circ f_{a,0}=f_{a,0}$ for all $a\in L\setminus\{1\}$.
\end{proof}

\begin{proposition}
  Let $\bL=(L,\leq)$ be a finite semilattice and $(R,\vee,\circ)$ a
  semiring as in Theorem~\ref{main_left_but_not_right}.  Then
  $(R,\vee)$ has a neutral element iff $\bL$ is a lattice.  If the
  neutral element exists, then it is left but not right absorbing.
\end{proposition}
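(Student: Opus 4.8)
The plan is to mirror the structure of the preceding proposition (the $\Res_1$ case), but now working directly in $(\JM(\bL),\vee,\circ)$ where $\bL=(L,\leq)$ is a finite semilattice and $(R,\vee,\circ)$ satisfies conditions~(\ref{condition_k_a}), (\ref{condition_k_a_leq_f}), and (\ref{condition_6}). Recall from the proof of Proposition~\ref{proposition_second_part_left_but_not_right_greatest} that the greatest element of $(R,\vee)$ is $k_1$ and that each $k_a\in R$ is left but not right absorbing. For the forward direction I would first show: \emph{if $\bL$ is a lattice, then $k_0$ is a neutral element of $(R,\vee)$.} This is immediate, since for any $f\in\JM(\bL)$ and any $x\in L$ we have $(k_0\vee f)(x)=0\vee f(x)=f(x)$, using that $0=0_\bL$ is the least element; and $k_0\in R$ by~(\ref{condition_k_a}).

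For the converse I would argue: \emph{if $(R,\vee)$ has a neutral element $f_0$, then $\bL$ has a least element, hence is a lattice.} The key observation is that for every $a\in L$ we have $k_a\in R$, so $f_0\vee k_a=k_a$, which forces $f_0(x)\vee a=a$, i.e.\ $f_0(x)\leq a$, for \emph{every} $x\in L$ and \emph{every} $a\in L$. Taking $x$ arbitrary and letting $a$ range over all of $L$ shows that $f_0(x)$ is a lower bound for $L$; since $L$ is finite this means $L$ has a least element $0_\bL:=f_0(x)$ (and in particular $f_0=k_{0_\bL}$). A finite semilattice with a least element is a lattice (as recalled in Section~\ref{chapter_existence}: the infimum of any subset is the supremum of its set of lower bounds, which is nonempty since it contains the least element). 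This completes the equivalence.

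Finally, for the absorbing claim I would show that when the neutral element $k_0$ exists it is left but not right absorbing. That $k_0$ is left absorbing is clear, since $k_0\circ f=k_{k_0(\dots)}$—more precisely, $(k_0\circ f)(x)=k_0(f(x))=0$ for all $x$, so $k_0\circ f=k_0$ for every $f\in R$. To see it is not right absorbing, note $|R|>2$ forces $|L|>1$, so pick $a\in L\setminus\{0\}$; then $(k_a\circ k_0)(x)=k_a(0)=a\neq 0$, so $k_a\circ k_0=k_a\neq k_0$, witnessing that $k_0$ is not right absorbing.

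I do not anticipate a serious obstacle here: the argument is essentially a bookkeeping exercise once one notices that the constant maps $k_a$ for all $a\in L$ lie in $R$ by~(\ref{condition_k_a}), which pins down any additive identity to be $k_{0_\bL}$ and thereby forces the existence of $0_\bL$. The only point requiring a little care is the step deducing that a finite semilattice with a least element is a lattice, but this is already recorded in the paper and needs no new work.
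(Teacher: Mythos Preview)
Your proof is correct and follows essentially the same approach as the paper: both directions use condition~(\ref{condition_k_a}) to pin down the neutral element as $k_{0_\bL}$, and the non-right-absorbing witness $k_a\circ k_0=k_a$ (the paper uses $a=1$) is the same idea. Your write-up is slightly more detailed, but there is no substantive difference.
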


\begin{proof}
  If $\bL$ is a lattice, then $k_0$ is clearly a neutral element in
  $(R,\vee)$.  If $(R,\vee)$ has a neutral element $f_0$, then it must
  fulfill $f_0(x)\leq k_a(x)=a$ for every $a,x\in L$.  Thus for all
  $x\in L$, $f_0(x)$ is the least element in $\bL$, i.e., $\bL$ is a
  lattice and it holds that $f_0=k_0$.  Clearly, $k_0$ is left
  absorbing, but it is not right absorbing because of $k_1\circ k_0 =
  k_1$.
\end{proof}

\begin{proposition}
  Let $\bL$ be a finite semilattice and $(R,\vee,\circ)$ a semiring as
  in Theorem~\ref{main_absorbing}.  Then $(R,\vee)$ has a neutral
  element iff $1$ is join-irreducible and $\bL$ is a lattice.  If the
  neutral element exists, then it is neither left nor right absorbing.
\end{proposition}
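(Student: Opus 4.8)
The strategy is to combine the two preceding propositions (for the right-but-not-left and left-but-not-right cases), since a semiring as in Theorem~\ref{main_absorbing} ``contains'' both kinds of structure in a suitable sense. Concretely, the greatest element $\infty_R = k_1 = f_{a,1}$ is absorbing, and $(R,\vee,\circ)$ is a subsemiring of $(\JM_1(\bL),\vee,\circ)$ fulfilling (\ref{condition_f_ab}) and (\ref{condition_f_ab_leq_f}). The plan is to show directly that $(R,\vee)$ has a neutral element if and only if (i) $\bL$ is a lattice and (ii) $1$ is join-irreducible, and then to check the absorption claim.

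For the ``if'' direction: assume $\bL$ is a lattice with least element $0$ and that $1$ is join-irreducible with unique lower neighbor $1_*$. I claim $f_{1_*,0}$ is the neutral element of $(R,\vee)$. First, $f_{1_*,0}\in R$ by (\ref{condition_f_ab}), since $1_*\in L\setminus\{1\}$ and $0\in L$. To see it is the least element of $(R,\le)$, take any $f\in S$: for $x\le 1_*$ we have $f_{1_*,0}(x)=0\le f(x)$, and for $x\not\le 1_*$, i.e.\ $x=1$ (as $1_*$ is the unique lower neighbor of the join-irreducible $1$), we have $f_{1_*,0}(1)=1=f(1)$ because $f\in\JM_1(\bL)$; hence $f_{1_*,0}\le f$ pointwise, so $f_{1_*,0}\vee f = f$.

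For the ``only if'' direction: suppose $f_0\in R$ is the neutral element of $(R,\vee)$, so $f_0\le f$ for all $f\in S$; in particular $f_0\le f_{a,0}$ for every $a\in L\setminus\{1\}$ and $f_0\le k_0 = f_{a',0}\vee\cdots$ is not directly available, so instead argue as follows. Evaluating at any $a\in L\setminus\{1\}$: since $f_{a,b}\in S$ for all $b\in L$ and $f_{a,b}(a)=b$, we get $f_0(a)\le b$ for every $b\in L$; if $\bL$ were not a lattice there would be an $a\neq 1$ with no lower bound below all $b$, a contradiction — more carefully, fixing $a\in L\setminus\{1\}$ and letting $b$ range over $L$, $f_0(a)$ is a lower bound of all of $L$, forcing $\bL$ to have a least element $0$ and $f_0(a)=0$. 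Thus $\bL$ is a lattice. Next, join-irreducibility of $1$: for $a,b\in L\setminus\{1\}$ we have $f_0(a\vee b)=f_0(a)\vee f_0(b)=0\vee 0=0\neq 1$, so $a\vee b\neq 1$; since every element below $1$ is such an $a$, no join of two elements strictly below $1$ equals $1$, i.e.\ $1$ is join-irreducible. (One should note here $f_0=f_{1_*,0}$ then follows, consistent with the ``if'' direction.)

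Finally, the absorption claim. When the neutral element $f_0=f_{1_*,0}$ exists, it is \emph{not} right absorbing: there exists $a\in L\setminus\{1\}$ with $a\neq 1_*$ only if $|L|>2$, but even for $|L|=2$ one has $\infty_R\circ f_0 = k_1\circ f_{1_*,0}$, and $k_1\circ f_{1_*,0}$ maps everything to $1$, hence equals $k_1\neq f_0$; so $f_0$ is not right absorbing. It is also not left absorbing: $f_0\circ k_1 = k_{f_0(1)} = k_1 \neq f_0$ since $f_0(1)=1$ (as $f_0\in\JM_1(\bL)$) while $f_0\neq k_1$. Hence $f_0$ is neither left nor right absorbing, as claimed. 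The only mild subtlety is the treatment of $\bL$ possibly not having a least element in the ``only if'' direction — making rigorous that $f_0(a)$ being a common lower bound of all of $L$ forces a least element — but this is immediate in the finite setting since $L$ itself is then closed under finite meets of the relevant form; I expect no real obstacle here.
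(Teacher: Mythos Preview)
Your proof is correct and follows essentially the same approach as the paper: the ``if'' direction via $f_{1_*,0}$, the ``only if'' direction via $f_0(x)\le f_{x,b}(x)=b$ for all $b$ forcing a least element and then the join-irreducibility of $1$ from $f_0(a\vee b)=0$, and the absorption claim via the absorbing element $k_1$. The paper's argument for non-absorption is phrased more succinctly (``since $k_1$ is absorbing, $f_0$ cannot be''), but your explicit computations $k_1\circ f_0=k_1$ and $f_0\circ k_1=k_1$ amount to the same thing; note that your digression about $|L|=2$ versus $|L|>2$ is unnecessary, and you occasionally write $S$ where you mean $R$.
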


\begin{proof}
  If $1$ is join-irreducible and $\bL$ is a lattice, then $f_{1_*,0}$
  is a neutral element in $(R,\vee)$.  If $(R,\vee)$ has a neutral
  element $f_0$, then it must fulfill $f_0(x)\leq f_{x,a}(x)=a$ for
  every $a\in L$ and $x\in L\setminus\{1\}$.  Thus for $x\in
  L\setminus\{1\}$, $f_0(x)$ is the least element in $\bL$, i.e.,
  $\bL$ is a lattice and $f_0(x)=0$ holds.  Also, for all $a,b\in
  L\setminus\{1\}$ we have that $a\vee b \neq 1$ because of $f_0(a\vee
  b)=f_0(a)\vee f_0(b)=0$, i.e., $1$ is join-irreducible.  Since
  $f_{1_*,1}=k_1$ is absorbing, $f_0$ cannot be left or right
  absorbing.
\end{proof}

When considering finite simple additively idempotent semirings with an
additively neutral element, any finite idempotent irreducible
semimodule over such a semiring has a neutral element by
Proposition~\ref{prop_semirings_with_neutral_element}, and thus
satisfies $(*)$.  Hence, semirings of this kind with an absorbing
greatest element are already characterized by Theorem
\ref{main_absorbing}.

Therefore the classification of finite simple semirings with
additively neutral element is complete and can be summarized as in the
next theorem.

\begin{theorem}\label{classification_add_neutr}
  Let $(R,+,\cdot)$ be a finite  semiring with additively neutral element.
  Then $(R,+,\cdot)$ is simple iff one of the following holds:
  \begin{enumerate}
  \item $|R|\leq 2$,
  \item $(R,+,\cdot)\cong(\Mat_n(\mathbb{F}_q),+,\cdot)$ for some
    finite field $\mathbb{F}_q$ and some $n\geq 1$,
  \item $(R,+,\cdot)$ is a zero multiplication ring of prime order,
  \item $(R,+,\cdot)$ is isomorphic to a semiring as in Theorem
    \ref{theorem_zum},
  \item $(R,+,\cdot)$ is isomorphic to a semiring as in Theorem
    \ref{main_right_but_not_left}, where $1$ is join-irreducible,
  \item $(R,+,\cdot)$ is isomorphic to a semiring as in Theorem
    \ref{main_left_but_not_right}, where $\bL$ is a lattice,
  \item $(R,+,\cdot)$ is isomorphic to a semiring as in Theorem
    \ref{main_absorbing}, where $1$ is join-irreducible and $\bL$ is a
    lattice.
  \end{enumerate}
\end{theorem}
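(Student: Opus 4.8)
The plan is to prove Theorem~\ref{classification_add_neutr} by combining Monico's structure theorem (case~\ref{additively_idempotent} of the quoted theorem) with the characterization theorems of Section~\ref{chapter_main_results}, using the case distinction by the behaviour of the greatest element $\infty_R$. First I would observe that the ``if'' direction is almost entirely contained in earlier results: for cases~1--3 the semirings are simple by Monico's theorem (or directly), case~4 is the simplicity half of Theorem~\ref{theorem_zum}, and cases~5--7 are the simplicity halves of Theorems~\ref{main_right_but_not_left}, \ref{main_left_but_not_right}, and~\ref{main_absorbing} respectively; the extra side conditions ($1$ join-irreducible, $\bL$ a lattice) do not affect simplicity, so this direction needs only a sentence of assembly.

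For the ``only if'' direction I would take a finite simple semiring $(R,+,\cdot)$ with an additively neutral element $0_R$ and dispose of the small and non-idempotent cases first. If $|R|\le 2$ we are in case~1. Otherwise, by Monico's theorem, either $(R,+,\cdot)$ falls into cases~2--4 of that theorem (giving cases~2--3 here, or case~4 of that theorem — but a semiring with $R+R=\{\infty\}$ has a neutral element only if $|R|\le 2$, so this subcase collapses), or else $(R,+)$ is idempotent. So assume from now on that $(R,+,\cdot)$ is finite, simple, additively idempotent, with $|R|>2$ and with additively neutral element $0_R$; I must show it lands in one of cases~4--7. Split according to whether $\infty_R$ is: (i) neither left nor right absorbing; (ii) right but not left absorbing; (iii) left but not right absorbing; (iv) absorbing. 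In case~(i), Theorem~\ref{main_neither_left_nor_right} gives that $(R,+,\cdot)$ is isomorphic to a semiring as in Theorem~\ref{theorem_zum} (case~4). In case~(ii), Theorem~\ref{main_right_but_not_left} realizes $(R,\vee,\circ)$ as a subsemiring of $(\Res_1(\bL),\vee,\circ)$ as described; since $(R,+)$ has a neutral element, the Proposition of Section~\ref{chapter_neutral_elements}.1 (``$(R,\vee)$ has a neutral element iff $1$ is join-irreducible'') forces $1_\bL$ to be join-irreducible, giving case~5. Case~(iii) is symmetric: Theorem~\ref{main_left_but_not_right} and the corresponding Proposition force $\bL$ to be a lattice, giving case~6.

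The main point — and the only one needing more than invocation — is case~(iv), $\infty_R$ absorbing. Here I would invoke Proposition~\ref{existence_of_irreducible_semimod} to get a finite idempotent irreducible $R$-semimodule $(M,+)$, and then use Proposition~\ref{prop_semirings_with_neutral_element}: since $(R,+)$ has a neutral element, $(M,+)$ has a neutral element, hence $(M,+)$ satisfies property~$(*)$. Therefore Theorem~\ref{main_absorbing} applies and shows $(R,\vee,\circ)$ is isomorphic to a subsemiring of $(\JM_1(\bL),\vee,\circ)$ fulfilling (\ref{condition_f_ab}) and (\ref{condition_f_ab_leq_f}); the Proposition of Section~\ref{chapter_neutral_elements}.1 on this case then forces $1_\bL$ to be join-irreducible and $\bL$ to be a lattice, which is exactly case~7. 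This resolves the subtlety that Case~\ref{case_remaining} of the introduction is the one case not covered by the main theorems — but Proposition~\ref{prop_semirings_with_neutral_element} shows it simply cannot occur when an additively neutral element is present, because then $(*)$ is automatic. The remaining work is purely bookkeeping: checking that the four subcases of~(iv)-style dichotomy are exhaustive (every finite semilattice has a greatest element, and ``absorbing'' is the negation of ``not left or not right absorbing'' together with the two mixed cases), and noting the converse direction of each cited theorem to see the classification is exact rather than merely a covering.
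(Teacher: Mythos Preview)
Your proposal is correct and follows essentially the same approach as the paper, which disposes of the theorem in a single paragraph preceding its statement: the key observation in both is that Proposition~\ref{prop_semirings_with_neutral_element} forces any finite idempotent irreducible semimodule to have a neutral element and hence to satisfy~$(*)$, so that Case~\ref{case_remaining} cannot arise and Theorem~\ref{main_absorbing} covers the absorbing case; the remaining cases are then read off from Monico's theorem together with Theorems~\ref{main_neither_left_nor_right}--\ref{main_absorbing} and the propositions of Section~\ref{chapter_neutral_elements}.1. Your write-up is considerably more detailed than the paper's (which leaves the Monico case split, the collapse of Monico's case~4 under a neutral element, and the invocation of the Section~\ref{chapter_neutral_elements}.1 propositions all implicit), but the logical route is identical.
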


%-----------

\subsection{Multiplicatively neutral element}

\begin{proposition}
  Let $\bL=(L,\leq)$ be a lattice and $(R,\vee,\circ)$ a semiring as
  in Theorem~\ref{main_right_but_not_left}.  Then $(R,\circ)$ has a
  neutral element iff $\id_L\in R$.  If $\id_L\in R$ then $1_\bL$ is
  join-irreducible.
\end{proposition}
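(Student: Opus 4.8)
The plan is to characterize multiplicatively neutral elements of $(R,\vee,\circ)$ directly as mappings, using the constraint imposed by (\ref{condition_e_a1}), and then to recover join-irreducibility of $1_\bL$ from the structural shape of $\id_L$. First I would observe that if $\id_L\in R$, then $\id_L$ is obviously a neutral element for composition, giving one direction immediately. For the converse, suppose $(R,\circ)$ has a neutral element $e$. The key step is to pin down $e$ pointwise: for every $a\in L\setminus\{1\}$ we have $f_{a,0}\in R$ by (\ref{condition_e_a1}), and $f_{a,0}\circ e = f_{a,0}$ forces $e$ to behave compatibly with each $f_{a,0}$. Concretely, from $f_{a,0}(e(x)) = f_{a,0}(x)$ for all $x$ and all $a\in L\setminus\{1\}$, one deduces that $e(x)\le a \Leftrightarrow x\le a$ for every such $a$, which (since the coatoms, or more precisely all non-top principal downsets, separate points in a finite lattice) yields $e(x)=x$ for all $x\ne 1$; combined with $e(1)=1$ (as $e\in\Res_1(\bL)$) this gives $e=\id_L\in R$.

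For the last sentence, assuming $\id_L\in R$, I would argue that $1_\bL$ must be join-irreducible. The idea is that since $(R,\vee,\circ)$ is simple and $\id_L$ is now the multiplicative identity, $R$ becomes rather rigid: in particular $\id_L\vee f\in R$ for every $f\in R$, and $\id_L$ sits below the greatest element $f_{0,0}$ in $(R,\le)$ only in a controlled way. More directly, one can use that $\id_L\vee f_{a,0}=f_{a,0}\vee\id_L$ must itself lie in $R\subseteq\Res_1(\bL)$ for each $a\in L\setminus\{1\}$; evaluating at elements not below $a$ one sees this is a residuated self-map fixing $1$, and if $1$ were join-irreducible-failing, i.e.\ $1=b\vee c$ with $b,c<1$, then composing with suitable $f_{b,0},f_{c,0}$ and with $\id_L$ would produce a residuated map whose value at $1$ drops below $1$, contradicting membership in $\Res_1(\bL)$. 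I would make this precise by showing: if $b\vee c=1$ with $b,c\ne 1$, then $f_{b,0}\vee f_{c,0}$ restricted appropriately forces $\id_L$ to factor through a map that cannot fix $1$, the contradiction coming from $(\id_L)^+(1)=1$ versus the explicit residual computation as in Lemma~\ref{lemma1} and Lemma~\ref{lemma_derive_semiring}.

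The main obstacle I anticipate is the second half — extracting join-irreducibility of $1_\bL$ from $\id_L\in R$. The first half is essentially a pointwise comparison forced by the abundant supply of the maps $f_{a,0}$, and is routine. But linking the global algebraic fact ``$R$ has a two-sided identity'' to the order-theoretic fact ``$1$ is join-irreducible'' requires choosing the right test elements and the right composite; the cleanest route is probably to exploit that $\id_L\in R$ together with (\ref{condition_e_a1}) and (\ref{condition_fb=x}) puts enough maps in $R$ that $R$ cannot be a subsemiring of $\Res_1(\bL)$ unless $1$ is join-irreducible, using the characterization of $\Res_1$ via residuals and the fact that $f_{b,0}\circ\id_L=f_{b,0}$ together with $f_{b,0}\vee f_{c,0}\in R$ for $b\vee c=1$ would give an element of $R$ sending $1$ below $1$. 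I would therefore organize the proof so that the coatom/separation argument does double duty: once to identify $e=\id_L$, and once, read contrapositively, to show that the presence of $\id_L$ is incompatible with $1=b\vee c$ for $b,c<1$.
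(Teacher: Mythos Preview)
Your argument for the equivalence $e=\id_L$ is correct and in fact uses a different device than the paper: you exploit the \emph{right}-identity equation $f_{a,0}\circ e=f_{a,0}$ together with condition~(\ref{condition_e_a1}), whereas the paper uses the \emph{left}-identity equation $e\circ f=f$ together with condition~(\ref{condition_fb=x}), picking $f$ with $f(a)=x$ to get $e(x)=e(f(a))=f(a)=x$ directly. Both routes are short; yours has the mild advantage of only needing the maps $f_{a,0}$.

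The second half, however, has a real gap. Your plan is to derive a contradiction from $1=b\vee c$ with $b,c<1$ by building, via joins and composites of $\id_L$, $f_{b,0}$, $f_{c,0}$, an element of $R$ whose value at $1$ is strictly below $1$. This cannot succeed: $R\subseteq\Res_1(\bL)$ by hypothesis, and $\Res_1(\bL)$ is closed under both $\vee$ and $\circ$, so every map you produce this way still sends $1$ to $1$. In particular $f_{b,0}\vee f_{c,0}=f_{b\wedge c,0}\in\Res_1(\bL)$, and the residual computation you allude to gives $(\id_L)^+=\id_L$, yielding no contradiction.

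The condition you are not using is~(\ref{condition_e_a1_leq_f}). Applied to $\id_L\in R$ it hands you some $a\in L\setminus\{1\}$ with $f_{a,0}\le\id_L$; then $x\not\le a$ forces $1=f_{a,0}(x)\le x$, i.e.\ $x=1$, so $a$ is the unique lower neighbor of $1$ and $1$ is join-irreducible. This is the paper's argument, and it is the missing ingredient in your plan.
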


\begin{proof}
  If $\id_L\in R$ then it is clearly a neutral element of
  $(R,\circ)$.  Let $(R,\circ)$ have a neutral element $e$ and let
  $x\in L$.  For $a\in L\setminus\{0,1\}$ there exists $f\in R$ with
  $f(a)=x$.  It follows that $e(x)=e(f(a))=(e\circ f)(a)=f(a)=x$,
  i.e., $\id_L=e\in R$.

  If $\id_L\in R$ then there exists $a\in L\setminus\{1\}$ with
  $f_{a,0}\leq \id_L$, i.e., $x\not\le a$ implies $x=1$, for every
  $x\in L$.  Hence, $a$ is the unique lower neighbor of $1$, i.e., $1$
  is join-irreducible.
\end{proof}

\begin{proposition}
  Let $\bL=(L,\leq)$ be a semilattice and $(R,\vee,\circ)$ a semiring
  as in Theorem~\ref{main_left_but_not_right}.  Then $(R,\circ)$ has a
  neutral element iff $\id_L\in R$.  If $\id_L\in R$ then $\bL$ is a
  lattice.
\end{proposition}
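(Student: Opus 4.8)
The plan is to dispatch one implication trivially and prove its converse by an image-covering argument, and then to read off the lattice property from condition~(\ref{condition_k_a_leq_f}). The ``if'' direction is immediate: since the multiplication of $(R,\vee,\circ)$ is function composition, if $\id_L\in R$ then $\id_L$ is a two-sided neutral element of $(R,\circ)$, and there is nothing to prove.

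For the ``only if'' direction, suppose $e\in R$ is a neutral element of $(R,\circ)$. The key observation is that $e\circ f=f$ for every $f\in R$ forces $e(y)=y$ for every $y\in\Ima f$, so $e$ agrees with the identity on $\bigcup_{f\in R}\Ima f$; it therefore suffices to show that this union is all of $L$, for then $e=\id_L\in R$. Given $b\in L\setminus\{1\}$, condition~(\ref{condition_6}) (applied with this $b$ and an arbitrary $a\in L$) yields $f\in R$ with $f(x)=b$ for all $x\le a$; taking $x=a$ shows $b=f(a)\in\Ima f$. The only element not covered this way is the greatest element $1$, which lies in $\Ima k_1$, and $k_1\in R$ by~(\ref{condition_k_a}). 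Hence $\bigcup_{f\in R}\Ima f=L$, as required, and $e=\id_L$.

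For the last claim, assume $\id_L\in R$. By condition~(\ref{condition_k_a_leq_f}) there is $a\in L$ with $k_a\le\id_L$, i.e.\ $a=k_a(x)\le x$ for every $x\in L$; thus $a$ is a least element of the finite semilattice $\bL$, and a finite semilattice with a least element is a lattice.

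I do not expect a genuine obstacle here. The only point requiring a little care is that condition~(\ref{condition_6}) only produces join-morphisms whose relevant output values are \emph{non-top} elements, so the greatest element $1$ must be caught separately; that is exactly the role of the constant map $k_1$ supplied by~(\ref{condition_k_a}). Everything else is routine bookkeeping about composition of join-morphisms.
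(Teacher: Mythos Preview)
Your proof is correct, and the final paragraph (deducing that $\bL$ is a lattice from condition~(\ref{condition_k_a_leq_f})) is exactly what the paper does.

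For the ``only if'' direction, however, you take a detour that the paper avoids. Your image-covering argument is valid, but the case split between $b\ne 1$ (handled via condition~(\ref{condition_6})) and $b=1$ (handled via $k_1$) is unnecessary: the very condition~(\ref{condition_k_a}) that you invoke for $b=1$ already gives $k_x\in R$ for \emph{every} $x\in L$, and $x\in\Ima k_x$. The paper exploits this directly: from $e\circ k_x=k_x$ one reads off $e(x)=e(k_x(x))=k_x(x)=x$ for each $x\in L$, so $e=\id_L$ in one line, using only~(\ref{condition_k_a}). Your approach works and illustrates the general principle that a neutral element must fix every attainable image, but here the constant maps already witness every element of $L$ as an image value, so condition~(\ref{condition_6}) is not needed for this part of the argument.
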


\begin{proof}
  If $\id_L\in R$ then it is clearly a neutral element of
  $(R,\circ)$.  If $(R,\circ)$ has a neutral element $e$ then
  $e(x)=e(k_x(x))=(e\circ k_x)(x)=k_x(x)=x$ for every $x\in L$,
  i.e., $\id_L=e\in R$.

  If $\id_L\in R$ then there exists $a\in L$ with $k_a\leq \id_L$.
  Thus $a=k_a(x)\leq \id_L(x)=x$ for all $x\in L$.  Hence, $a$ is the
  least element in $\bL$, i.e., $\bL$ is a lattice.
\end{proof}

\begin{proposition}
  Let $\bL=(L,\leq)$ be a semilattice and $(R,\vee,\circ)$ a semiring
  as in Theorem~\ref{main_absorbing}.  Then $(R,\circ)$ has a neutral
  element iff $\id_L\in R$. If $\id_L\in R$ then $1_{\bL}$ is
  join-irreducible and $\bL$ is a lattice.
\end{proposition}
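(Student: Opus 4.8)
The plan is to follow closely the pattern of the two preceding propositions, using conditions (\ref{condition_f_ab}) and (\ref{condition_f_ab_leq_f}) that characterize the semirings $(R,\vee,\circ)$ appearing in Theorem~\ref{main_absorbing}.

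First I would dispose of the equivalence. The implication ``$\id_L\in R$ $\Rightarrow$ $(R,\circ)$ has a neutral element'' is immediate, since $\id_L$ is a two-sided identity for composition. For the converse, suppose $e\in R$ is a multiplicative neutral element. As $\bL$ is nontrivial, fix some $a\in L\setminus\{1\}$; by (\ref{condition_f_ab}) we have $f_{a,b}\in R$ for every $b\in L$. Evaluating $e\circ f_{a,b}=f_{a,b}$ at the point $a$ and using $f_{a,b}(a)=b$ (because $a\le a$) yields $e(b)=e(f_{a,b}(a))=(e\circ f_{a,b})(a)=f_{a,b}(a)=b$. Since $b\in L$ was arbitrary, $e=\id_L$.

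For the second statement, assume $\id_L\in R$. By (\ref{condition_f_ab_leq_f}) there are $a\in L\setminus\{1\}$ and $b\in L$ with $f_{a,b}\le\id_L$, i.e.\ $f_{a,b}(x)\le x$ for all $x\in L$. If $x\not\le a$ then $f_{a,b}(x)=1\le x$, forcing $x=1$; hence every element of $L\setminus\{1\}$ lies below $a$. Consequently, for all $u,v\in L\setminus\{1\}$ one gets $u\vee v\le a<1$, so $u\vee v\ne 1$ and $1$ is join-irreducible (with $1_*=a$). On the other hand, for $x\le a$ we have $f_{a,b}(x)=b\le x$; since every $x\ne 1$ satisfies $x\le a$, and $b\le 1$ trivially, $b$ is a lower bound of all of $L$, hence the least element of $\bL$, and $\bL$ is a lattice (with $0_\bL=b$).

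The argument is routine and fully parallel to the previous two propositions; property $(*)$ plays no role. The only point that requires a little care is the last paragraph, where one must extract both the join-irreducibility of $1$ and the existence of a least element out of the single inequality $f_{a,b}\le\id_L$ — but this is a direct consequence of the two-case description of $f_{a,b}$.
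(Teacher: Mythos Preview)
Your proof is correct and follows essentially the same approach as the paper's own proof: the identity $e(b)=e(f_{a,b}(a))=(e\circ f_{a,b})(a)=f_{a,b}(a)=b$ for the converse, and the extraction of both join-irreducibility of $1$ and the existence of a least element from a single inequality $f_{a,b}\le\id_L$ via condition~(\ref{condition_f_ab_leq_f}), match the paper line for line. Your exposition is in fact slightly more detailed than the paper's in spelling out why $u\vee v\ne 1$ for $u,v\ne 1$.
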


\begin{proof}
  If $\id_L\in R$ then it is clearly a neutral element in $(R,\circ)$.
  Let $(R,\circ)$ have a neutral element $e$ and let $x\in L$.  For
  $a\in L\setminus\{1\}$, the equality $e(x)=e(f_{a,x}(a))=(e\circ
  f_{a,x})(a)=f_{a,x}(a)=x$ holds, i.e., $\id_L=e\in R$.  
  
  If $\id_L\in R$ then there exists $a\in L\setminus\{1\}$ and $b\in
  L$ with $f_{a,b}\le \id_L$.  Thus $x\not\le a$ implies $x=1$, and
  $x\le a$ implies $b\le x$, for every $x\in L$.  Hence, $a$ is the
  unique lower neighbor of $1$, i.e., $1$ is join-irreducible.  Also,
  it follows that $b\le x$ for any $x\ne 1$, so that $b$ is the least
  element and $\bL$ is a lattice.
\end{proof}

>From the results in this section it also follows that the existence of
a multiplicatively neutral element implies the existence of an
additively neutral element, for all semirings in
Theorem~\ref{main_right_but_not_left},
Theorem~\ref{main_left_but_not_right}, and
Theorem~\ref{main_absorbing}.

%------------------------------------------

\section{The remaining case}%
\label{chapter_remaining_case}

The semirings that elude our characterizisation theorems are the
finite simple additively idempotent semirings with absorbing greatest
element, which possess a finite idempotent irreducible semimodule $M$
\emph{without} property $(*)$, so that for all $u\in M$ there is $x\in
M\setminus\{\infty_M\}$ such that $\infty_M = u+x$.  For this case we
have a construction of semirings of join-morphisms of semilattices.
In fact, we conjecture that this construction covers these semirings.
We need some preparation for it.

\begin{defn}
  Let $\bL=(L,\le)$ and $\bK=(K,\le)$ be finite semilattices and let
  $A:=\{(x,y)\in L\times K\mid x=1_{\bL} \text{ or } y=1_\bK\}$.  Then
  define
  \[ L\boxtimes K :=  L \times K  \: / \: 
  (\id_{L\times K}\cup \, A\times A) 
  \quad \text{and} \quad 
  \bL\boxtimes \bK := (L\boxtimes K \,,\, \le) \:,  \]
  where $\{(a,b)\}\le A$ and $\{(a,b)\}\le \{(c,d)\}$ iff $a\le c$ and
  $b\le d$, for all $\{(a,b)\}$, $\{(c,d)\}\in L\boxtimes
  K \,\setminus\, \{A\}$.
\end{defn}

Note that every equivalence class in $L\boxtimes K$, except $A$, has
just one element, i.e., $L\boxtimes K = \{A\} \cup \big\{\{(a,b)\}\mid
a\in L\setminus\{1_\bL\}, b\in K\setminus \{1_\bK\}\big\}$.  See
Figure~\ref{figure_amalgam} for an example.

\begin{figure}
  \begin{center}
    \begin{picture}(300,100)
      \put(0,30){%
        \begin{diagram}{20}{20}
          \Node{0}{0}{0}
          \Node{1}{20}{0}
          \Node{2}{10}{20}
          \Edge{0}{2}
          \Edge{1}{2}
        \end{diagram}
      }
      \put(30,40){$\times$}
      \put(50,20){%
        \begin{diagram}{5}{30}
          \Node{0}{0}{0}
          \Node{1}{0}{20}
          \Node{2}{0}{40}
          \Edge{0}{1}
          \Edge{1}{2}
        \end{diagram}
      }
      \put(60,40){$=$}
      \put(80,20){%
        \begin{diagram}{60}{40}
          \Node{0}{0}{0}
          \Node{1}{0}{20}
          \Node{2}{0}{40}
          \Node{3}{20}{20}
          \Node{4}{20}{40}
          \Node{5}{20}{60}
          \Node{6}{40}{0}
          \Node{7}{40}{20}
          \Node{8}{40}{40}
          \Edge{0}{1}
          \Edge{1}{2}
          \Edge{3}{4}
          \Edge{4}{5}
          \Edge{6}{7}
          \Edge{7}{8}
          \Edge{0}{3}
          \Edge{1}{4}
          \Edge{2}{5}
          \Edge{6}{3}
          \Edge{7}{4}
          \Edge{8}{5}
        \end{diagram}
      }  
      \put(10,0){$\bL$}
      \put(45,0){$\bK$}
      \put(90,0){$\bL\times \bK$}
      \put(80,60){\ColorNode{black}}
      \put(120,60){\ColorNode{black}}
      \put(100,40){\ColorNode{black}}
      \put(100,60){\ColorNode{black}}
      \put(100,80){\ColorNode{black}}
      %\put(150,40){$\Rightarrow$}
      \put(180,30){%
        \begin{diagram}{20}{20}
          \Node{0}{0}{0}
          \Node{1}{20}{0}
          \Node{2}{10}{20}
          \Edge{0}{2}
          \Edge{1}{2}
        \end{diagram}
      }
      \put(210,40){$\boxtimes$}
      \put(230,20){%
        \begin{diagram}{5}{30}
          \Node{0}{0}{0}
          \Node{1}{0}{20}
          \Node{2}{0}{40}
          \Edge{0}{1}
          \Edge{1}{2}
        \end{diagram}
      }
      \put(240,40){$=$}
      \put(260,20){%
        \begin{diagram}{5}{30}
          \Node{0}{0}{0}
          \Node{1}{0}{20}
          \Node{2}{40}{0}
          \Node{3}{40}{20}
          \Node{4}{20}{40}
          \Edge{0}{1}
          \Edge{1}{4}
          \Edge{2}{3}
          \Edge{3}{4}
          \NoDots
          \centerObjbox{4}{0}{-10}{A}
        \end{diagram}
      }
      \put(190,0){$\bL$}
      \put(225,0){$\bK$}
      \put(270,0){$\bL\boxtimes\bK$}
    \end{picture}             
  \end{center}
  \caption{Left: The direct product of two semilattices. The black
    elements are the elements of the set $A$. Right: The product
    $\bL\boxtimes\bK$ of the same semilattices.}
  \label{figure_amalgam}
\end{figure}
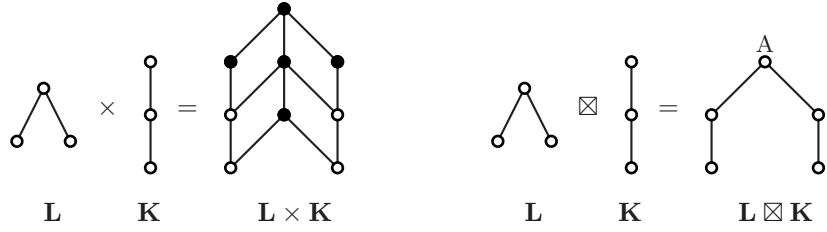

\begin{defn}
  Let $\bL=(L,\le)$ and $\bK=(K,\le)$ be finite semilattices, and let
  $f\in \JM_1(\bL)$ and $g\in \JM_1(\bK)$.  Then let $f\boxtimes g$ be
  the mapping in $\JM_1(\bL\boxtimes \bK)$ defined by
  \[ ( f\boxtimes g ) \big( [x,y] \big) = [ f(x), g(y) ] \]
  for every $(x,y)\in L\times K$, where $[x,y]$ denotes the class of
  $(x,y)$ in $L\boxtimes K$.
\end{defn}

Since $f\in \JM_1(\bL)$ and $g\in \JM_1(\bK)$ the map $f\boxtimes g$
is clearly well-defined.  Note that for $f_1,f_2\in \JM_1(\bL)$ and
$g_1,g_2\in \JM(\bK)$ the rules $(f_1\boxtimes g_1)\vee (f_2\boxtimes
g_2) = (f_1\vee f_2)\boxtimes (g_1\vee g_2)$ and $(f_1\boxtimes
g_1)\circ (f_2\boxtimes g_2) = (f_1\circ f_2)\boxtimes (g_1\circ g_2)$
apply.

With $\Aut(\bK)$ we denote the set of the automorphisms of a
semilattice $\bK$.  We consider in particular the case where
$\bK=(K,\le)$ is the semilattice
$(K=\{1,\dots,n\}\,\cupdot\,\{\infty\}, \le)$ where $\le \ :=\id_K
\cup\, ({K\times \{\infty\}})$, for some $n\in\mathbb{N}$; that is,
different elements are comparable only if one equals $\infty$.  In
this case, $\Aut(\bK)$ consists of all bijective maps $f:L\to L$ such
that $f(\infty)=\infty$, and thus the group $(\Aut(\bK),\circ)$ is
isomorphic to the symmetric group $\bS(K\setminus\{\infty\})$.  Any
subgroup $(S,\circ)$ of $(\Aut(\bK),\circ)$ acts in this sense
faithfully on the set $K\setminus\{\infty\} = \{1,\dots,n\}$.

\begin{construction}\label{construction}
  Let $\bL=(L,\le)$ be a semilattice and let $\bK:=(K,\le)$ be the
  semilattice, where $K=\{1,\dots,n\}\,\cupdot\,\{\infty\}$, $n\in
  \mathbb{N}$ and $\le \ :=\id_K \cup\, ({K\times \{\infty\}})$.
  Further let $(S,\circ)$ be a subgroup of $(\Aut(\bK),\circ)$ with
  $f\vee g=k_1$ for every $f,g\in S$ with $f\ne g$, let
  $\bar{S}:=S\cup \{k_1\}$, and let $(R,\vee,\circ)$ be a subsemiring
  of $(\JM_1( {\bL\boxtimes \bK}),\vee,\circ)$ with
  \begin{align}
    &\forall \varphi\in R\ \exists f\in \JM_1(\bL)\ \exists g\in \bar{S}:\
    \varphi = f\boxtimes g \:, \label{condition_a}\\
    &\forall a\in L\!\setminus\!\{1_\bL\}\ \forall b\in L\ 
    \forall g\in \bar{S}:\ f_{a,b}\boxtimes g\in R \:, \label{condition_b}\\
    &\forall \varphi\in R\ \exists a\in L\!\setminus\!\{1_\bL\}\ 
    \exists b\in L\ \exists g\in \bar{S}:\ f_{a,b}\boxtimes g\le \varphi \:. 
    \label{condition_c}
  \end{align}
\end{construction}

If $|K|=2$ then $\bL\boxtimes \bK\cong\bL$ and $(R,\vee,\circ)$
corresponds to a subsemiring $(S,\vee,\circ)$ of
$(\JM_1(\bL),\vee,\circ)$, which fulfills (\ref{condition_f_ab}) and
(\ref{condition_f_ab_leq_f}).  If $\bL$ does not satisfy $(*)$ then
$(R,\vee,\circ)$ possesses also a finite irreducible idempotent
$R$-semimodule which does not satisfy $(*)$, namely $(L,\vee)$ (see
Proposition~\ref{lemma_L_is_irr_semimod}).

If $|L|=2$ then $\bL\boxtimes \bK\cong\bK$ and $(R,\vee,\circ)$
belongs to a class of finite simple semirings with absorbing greatest
element, which are also known.  These semirings have been presented in
the case of commutative semirings in \cite{bashir_2001} and for not
necessarily commutative semirings in \cite{monico}: Let $(G,\cdot)$ be
a finite group and define $V(G):=G \,\cupdot\, \{\infty\}$.  Extend
the multiplication of $G$ to $V(G)$ by the rule $x\infty=\infty
x=\infty$ for every $x\in V(G)$ and define the addition on $V(G)$ by
$x+x=x$ and $x+y=\infty$ for every $x,y\in V(G)$ with $x\ne y$.  Then
$(V(G),+,\cdot)$ is a finite simple additively idempotent semiring
with absorbing greatest element and $(V(G),+)$ is a finite irreducible
idempotent semimodule without property $(*)$ if $|G|>1$.

Construction~\ref{construction} is a combination of those two types of
semirings.  As shown in the next proposition, these semirings are also
simple.

%--

\begin{proposition}
  Let everything as in Construction~\ref{construction}. Then
  $(R,\vee,\circ)$ is a finite simple additively idempotent semiring
  with absorbing greatest element.
\end{proposition}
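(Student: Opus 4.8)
The plan is to carry out, for this more elaborate setting, the argument of Proposition~\ref{prop_f_ab_leq_f} (the simplicity criterion for subsemirings of $\JM_1(\bL)$), using conditions (\ref{condition_b}) and (\ref{condition_c}) in the roles there played by (\ref{condition_f_ab}) and (\ref{condition_f_ab_leq_f}), while dragging the $\bK$‑component through the computations and exploiting that $S$ is a \emph{group}. The routine part is quick: $(R,\vee,\circ)$ is a finite additively idempotent semiring as a subsemiring of $(\JM_1(\bL\boxtimes\bK),\vee,\circ)$; its greatest element is $\infty_R:=k_{1_\bL}\boxtimes k_{1_\bK}$, the map constant at $1_{\bL\boxtimes\bK}$, which lies in $R$ by (\ref{condition_b}) (take $b=1_\bL$, $g=k_{1_\bK}$, noting $f_{a,1_\bL}=k_{1_\bL}$) and is absorbing since $\infty_R\circ\varphi=\infty_R$ trivially and $\varphi\circ\infty_R=\infty_R$ because $\varphi\in\JM_1$ fixes $1_{\bL\boxtimes\bK}$. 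The one structural fact I would isolate first is: for $f\in\JM_1(\bL)$ and $g\in\bar S$ one has $f\boxtimes g=\infty_R$ iff $f=k_{1_\bL}$ or $g=k_{1_\bK}$ (if one factor is constant at a top element so is every coordinate of $f\boxtimes g$; conversely, if $f\ne k_{1_\bL}$ and $g\in S$ then $f\boxtimes g$ takes a value outside the top class). Equivalently, every $\varphi\in R\setminus\{\infty_R\}$ has a representation $\varphi=f\boxtimes g$ from (\ref{condition_a}) with $f\ne k_{1_\bL}$ and $g\in S$. Also $\id_\bK\in S$ (a subgroup contains the identity), so all ``test maps'' $f_{a,b}\boxtimes\id_\bK$ with $a\in L\setminus\{1_\bL\}$, $b\in L$ lie in $R$ by (\ref{condition_b}). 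Note $L\setminus\{1_\bL\}\ne\varnothing$, since otherwise (\ref{condition_c}) could not hold for the nonempty set $R$.

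For simplicity, let $\sim$ be a congruence with $\sim\,\ne\id_R$ and pick $\varphi\sim\psi$, $\varphi\ne\psi$. If $\varphi$ or $\psi$ equals $\infty_R$ we already have $\infty_R\sim\chi$ with $\chi\ne\infty_R$; so assume both differ from $\infty_R$ and write $\varphi=f\boxtimes g$, $\psi=f'\boxtimes g'$ with $f,f'\ne k_{1_\bL}$ and $g,g'\in S$. If $g\ne g'$, then $g\vee g'=k_{1_\bK}$ by the defining property of $S$, so $\varphi\vee\psi=(f\vee f')\boxtimes k_{1_\bK}=\infty_R$; since $\varphi\sim\varphi\vee\psi$, we get $\infty_R\sim\varphi$ with $\varphi\ne\infty_R$. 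If $g=g'$, then $f\ne f'$, so after possibly interchanging $\varphi$ and $\psi$ (which keeps $g=g'$ and both $\ne\infty_R$) there is $x_0$ with $f(x_0)\not\le f'(x_0)=:b$, forcing $b\ne 1_\bL$. Post‑composing $\varphi\sim\psi$ with $f_{b,d}\boxtimes\id_\bK$ for a fixed $d\in L\setminus\{1_\bL\}$ and then pre‑composing with $f_{z,x_0}\boxtimes\id_\bK$ for some $z\in L\setminus\{1_\bL\}$, and simplifying the resulting $f_{a,b}$‑composites exactly as in the $\JM_1(\bL)$ case, yields $\infty_R\sim f_{z,d}\boxtimes g$, with $f_{z,d}\boxtimes g\ne\infty_R$ since $d\ne 1_\bL$ and $g\in S$. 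Either way one obtains $\chi=f_0\boxtimes g_0\in R$ with $f_0\ne k_{1_\bL}$, $g_0\in S$ and $\chi\sim\infty_R$.

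The propagation step then spreads this relation over all of $R$. Pick $p$ with $c:=f_0(p)\ne 1_\bL$ and any $z_1\in L\setminus\{1_\bL\}$; pre‑composing $\chi\sim\infty_R$ with $f_{z_1,p}\boxtimes\id_\bK$ gives $\infty_R\sim f_{z_1,c}\boxtimes g_0$. For arbitrary $a\in L\setminus\{1_\bL\}$, $b\in L$, $g\in S$: pre‑composing with $f_{a,z_1}\boxtimes\id_\bK$ turns the first factor into $f_{a,c}$, and then post‑composing with $f_{c,b}\boxtimes(g\circ g_0^{-1})$ — which lies in $R$ by (\ref{condition_b}), as $c\ne 1_\bL$ and $g\circ g_0^{-1}\in S\subseteq\bar S$ — produces $f_{a,b}\boxtimes g$, while $\infty_R$ is fixed under all these compositions. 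Hence $\infty_R\sim f_{a,b}\boxtimes g$ for all $a\in L\setminus\{1_\bL\}$, $b\in L$, $g\in S$, and trivially also for $g=k_{1_\bK}$ (then $f_{a,b}\boxtimes k_{1_\bK}=\infty_R$). Finally, given any $\varphi\in R$, condition (\ref{condition_c}) supplies $a\in L\setminus\{1_\bL\}$, $b\in L$, $g\in\bar S$ with $f_{a,b}\boxtimes g\le\varphi$, so $\varphi=\varphi\vee(f_{a,b}\boxtimes g)\sim\varphi\vee\infty_R=\infty_R$. Thus every element of $R$ is $\sim$‑equivalent to $\infty_R$, so $\sim\,=R\times R$, proving simplicity.

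The individual composition identities for the maps $f_{a,b}$ and their $\boxtimes$‑products are the same routine computations already used for Proposition~\ref{prop_semiring_with_absorbing_greatest} and \cite[Theorem~2.2]{jezek}; the genuinely new point — and where the care goes — is controlling the second ($\bK$‑)factor through every composition, and it is exactly the group structure of $S$, via the right translation $g_0\mapsto g\circ g_0^{-1}$, that lets a single obtained $g_0$ be moved to any prescribed $g\in S$. The degenerate products $f\boxtimes k_{1_\bK}=k_{1_\bL}\boxtimes g=\infty_R$ have to be kept in view throughout, but they are absorbed by the structural fact isolated at the outset.
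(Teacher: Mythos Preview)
Your proof is correct. The route differs in organization from the paper's, though the underlying mechanics (pre- and post-composition by elements $f_{a,b}\boxtimes g$, use of the group inverse in $S$) are the same. The paper fixes an arbitrary $\lambda\ne\infty_R$ at the outset and shows $\lambda\sim\infty_R$ directly, via a four-way case split according to whether $\gamma=\infty_R$, $\varphi_1=\gamma_1$, $\varphi_1\nleq\gamma_1$, or $\varphi_1\ngeq\gamma_1$; the $\bK$-component of $\lambda$ is matched in each case by choosing $v\in S$ with $\varphi_2\circ v=\lambda_2$ (or $\gamma_2\circ w=\lambda_2$). You instead first reduce to a single relation $\infty_R\sim\chi$ using only a two-way split on whether the $\bK$-components of $\varphi$ and $\psi$ agree (exploiting $g\vee g'=k_{1_\bK}$ when they differ), and then run a separate propagation step that carries this one relation to all $f_{a,b}\boxtimes g$ before invoking~(\ref{condition_c}). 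Your decomposition is more modular and needs fewer cases; the paper's version avoids the intermediate propagation step at the cost of a longer case analysis. Both arguments lean on exactly the same ingredients, and neither is materially simpler than the other.
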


\begin{proof}
  Clearly, $(R,\vee,\circ)$ is a finite additively idempotent
  semiring.  Its greatest element is $k_{1_{\bL\boxtimes\bK}}$, which
  is obviously absorbing.  Let $\sim$ be a congruence on
  $(R,\vee,\circ)$ with $\sim \:\ne \id_R$, i.e., there exist
  $\varphi,\gamma\in R$ with $\varphi\ne \gamma$ and $\varphi\sim
  \gamma$.  By (\ref{condition_a}) there exist $\varphi_1,\gamma_1\in
  \JM_1(\bL)$, $\varphi_2,\gamma_2\in \bar{S}$ with $\varphi =
  \varphi_1\boxtimes \varphi_2$ and
  $\gamma=\gamma_1\boxtimes\gamma_2$.  Without loss of generality we
  can assume $\varphi\ngeq\gamma$.  It follows that $\varphi\ne
  k_{1_{\bL\boxtimes\bK}}$.  Choose $\lambda\in
  R\setminus\{k_{1_{\bL\boxtimes\bK}}\}$ arbitrarily.  We will show
  that $\lambda\sim k_{1_{\bL\boxtimes\bK}}$ holds.  From this it
  follows that $\sim \:= R\times R$ and therefore the simplicity.

  Again there exist $\lambda_1\in\JM_1(\bL)$, $\lambda_2\in \bar{S}$
  with $\lambda=\lambda_1\boxtimes \lambda_2$.  By (\ref{condition_c})
  there exists $a\in L\setminus\{1_\bL\}$, $b\in L$ and $g\in\bar{S}$
  such that $f_{a,b}\boxtimes g\le \lambda_1\boxtimes\lambda_2$.  We
  have $\lambda_2\ne k_{1_{\bK}}$ and thus $\lambda_2(y)\ne 1_{\bK}$
  for some $y\in K$.  For all $x\in L$ it follows $[f_{a,b}(x), g(y)]
  \le [\lambda_1(x), \lambda_2(y)]$, so that $f_{a,b}(x)\le
  \lambda_1(x)$; hence $f_{a,b}\le \lambda_1$.  Because of $\varphi\ne
  k_{1_{\bL\boxtimes\bK}}$ it holds that $\varphi_1\ne k_{1_\bL}$ and
  thus there exists $x\in L$ with $c:=\varphi_1(x)\ne 1_\bL$. It
  follows that $f_{c,b}\circ \varphi_1\circ f_{a,x}\vee \lambda_1=
  f_{a,b}\vee \lambda_1 = \lambda_1$.  It also must hold that
  $\varphi_2,\lambda_2\ne k_{1_\bK}$, i.e., $\varphi_2,\lambda_2\in
  S$. Since $(S,\circ)$ is a group there exists $v\in S$ with
  $\varphi_2\circ v=\lambda_2$.  We make a distinction of cases.

  Case 1: $\gamma = k_{1_{\bL\boxtimes\bK}}$.  It holds that
  $(f_{c,b}\boxtimes \id_K)\circ (\varphi_1\boxtimes \varphi_2) \circ
  (f_{a,x}\boxtimes v)\vee (\lambda_1\boxtimes \lambda_2) =
  (f_{c,b}\circ \varphi_1\circ f_{a,x} \vee \lambda_1)\boxtimes
  (\id_K\circ\varphi_2\circ v \vee \lambda_2) = (\lambda_1\boxtimes
  \lambda_2)=\lambda$ and $(f_{c,b}\boxtimes \id_K)\circ
  (\gamma_1\boxtimes \gamma_2) \circ (f_{a,x}\boxtimes v)\vee
  (\lambda_1\boxtimes \lambda_2) = k_{1_{\bL\boxtimes\bK}} \vee
  (\lambda_1\boxtimes \lambda_2) = k_{1_{\bL\boxtimes\bK}}$ and
  because of $\varphi\sim \gamma$ it follows that $\lambda\sim
  k_{1_{\bL\boxtimes\bK}}$.

  Case 2: $\gamma\ne k_{1_{\bL\boxtimes\bK}}$ and $\varphi_1 =
  \gamma_1$.  It must hold that $\varphi_2\ne\gamma_2$ and it follows
  that $\lambda_2=\varphi_2\circ v\ne \gamma_2\circ v$, i.e.,
  $\lambda_2\vee \gamma_2\circ v = k_{1_\bK}$.  As in the previous
  case one can show the equality $(f_{c,b}\boxtimes \id_K)\circ
  (\varphi_1\boxtimes \varphi_2) \circ (f_{a,x}\boxtimes v)\vee
  (\lambda_1\boxtimes \lambda_2)=\lambda$.  Additionally it holds in
  this case that $(f_{c,b}\boxtimes \id_K)\circ (\gamma_1\boxtimes
  \gamma_2) \circ (f_{a,x}\boxtimes v)\vee (\lambda_1\boxtimes
  \lambda_2) = (f_{c,b}\circ \gamma_1 \circ f_{a,x}\vee
  \lambda_1)\boxtimes (\id_K\circ\gamma_2\circ v\vee
  \lambda_2)=(f_{c,b}\circ \gamma_1 \circ f_{a,x}\vee
  \lambda_1)\boxtimes k_{1_\bK}=k_{1_{\bL\boxtimes\bK}}$ and we find
  again that $\lambda\sim k_{1_{\bL\boxtimes\bK}}$.

  Case 3: $\gamma\ne k_{1_{\bL\boxtimes\bK}}$ and $\varphi_1 \nleq
  \gamma_1$.  There exists $y\in L$ with $\varphi_1(y)\nleq
  \gamma_1(y)=:d$.  It also holds that $\gamma_2\ne k_{1_\bK}$, i.e.,
  $\gamma_2\in S$.  Consequently there exists $w\in S$ with
  $\gamma_2\circ w= \lambda_2$.  It follows that $(f_{d,b}\boxtimes
  \id_K)\circ (\varphi_1\boxtimes \varphi_2)\circ( f_{a,y}\boxtimes
  w)\vee (\lambda_1\boxtimes \lambda_2)=(f_{d,b}\circ \varphi_1\circ
  f_{a,y} \vee \lambda_1)\boxtimes (\id_K\circ \varphi_2\circ w \vee
  \lambda_2)=(k_{1_\bL}\vee \lambda_1)\boxtimes (\varphi_2\circ w \vee
  \lambda_2) = k_{1_{\bL\boxtimes\bK}}$.  Further it holds that
  $(f_{d,b}\boxtimes \id_K)\circ (\gamma_1\boxtimes \gamma_2)\circ(
  f_{a,y}\boxtimes w)\vee (\lambda_1\boxtimes \lambda_2)=(f_{d,b}\circ
  \gamma_1\circ f_{a,y} \vee \lambda_1)\boxtimes (\id_K\circ
  \gamma_2\circ w \vee \lambda_2)=(f_{a,b}\vee \lambda_1)\boxtimes
  (\lambda_2\vee \lambda_2) =\lambda$ and it holds again that
  $\lambda\sim k_{1_{\bL\boxtimes\bK}}$.

  Case 4: $\gamma\ne k_{1_{\bL\boxtimes\bK}}$ and $\varphi_1 \ngeq
  \gamma_1$. In this case there exists $z\in L$ with
  $e:=\varphi_1(z)\ngeq \gamma_1(z)$.  Analogously to the previous
  case one can show that $(f_{e,b}\boxtimes \id_K)\circ
  (\varphi_1\boxtimes \varphi_2)\circ( f_{a,z}\boxtimes v)\vee
  (\lambda_1\boxtimes \lambda_2) =\lambda$ and $(f_{e,b}\boxtimes
  \id_K)\circ (\gamma_1\boxtimes \gamma_2)\circ( f_{a,z}\boxtimes
  v)\vee (\lambda_1\boxtimes \lambda_2)= k_{1_{\bL\boxtimes\bK}}$
  holds and we find again $\lambda\sim k_{1_{\bL\boxtimes\bK}}$.
\end{proof}

\begin{proposition}
  Let everything as in Construction~\ref{construction}.  Additionally,
  let $|S|=n$, and let $n>1$ or let $\bL$ without property $(*)$.
  Then ${(L\boxtimes K,\vee)}$ is a finite idempotent irreducible
  $R$-semimodule without property $(*)$.
\end{proposition}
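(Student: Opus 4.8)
The plan is to verify, one property at a time, that $M:=L\boxtimes K$ (with greatest element $\infty_M=A$) is a finite idempotent $R$-semimodule that is non-quasitrivial, sub-irreducible and quotient-irreducible, and that fails property $(*)$. That $(M,\vee)$ is a finite idempotent $R$-semimodule is immediate: $R\subseteq\JM_1(\bL\boxtimes\bK)=\JM_1(M,\le)$ acts on $M$ by evaluation, each $\varphi\in R$ fixes $A$, and $\bL\boxtimes\bK$ is a finite semilattice. Throughout I will use the explicit description $M=\{A\}\cup\{[c,j]\mid c\in L\setminus\{1_\bL\},\,j\in\{1,\dots,n\}\}$, the identities $[x,\infty]=[1_\bL,y]=A$, and the fact that two singleton classes $[c,j],[d,k]$ are comparable only when $j=k$; note also that $L\setminus\{1_\bL\}\ne\varnothing$ by applying~(\ref{condition_c}) to any element of $R$, so singletons exist.

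The crucial preliminary step, and the place where the hypothesis $|S|=n$ is used, is to show that $S$ acts \emph{regularly} on $\{1,\dots,n\}$. The condition $f\vee g=k_1$ for distinct $f,g\in S$ says precisely that $f(i)\ne g(i)$ for every $i\in\{1,\dots,n\}$, i.e.\ that the (faithful) action of $S$ is free; since each orbit of a free action has size $|S|=n$ and the acted-on set has $n$ elements, the action is transitive, hence regular. Using this together with~(\ref{condition_b}) I obtain the density statement $Ra=M$ for every singleton $a=[c,j]$: for a target $[b,k]$ with $b\ne 1_\bL$ choose $g\in S$ with $g(j)=k$ and apply $f_{c,b}\boxtimes g\in R$, which maps $[c,j]$ to $[b,k]$; and $A$ is the image of $[c,j]$ under $f_{c,1_\bL}\boxtimes k_1=k_A\in R$. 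Non-quasitriviality is now clear (the maps $f_{c,c}\boxtimes\id_K$ and $k_A$ of $R$ send $[c,j]$ to $[c,j]$ and to $A$ respectively), and sub-irreducibility follows at once: a subsemimodule containing a singleton $a$ contains $Ra=M$, so the only proper subsemimodule is $\{A\}$, which is id-quasitrivial because every $\varphi\in R$ fixes $A$.

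For quotient-irreducibility I take a semimodule congruence $\sim\,\ne\id_M$ and pick $p\ne q$ with $p\sim q$; adding $p$, resp.\ $q$, gives $p\sim p\vee q\sim q$, and since $p\ne q$ one of $p\vee q\ne p$, $p\vee q\ne q$ holds, producing a pair $a<b$ with $a\sim b$. If $b=A$, then $a$ is a singleton with $a\sim A$, and since $Ra=M$ every $m\in M$ can be written $m=\psi(a)$ with $\psi\in R$, so $m=\psi(a)\sim\psi(A)=A$; hence all elements are $\sim$-related and $\sim\,=M\times M$. If $b\ne A$, then $a=[c,j]$ and $b=[d,j]$ with $c<d$ (so $d\not\le c$); applying $f_{c,c}\boxtimes\id_K\in R$ fixes $a$ and sends $b$ to $[1_\bL,j]=A$, so $a\sim A$ and we are back in the previous case. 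Finally, $(M,\vee)$ fails $(*)$: given $u\in M$ I exhibit $x\in M\setminus\{A\}$ with $u\vee x=A$. If $u=A$ any singleton works; if $u=[c,j]$ then either $n>1$, in which case $x:=[c,j']$ for some $j'\ne j$ gives $u\vee x=[c,\infty]=A$, or $\bL$ itself fails $(*)$, in which case there is $x_0\in L\setminus\{1_\bL\}$ with $c\vee x_0=1_\bL$ and $x:=[x_0,j]$ gives $u\vee x=[1_\bL,j]=A$.

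I expect the genuinely delicate point to be the regular-action step: recognising that ``$f\vee g=k_1$ for distinct $f,g\in S$'' is exactly freeness of the action, and then invoking $|S|=n$ to upgrade freeness to transitivity. Once this is established, the density identity $Ra=M$ is a one-line consequence of~(\ref{condition_b}), and all of sub-irreducibility, quotient-irreducibility and the failure of $(*)$ are routine. A harmless auxiliary check, as noted, is that $L$ has more than one element, which follows from~(\ref{condition_c}).
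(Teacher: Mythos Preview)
Your proof is correct and follows essentially the same route as the paper: both establish the regular action of $S$ from freeness plus $|S|=n$, derive the density identity $R[c,j]=L\boxtimes K$ via condition~(\ref{condition_b}), and deduce sub- and quotient-irreducibility from it. Your quotient-irreducibility argument is mildly streamlined—by first passing to a comparable pair $a<b$ you reduce the case analysis to two cases, whereas the paper treats the cases $[a,b]=A$, $[c,d]=A$, $a=c$, $a\not\ge c$, $a\not\le c$ separately—but the underlying mechanism (hit the pair with a suitable $f_{c,c}\boxtimes g$ to collapse one end to $A$, then use density) is identical.
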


\begin{proof}
  It is easy to see that $(L\boxtimes K,\vee)$ is a finite idempotent
  $R$-semimodule without property $(*)$.  Further it is an
  $R$-nonidentity semimodule and it fulfills $|R\,(L\boxtimes K)|>1$.

  Considering the action of the group $(S,\circ)$ on the set
  $K\setminus\{\infty\} = \{1,\dots,n\}$ it follows from the
  conditions in Construction~\ref{construction} that for every
  $x\in\{1,\dots,n\}$ the orbit map $S\to \{1,\dots,n\}$, $g\mapsto
  g(x)$ is injective; now, since $|S|=n$, this map is even bijective.

  Let $(M,\vee)$ be an $R$-subsemimodule of $(L\boxtimes K,\vee)$ with
  $|M|>1$, i.e., there exist $[a,b]\in M$ with $[a,b]\ne A$.  Hence
  $a\ne 1_\bL$ and $b\ne 1_\bK$.  Choose $[c,d]\in L\boxtimes K$
  arbitrarily.  Since the orbit map $g\mapsto g(b)$ is bijective it
  follows that there exits $g\in S$ with $g(b)=d$.  It follows that
  $(f_{a,c}\boxtimes g)([a,b])=[c,d]$.  Thus, $M = R\,[a,b] =
  L\boxtimes K$ and $(L\boxtimes K,\vee)$ is consequently
  sub-irreducible.

  Let $\sim$ be a semimodule congruence on $(L\boxtimes K,\vee)$ with
  $\sim \:\ne \id$, i.e., there exist $[a,b],[c,d]\in L\boxtimes K$
  with $[a,b]\sim[c,d]$ and $[a,b]\ne[c,d]$.  Let $e\in L$, $f\in K$.
  We will show that $[e,f]\sim A $ holds.  From this it follows
  that $\sim \ = L\boxtimes K \,\times\, L\boxtimes K$, i.e.,
  $(L\boxtimes K,\vee)$ is quotient-irreducible.

  If $[a,b] = A$ then $[c,d]\ne A$, i.e., $d\ne 1_\bK$.  Hence, there
  exists $g\in S$ with $g(d)=f$ and it follows that $A =
  (f_{c,e}\boxtimes g)(A) \sim (f_{c,e}\boxtimes g)([c,d]) = [e,f]$.
  The case $[c,d]=A$ works analogously.  So from now on we can
  consider the case that $[a,b],[c,d]\ne A$.  If $a=c$ then it holds
  that $b\ne d$ and it follows that $[a,b]=[a,b]\vee [a,b]\sim
  [c,d]\vee [a,b]=[a,1_\bK]=A$.  We find $A\sim [e,f]$ as before.  Now
  consider the case $a\ngeq c$.  There exists $h\in S$ with $h(b)=f$
  and it follows that $[e,f] = (f_{a,e}\boxtimes h)([a,b]) \sim
  (f_{a,e}\boxtimes h)([c,d])=[1_\bL,h(d)]=A$.  The case $a\nleq c$
  works analogously. Hence $(L\boxtimes K,\vee)$ is irreducible.
\end{proof}

\begin{corollary}\label{corollary_conjecture}
  Let everything as in Construction~\ref{construction}.  Additionally,
  let ${|S|=n}$, and let $n>1$ or let $\bL$ be without property $(*)$.
  Then $(R,\vee,\circ)$ is a finite simple additively idempotent
  semiring with absorbing greatest element, which possesses a finite
  idempotent irreducible semimodule without property $(*)$.
\end{corollary}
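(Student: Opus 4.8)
The plan is to read this statement directly off the two propositions established immediately above. The first of them says that, for any data as in Construction~\ref{construction}, the structure $(R,\vee,\circ)$ is a finite simple additively idempotent semiring whose greatest element $k_{1_{\bL\boxtimes\bK}}$ is absorbing. The second says that, under the extra hypotheses that $|S|=n$ and that either $n>1$ or $\bL$ fails property $(*)$, the semimodule $(L\boxtimes K,\vee)$ is a finite idempotent irreducible $R$-semimodule without property $(*)$. Since the hypotheses of the corollary are exactly those of the two propositions taken together, I would simply invoke each in turn: the first yields the assertion about the semiring $(R,\vee,\circ)$, and the second produces the required idempotent irreducible $R$-semimodule lacking $(*)$, namely $(L\boxtimes K,\vee)$. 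Thus the proof is essentially one line: it follows from the two preceding propositions.

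Accordingly there is no genuine obstacle at this point; all the substance resides in the cited propositions. For orientation, the delicate part there is the simplicity of $(R,\vee,\circ)$, proved by the four-case argument showing that a nontrivial congruence must identify every $\lambda\ne k_{1_{\bL\boxtimes\bK}}$ with $k_{1_{\bL\boxtimes\bK}}$, where conditions (\ref{condition_a})--(\ref{condition_c}) are used to build suitable pre- and post-compositions of $\varphi$ and $\gamma$ that separate the two sides of the congruence. For the semimodule the key point is that when $|S|=n$ the orbit map $g\mapsto g(x)$ from $S$ to $\{1,\dots,n\}$ is a bijection, which drives both sub-irreducibility (any cyclic submodule $R\,[a,b]$ with $[a,b]\ne A$ is all of $L\boxtimes K$) and quotient-irreducibility of $(L\boxtimes K,\vee)$. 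Both of these arguments are already carried out above, so nothing further is needed to conclude the corollary.
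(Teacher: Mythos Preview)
Your proposal is correct and matches the paper's approach exactly: the corollary is stated without proof in the paper, as it is an immediate consequence of the two propositions preceding it, precisely as you describe.
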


We computed all finite simple additively idempotent semirings with
cardinality at most $10$.  From these semirings, every finite simple
additively idempotent semirings with absorbing greatest element, which
possesses a finite idempotent irreducible semimodule not satisfying
$(*)$, is isomorphic to a semiring in
Corollary~\ref{corollary_conjecture}.  For this reason we have the
following conjecture.

\begin{conjecture}
  Let $(R,\vee,\circ)$ be a finite simple additively idempotent
  semiring with absorbing greatest element, which possesses a finite
  idempotent irreducible semimodule without property $(*)$.  Then
  $(R,\vee,\circ)$ is isomorphic to a semiring in
  Corollary~\ref{corollary_conjecture}.
\end{conjecture}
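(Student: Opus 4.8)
The plan would be to follow the pattern of the characterization theorems of Section~\ref{chapter_main_results}, but now arriving at a semiring as in Construction~\ref{construction} rather than at a subsemiring of $\JM_1$ of a single semilattice. Let $(M,+)$ be a finite idempotent irreducible $R$-semimodule without property~$(*)$. Since $\infty_R$ is absorbing, Proposition~\ref{prop_greatest_element}-\ref{infty_2}.\ gives $R\infty_M=\{\infty_M\}$, and $(M,+)$ cannot have a neutral element, as otherwise $(*)$ would hold with $u=0_M$. Hence $(R,+,\cdot)$ is isomorphic to a subsemiring of $(\JM_1(M,\le),+,\circ)$ by Remark~\ref{remark_embed}, and the work splits into (a)~identifying $(M,\le)$ with a product $\bL\boxtimes\bK$, and (b)~verifying that the image of $R$ fulfills conditions (\ref{condition_a})--(\ref{condition_c}).

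For (a) I would partition $M\setminus\{\infty_M\}$ into \emph{pieces}, the natural candidate being the connected components of the reflexive symmetric relation ``$x+y\ne\infty_M$'' (equivalently, of its transitive hull on $M\setminus\{\infty_M\}$); let $n$ be their number. One then has to prove the structural facts that pin $M$ down as $\bL\boxtimes\bK_n$, with $\bK_n$ the $n$-atom semilattice of Construction~\ref{construction} and $\bL$ a piece with a top element adjoined: that the join of two elements from distinct pieces equals $\infty_M$ and that distinct pieces are incomparable below $\infty_M$; that all $n$ pieces are order-isomorphic; and that every $r\in R$ either collapses a given piece into $\{\infty_M\}$ or maps it isomorphically onto another piece, with an $\bL$-part of the action that does not depend on the piece. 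By the identity $(f_1\boxtimes g_1)\vee(f_2\boxtimes g_2)=(f_1\vee f_2)\boxtimes(g_1\vee g_2)$ this last fact shows that the $\bK$-parts of the elements of $R$ form a subset $\bar S$ of $\JM(\bK_n)$, and one checks that $S:=\bar S\setminus\{k_1\}$ is a subgroup of $\Aut(\bK_n)\cong\bS(\{1,\dots,n\})$, acting freely (equivalently, any two distinct elements of $S$ have join $k_1$, which follows from $\bar S$ being closed under $\vee$) and transitively (a proper union of $S$-orbits of pieces together with $\{\infty_M\}$ would be a proper $R$-subsemimodule strictly larger than $\{\infty_M\}$, impossible by Corollary~\ref{corollary_id-quasitrivial} since $M$ has no neutral element). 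Thus $|S|=n$, condition (\ref{condition_a}) holds, and $\bL$ fails $(*)$ whenever $n=1$.

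For (b) I would obtain (\ref{condition_b}) and (\ref{condition_c}) exactly as (\ref{condition_f_ab}) and (\ref{condition_f_ab_leq_f}) are obtained in Proposition~\ref{prop_irr_semimod_with_join-irr_greatest} and Proposition~\ref{prop_f_ab_leq_f}: the density of the $R$-action on each piece (from Propositions~\ref{prop_uv},~\ref{prop_r_a0},~\ref{prop_r_au}) together with the regular $S$-action produces, for all $a\in L\setminus\{1_\bL\}$, $b\in L$ and $g\in\bar S$, an element of $R$ acting as $f_{a,b}\boxtimes g$, which is (\ref{condition_b}); and a congruence argument on $(R,\vee,\circ)$ of the type used in Lemma~\ref{lemma_e_a1_leq_f} and Proposition~\ref{prop_f_ab_leq_f}, exploiting simplicity, gives (\ref{condition_c}). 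Together with Corollary~\ref{corollary_conjecture}, which is the converse, this would complete the proof.

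The hard part is step (a): showing that an arbitrary finite idempotent irreducible $R$-semimodule without property~$(*)$ must have the rigid shape $\bL\boxtimes\bK_n$ with a regular group acting — in particular, that all pieces are order-isomorphic and that the $\bL$-part of the action is uniform across pieces. The obstruction is that ``$x+y\ne\infty_M$'' is neither transitive nor compatible with $+$, so the partition into pieces is not the kernel of a semimodule congruence (indeed $M$ is quotient-irreducible), and the leverage exploited in the cases of Theorems~\ref{main_right_but_not_left}--\ref{main_absorbing}, namely the availability (from $(*)$ via Proposition~\ref{prop_r_au}, or from a zero via Proposition~\ref{prop_r_a0}) of the specific maps $r_{a,b}$, is precisely what is missing here. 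This is why the statement remains a conjecture, supported for now only by the exhaustive computation up to cardinality~$10$ mentioned above.
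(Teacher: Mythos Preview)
The paper does not prove this statement at all: it is stated as a \emph{conjecture}, and the only evidence offered is the exhaustive computation of all finite simple additively idempotent semirings of cardinality at most~$10$. There is no proof in the paper to compare against.

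Your proposal is therefore not competing with any argument in the paper; rather, you have sketched a plausible strategy and correctly identified its gap. Your final paragraph is accurate: the decisive missing ingredient is precisely step~(a), establishing that $(M,\le)$ must have the form $\bL\boxtimes\bK_n$ with a regular $S$-action, and you are right that the density tools of Propositions~\ref{prop_r_a0} and~\ref{prop_r_au} are unavailable exactly because $(*)$ fails. So your assessment agrees with the paper's: the statement is open.

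One small caution about your outline: even granting the decomposition $M\cong\bL\boxtimes\bK_n$, your claim that ``every $r\in R$ either collapses a given piece into $\{\infty_M\}$ or maps it isomorphically onto another piece, with an $\bL$-part of the action that does not depend on the piece'' is itself a substantial assertion that would need its own argument; it does not follow automatically from the combinatorics of the pieces, and it is essentially condition~(\ref{condition_a}) in disguise. So step~(a) and the first part of step~(b) are more entangled than your outline suggests.
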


%------------------------------------------

\section{Examples}\label{chapter_example}

\subsection{$\infty_R$ is right but not left absorbing}

Let $\bL=(\{0,1,2\},\leq)$ be the total order with $3$ elements. The
following semiring, consisting of the mappings $a,b,c\in \Res_1(\bL)$,
is the unique finite simple additively idempotent semiring with right
but not left absorbing greatest element, induced by $\bL$:

\begin{displaymath}
  \begin{array}{c|ccc}
    x & 0 & 1 & 2 \\
    \hline	
    a(x) & 0 & 0 & 2 \\
    b(x) & 0 & 1 & 2 \\
    c(x) & 0 & 2 & 2
  \end{array}\qquad
  \begin{array}{c|ccc}
    \vee & a & b & c \\
    \hline
    a &  a & b & c \\
    b &  b & b & c \\
    c &  c & c & c
  \end{array}\qquad
  \begin{array}{c|ccc}
    \circ & a & b & c \\
    \hline
    a &  a & a & c \\
    b &  a & b & c \\
    c &  a & c & c
  \end{array}
\end{displaymath}

The following semirings are all finite simple additively idempotent
semirings with right but not left absorbing greatest element, induced
by $(\{0,1,2,3\},\leq)$:
\begin{gather*}
  \begin{array}{c}
    R_{7,1} \smallskip\\
    \begin{array}{c|cccc}
      x & 0 & 1 & 2 & 3 \\
      \hline	
      a(x) & 0 & 0 & 0 & 3 \\
      b(x) & 0 & 0 & 1 & 3 \\
      c(x) & 0 & 0 & 2 & 3 \\
      d(x) & 0 & 0 & 3 & 3 \\
      e(x) & 0 & 1 & 3 & 3 \\
      f(x) & 0 & 2 & 3 & 3 \\
      g(x) & 0 & 3 & 3 & 3
    \end{array} \\
  \end{array}
  \qquad
  \begin{array}{c}
    R_{7,2} \smallskip\\
    \begin{array}{c|cccc}
      x & 0 & 1 & 2 & 3 \\
      \hline	
      a(x) & 0 & 0 & 0 & 3 \\
      b(x) & 0 & 0 & 3 & 3 \\
      c(x) & 0 & 1 & 1 & 3 \\
      d(x) & 0 & 1 & 3 & 3 \\
      e(x) & 0 & 2 & 2 & 3 \\
      f(x) & 0 & 2 & 3 & 3 \\
      g(x) & 0 & 3 & 3 & 3
    \end{array} \\
  \end{array}\\
  \begin{array}{c}
    \\
    R_{8,1} \smallskip \\
    \begin{array}{c|cccc}
      x & 0 & 1 & 2 & 3 \\
      \hline	
      a(x) & 0 & 0 & 0 & 3 \\
      b(x) & 0 & 0 & 1 & 3 \\
      c(x) & 0 & 0 & 2 & 3 \\
      d(x) & 0 & 0 & 3 & 3 \\
      e(x) & 0 & 1 & 2 & 3 \\
      f(x) & 0 & 1 & 3 & 3 \\
      g(x) & 0 & 2 & 3 & 3 \\
      h(x) & 0 & 3 & 3 & 3
    \end{array}\\
    \\
  \end{array}
  \qquad
  \begin{array}{c}
    \\
    R_{8,2} \smallskip \\
    \begin{array}{c|cccc}
      x & 0 & 1 & 2 & 3 \\
      \hline	
      a(x) & 0 & 0 & 0 & 3 \\
      b(x) & 0 & 0 & 3 & 3 \\
      c(x) & 0 & 1 & 1 & 3 \\
      d(x) & 0 & 1 & 2 & 3 \\
      e(x) & 0 & 1 & 3 & 3 \\
      f(x) & 0 & 2 & 2 & 3 \\
      g(x) & 0 & 2 & 3 & 3 \\
      h(x) & 0 & 3 & 3 & 3
    \end{array}\\
    \\
  \end{array}
  \qquad
  \begin{array}{c}
    R_{10} \smallskip \\
    \begin{array}{c|cccc}
      x & 0 & 1 & 2 & 3 \\
      \hline	
      a(x) & 0 & 0 & 0 & 3 \\
      b(x) & 0 & 0 & 1 & 3 \\
      c(x) & 0 & 0 & 2 & 3 \\
      d(x) & 0 & 0 & 3 & 3 \\
      e(x) & 0 & 1 & 1 & 3 \\
      f(x) & 0 & 1 & 2 & 3 \\
      g(x) & 0 & 1 & 3 & 3 \\
      h(x) & 0 & 2 & 2 & 3 \\
      i(x) & 0 & 2 & 3 & 3 \\
      j(x) & 0 & 3 & 3 & 3
    \end{array}
  \end{array}
\end{gather*}
For space reasons we just show the addition and multiplication table
for the semiring $(R_{7,1},\vee,\circ)$:
\begin{displaymath}
  \begin{array}{c|ccccccc}
    \vee & a & b & c & d & e & f & g \\
    \hline
    a &  a & b & c & d & e & f & g \\
    b &  b & b & c & d & e & f & g \\
    c &  c & c & c & d & e & f & g \\
    d &  d & d & d & d & e & f & g \\
    e &  e & e & e & e & e & f & g \\
    f &  f & f & f & f & f & f & g \\
    g &  g & g & g & g & g & g & g
  \end{array}
  \qquad
  \begin{array}{c|ccccccc}
    \circ & a & b & c & d & e & f & g \\
    \hline
    a &  a & a & a & d & d & d & g \\
    b &  a & a & b & d & d & e & g \\
    c &  a & a & c & d & d & f & g \\
    d &  a & a & d & d & d & g & g \\
    e &  a & b & d & d & e & g & g \\
    f &  a & c & d & d & f & g & g \\
    g &  a & d & d & d & g & g & g
  \end{array}
\end{displaymath}%\smallskip

\subsection{$\infty_R$ is left but not right absorbing}

The following semiring is the unique finite simple additively
idempotent semiring with left but not right absorbing greatest
element, induced by ${(\{0,1\},\leq)}$:

\begin{displaymath}
  \begin{array}{c|cc}
    x &  0 & 1 \\
    \hline	
    a(x) & 0 & 0 \\
    b(x) & 0 & 1 \\
    c(x) & 1 & 1
  \end{array}\qquad
  \begin{array}{c|ccc}
    \vee & a & b & c \\
    \hline
    a &  a & b & c \\
    b &  b & b & c \\
    c &  c & c & c
  \end{array}\qquad
  \begin{array}{c|ccc}
    \circ & a & b & c \\
    \hline
    a &  a & a & a \\
    b &  a & b & c \\
    c &  c & c & c
  \end{array}
\end{displaymath}%\smallskip

\subsection{$\infty_R$ is absorbing}

The following semirings are all finite simple additively idempotent
semirings with absorbing greatest element, induced by
${(\{0,1,2\},\leq)}$:
\begin{displaymath}
  \begin{array}{c|ccc}
    x &  0 & 1 & 2 \\
    \hline	
    a(x) & 0 & 0 & 2 \\
    b(x) & 0 & 2 & 2 \\
    c(x) & 1 & 1 & 2 \\
    e(x) & 1 & 2 & 2 \\
    f(x) & 2 & 2 & 2
  \end{array}
  \qquad
  \begin{array}{c|ccccc}
    \vee  & a & b & c & d & e \\
    \hline
    a &  a & b & c & d & e \\
    b &  b & b & d & d & e \\
    c &  c & d & c & d & e \\
    d &  d & d & d & d & e \\
    e &  e & e & e & e & e \\
  \end{array}
  \qquad
  \begin{array}{c|ccccc}
    \circ  & a & b & c & d & e \\
    \hline
    a &  a & b & a & b & e \\
    b &  a & b & e & e & e \\
    c &  c & d & c & d & e \\
    d &  c & d & e & e & e \\
    e &  e & e & e & e & e \\
  \end{array}
\end{displaymath}

\begin{displaymath}
  \begin{array}{c|ccc}
    x &  0 & 1 & 2 \\
    \hline	
    a(x) & 0 & 0 & 2 \\
    b(x) & 0 & 1 & 2 \\
    c(x) & 0 & 2 & 2 \\
    d(x) & 1 & 1 & 2 \\
    e(x) & 1 & 2 & 2 \\
    f(x) & 2 & 2 & 2
  \end{array}
  \qquad
  \begin{array}{c|cccccc}
    \vee  & a & b & c & d & e & f \\
    \hline
    a &  a & b & c & d & e & f \\
    b &  b & b & c & d & e & f \\
    c &  c & c & c & e & e & f \\
    d &  d & d & e & d & e & f \\
    e &  e & e & e & e & e & f \\
    f &  f & f & f & f & f & f
  \end{array}
  \qquad
  \begin{array}{c|cccccc}
    \circ  & a & b & c & d & e & f \\
    \hline
    a &  a & a & c & a & c & f \\
    b &  a & b & c & d & e & f \\
    c &  a & c & c & f & f & f \\
    d &  d & d & e & d & e & f \\
    e &  d & e & e & f & f & f \\
    f &  f & f & f & f & f & f
  \end{array}
\end{displaymath}%\smallskip

%------------------------------------------

\section*{Acknowledgement}

The authors would like to thank the anonymous referee for many
valuable suggestions which have increased the quality of this work
considerably.

%------------------------------------------

\end{document}